\numberwithin{equation}{section}
\newcommand{\picdis}[1]{#1}
\newcommand{\blue}[1]{{{#1}}} 
\newtheorem{theorem}{{\bf Theorem}}[section]
\newtheorem{coro}[theorem]{{\bf Corollary}}
\newtheorem{lemma}[theorem]{{\bf Lemma}}
\newtheorem{prop}{{\bf Proposition}}[section]
\renewcommand{\leq}{\leqslant}
 \renewcommand{\geq}{\geqslant}
\newcommand{\e}{\varepsilon }
\newcommand{\p}{\partial}
\renewcommand{\O}{\Omega}
\renewcommand{\div}{\operatorname{div}}
\newcommand{\rot}{\operatorname{rot}}
\newcommand{\tr}{\operatorname{tr}}
\newcommand{\HH}{\mathbf{H}}
\newcommand{\1}{\mathbf{1}}
 \newcommand{\btau}{\boldsymbol{\tau}}
 \newcommand{\bxi}{\boldsymbol{\xi }}
  \newcommand{\bPsi}{\boldsymbol{\Psi }}
  \newcommand{\bphi}{\boldsymbol{\varphi }}
  \newcommand{\bnu}{\boldsymbol{\nu}}
\newcommand{\R}{\mathbb{R}}
\newcommand{\nn}{\mathbf{n}}
\newcommand{\dd}{\mathrm{d}}
\newcommand{\uu}{\mathbf{u}}
\newcommand{\vv}{\mathbf{v}}
\newcommand{\ww}{\mathbf{w}}
\renewcommand{\tilde}{\widetilde}
\def\({\left(}
\def\){\right)}
 \def\[{\begin{equation}}
 \def\]{\end{equation}}
\begin{document}

\title{\small{Phase transition    of  an anisotropic  \\Ginzburg--Landau  equation}}

\author{Yuning Liu}
\address{NYU Shanghai, 567 Yangsi W road, Pudong, Shanghai 200126, China,
and NYU-ECNU Institute of Mathematical Sciences at NYU Shanghai, 3663 Zhongshan Road North, Shanghai, 200062, China}
\email{yl67@nyu.edu}

\begin{abstract}  
\normalsize
We study the effective  
geometric motions  of an anisotropic  Ginzburg--Landau  equation   with   a small parameter $\varepsilon>0$ which characterizes    the width of the    transition layer.  For well-prepared initial datum, we show that  as $\varepsilon$ tends to zero 
  the   solutions  will develop a sharp interface limit   which   evolves   under    mean curvature flow. The bulk limits of the solutions  correspond to   a  vector field     $\mathbf{u}(x,t)$ which is   of unit length on  one side of  the interface,  and  is  zero on the other side.  The proof  combines the modulated energy method and weak convergence methods. In particular,  by  a (boundary) blow-up argument we show     that   $\mathbf{u}$   must be   tangent to  the sharp interface.        Moreover, it  solves    a geometric evolution equation for  the   Oseen--Frank model in liquid crystals.

\medskip
\noindent \textbf{Keywords:} modulated energy methods, weak convergence methods, blow-up analysis, mean curvature flow, phase-transition.

\medskip
\noindent \textbf{Mathematical Subject Classification}: 
	53E10, 
		35R35, 
35K58  
	35K57. 


 \end{abstract}
 \maketitle

\tableofcontents



\section{Introduction}
 In the study of liquid crystals one often encounters elastic energies with anisotropy, i.e. energies with distinct coefficients multiplying the square of the divergence and
 the curl of the order parameters. Typical examples involve  the Oseen--Frank model \cite{HardtKinderlehrerLin1986}, Ericksen's model \cite{MR1294333,lin2020isotropic}  and the Landau--De Gennes model \cite{ball2017mathematics}. From a microscopic point of view, the anisotropy of  these   models can be interpreted as  excluded volume potential of   molecular interaction, cf.  \cite{HLWZZ}. Anisotropic models also arise  in the theory of  superconductivity, cf. \cite{MR1313011}.
 The anisotropy brings various  new challenges to  the studies of   both   variational problems and their gradient flows of the aforementioned models. 
   In contrast to  the convergence analysis  of  isotropic models, i.e. the (scalar) Allen--Cahn equations (cf.  \cite{MR1177477,MR1237490,MR1205984,MR1674799,MR2040901,MR2253464,MR2440879,MR3495430}),  the powerful analytic tools such as     maximum principle and   monotonicity formula are not readily established  for anisotropic ones.

The  attempt  of
 this work  is  to study an  anisotropic system modeling  the isotropic-nematic  phase transition of a  liquid crystal droplet.    Let $d\in \{2,3\}$ be the dimension of the physical  domain $\Omega$ with $C^3$ boundary $\p\Omega$.
We consider the   anisotropic   Ginzburg--Landau type  energy
\begin{equation}\label{GL energy}
A_\e (\uu)=\int_{\O} \(\frac{\e }2 \mu |\div \uu|^2+\frac\e 2 |\nabla \uu|^2+\frac{1}\e  F(\uu)  \)\, dx.
\end{equation}
 Here $\uu=(u_1,u_2,u_3):\O\subset \R^d\mapsto  {\R^3}$ is the  order parameter describing the state of the system.  The function $F(\uu)$ is a    double equal-well  potential  which permits the isotropic-nematic  phase transition.  More precisely, it  attains  its 
global minimum value $0$ at $\{ 0\}\cup \mathbb{S}^2$.  
An example  of $F$ is the Chern--Simons-Higgs model 
$F(\uu)= |\uu|^2(1-|\uu|^2)^2$. See for instance   \cite{MR1050529,MR1050530} for the physics and 
  \cite{MR1324400,MR4076075,MR3910590} for the mathematical analysis  of  related   variational problems.     
The parameter  $\e>0 $ denotes the relative intensity of elastic and  bulk energy, which is  usually quite small. The parameter  $\mu>0$ is  material dependent which measures the degree of anisotropy.

The energy \eqref{GL energy} is a simplified case of the full Landau--De Gennes energy (cf. \cite{IyerXuZarnescu2015,MR4272911}).
The variational investigations  of 
 the isotropic-nematic phase transition  involving  \eqref{GL energy} were first done   by Golovaty, Novack, Sternberg and Venkatraman  \cite{MR4076075,MR3910590} in the static case in 2D.  The present  paper is concerned with  the    $L^2$-gradient flow of \eqref{GL energy}, i.e.  the following system. 
\begin{subequations} \label{GL system}
\begin{align}
\p_t  \uu_\e -\mu \nabla (\div \uu_\e) &= \Delta \uu_\e   - \frac 1{\e ^2} D F (\uu_\e  )&&~\text{in}~ \Omega\times (0,T),\label{Ginzburg-Landau}\\
\uu_\e  (x,0)&=\uu_\e ^{in}(x)&&~\text{in}~\Omega,\\
\uu_\e(x,t)&= 0 &&~\text{on }\p \Omega\times (0,T),\label{bc of omega}
\end{align}
 \end{subequations}
 where   $ D F (\uu  )$ is the gradient  of $F(\uu)$ with respect to $\uu$.
 We shall study the small $\e$-asymptotics   of this system with well-prepared  initial datum  $\uu_\e ^{in}$ that undergoes  a sharp transition across  a co-dimensional one   interface  $I_0\subset \R^d$. We shall show  that the energy density  $\frac\e 2 |\nabla \uu_\e|^2+\frac{1}\e  F(\uu_\e)$ will be concentrated on a mean curvature flow $I:=\bigcup_{t\geq 0} I_t\times \{t\}$ starting from $I_0$, namely
\[
\lim_{\e\to 0} \int_{\O} \( \frac\e 2 |\nabla \uu_\e|^2+\frac{1}\e  F(\uu_\e)  \)\, dx=\sigma\mathcal{H}^{d-1}( I_t),\label{intro energy conv}
\]
where $\mathcal{H}^{d-1}$ is the $(d-1)$ dimensional  Hausdorff measure, and $\sigma$ is a positive constant depending on $F$.
Moreover, we shall derive   bulk limit  $\uu:=\lim_{\e\to 0}\uu_\e$  away from $I_t$  and its   boundary condition on $I_t$.

\begin{figure}
 \begin{tikzpicture}[scale = 1.1]
\begin{axis}[
axis equal,
 axis lines=none,
]
\addplot[samples=100, domain=0:2*pi,    red] 
	({8+ 0.7*cos(deg(x))+0.1*cos(5* deg(x))}, {0.1+ 0.7*sin(deg(x))}) node[midway,yshift=4mm]{\tiny$I_t$};
 	\addplot[samples=100, domain=0:2*pi, black] 
	({8+1.5*cos(deg(x))+0.05*cos(5* deg(x))}, {1.5*sin(deg(x))}) 
	node[left,xshift=-20mm]{\tiny${\color{blue}\O_t^+}$}
	node[left,xshift=-1.5mm]{\tiny${\color{blue}\O_t^-}$}
	node[midway,xshift=8mm,yshift=-20mm]{\tiny$ \p\O$};
\end{axis} 
\end{tikzpicture}
\caption{$I_t$ is the interface,  $\O_t^+$ is the nematic  phase and $\O_t^-$ is the isotropic phase.}
\end{figure}

System \eqref{Ginzburg-Landau} is a vectorial and anisotropic generalization of the scalar   parabolic Allen--Cahn  equation.  In the scalar case,  there have been many   developments  on its  co-dimensional one    limit   to the (two-phase) mean curvature flow during the last two decades. Here we   mention two classes of results  and postpone  the  discussions of  some  others in the sequel.  One is  the convergence to a Brakke's flow by Ilmanen \cite{MR1237490} using a version of Huisken's monotonicity formula \cite{MR1030675} and  tools from geometric measure theory. See also \cite{MR1425577,MR1803974,MR2040901,MR2253464,MR3495430,MR2440879} and the references therein for further renovations. Despite of its energetic nature,  a major difficulty of such an approach is   the control of the  {so-called} {\it  discrepancy measure}, and   almost all  existing literatures using this approach    rely   crucially  on a version of   Modica's maximum principle \cite{MR803255}. 
   However, it is not clear  whether  Modica's maximum principle holds for  elliptic/parabolic  systems. 
  Another approach, which relies more  on   parabolic comparison principle,  is  the global in time   convergence  towards the  viscosity solution built  by    Chen--Giga-Goto \cite{MR1100211} and independently by Evans--Spruck   \cite{MR1100206}.  Such an approach has been implemented by Evans--Soner--Souganidis \cite{MR1177477}. One can also refer to   \cite{MR1205984,MR1674799} and   the references therein for further discussions.  These two approaches both give global in time (weak) convergences  to   weakly defined  solutions of   the mean curvature flow  (up to their    extinction times). However, as  their technics  involve  parabolic maximum principle  in one way or another,  it is not clear how to use them to attack   vectorial models   in general. It is worth mentioning that for radially symmetric initial datum, Bronsard--Stoth \cite{MR1443865}   have  obtained  global in time convergence to the mean curvature flow of planar circles.

 To the best of our  knowledge, there are mainly two approaches   to rigorously justify   the convergence of  the vectorial Allen--Cahn equations, both  assuming  {that}  the limiting interface motion   has a (local in time)  classical solution. Compared with the aforementioned methods,  which lead to global in time (weak) convergence, they have quite different natures.
 The first  approach  is the   asymptotic  expansion technics  developed by De Mottoni--Schatzman \cite{MR1672406} and  by Alikakos--Bates--Chen \cite{MR1308851}.  It  has been used   recently  by Fei--Wang--Zhang--Zhang   \cite{MR4059996} to study the isotropic-nematic phase transition in  liquid crystals, and by Fei--Lin--Wang--Zhang \cite{Fei2023aa}  {to study} matrix-valued   Allen--Cahn equations. 
The second  approach, which also assumes a classical  solution of the limiting interface motion (but not   the limiting   flows in the bulk regions),   is the  {modulated energy}  method developed  by Fischer--Laux--Simon  \cite{fischer2020convergence}.   Such a method is  motivated by Jerrard--Smets \cite{MR3353807} and Fischer--Hensel \cite{MR4072686}, and has been    generalized to  a  matrix--valued  model  by Laux--Liu  \cite{MR4284534}. 

In the present  work, we shall   use  the methods  {employed} in  \cite{fischer2020convergence,MR4284534} to    derive the energy convergence \eqref{intro energy conv} and  the bulk limit  $\uu=\lim_{\e_k\to 0}\uu_{\e_k}$ by establishing  two  modulated energy inequalities.  Moreover, the derivation of  the anchoring boundary condition of $\uu$ (see \eqref{anchoring bc} below) uses a blow-up argument, which  is  inspired by a recent work of Lin--Wang \cite{lin2020isotropic}.
There the authors  have studied   isotropic-nematic phase transitions in the static case  based  on an anisotropic Ericksen's model.

 To state the main result, we assume that 
\begin{equation}\label{interface}
 \begin{split}
 I=\bigcup_{t\in [0,T]} I_t \times \{t\}~
 \text{ is a smoothly evolving}  \\
  (d-1)\text{-dimensional submanifold       in}~\O,
 \end{split}
\end{equation}
 starting from a   $(d-1)$-dimensional   submanifold    $I_0\subset  \O$. 
 Here a $(d-1)$-submanifold refers to an embedded closed  smooth surface when $d=3$ and   curve when $d=2$.
 
 Let $\O^+_t$ be the domain enclosed by $I_t$, and   $d_I(x,t)$ be   the signed-distance  from $x$ to $I_t$ which takes negative  values in  $\O^-_t$, and positive  values  in $\O^+_t=\O\backslash \overline{\O^-_t}$. Equivalently,  
 \begin{equation}\label{def:omegapm}
\O^\pm_t:= \{x\in\Omega\mid d_I(x,t)\gtrless0\}.
\end{equation}
For   $\delta>0$,  the (open)  $\delta$-neighborhood of $I_t$ is denoted by
\begin{equation}
B_\delta(I_t):= \{x\in\Omega\mid   | d_I(x,t)|<\delta\}.
\end{equation}
Let  $\delta_0\in (0,1)$  be  {a  sufficiently small number} so that  the nearest point projection $$P_{I}(\cdot,t): B_{4\delta_0}(I_t) \rightarrow I_t$$ is smooth for any $t\in [0,T]$, and  that  the interface \eqref{interface} stays at least $4\delta_0$  {distant}  away  from the physical boundary  $\p\O$. A further  description of the geometry can be found in  Subsection \ref{subsectionmodulated} or  {in} \cite{MR2754215}.

The first step to  study  the singular limit of \eqref{GL system} is to construct a  {modulated energy}   which encodes a distance between  the energy in  \eqref{GL energy}  and   an energy  corresponding to   the moving interface $I_t$ in   \eqref{interface}.
Following \cite{MR3353807,MR4072686,fischer2020convergence},  we  define an    extension of     the   {inward}   normal vector $\nn(\cdot,t)$ of $I_t$ by 
$$\bxi (x,t):=\phi \( \frac{d_I(x,t)}{\delta_0}\)\nabla d_I(x,t)\quad \text{ for } x\in \O,$$
where  $\phi\in C_c^2( \R;  [0,1]) $ is  an appropriate cut-off function (see \eqref{phi func control} below for its precise definition).
Now we introduce 
\begin{align}\label{entropy}
E_\e  [\uu_\e   | I](t):= & \int_\O \frac{\e }2 \mu |\div \uu_\e (\cdot,t)|^2\, dx\nonumber\\
&+\int_\O\(\frac{\e }{2}\left|\nabla \uu_\e  (\cdot,t)\right|^2+\frac{1}{\e } {F  (\uu_\e  (\cdot,t))}-   \bxi\cdot\nabla \psi_\e (\cdot,t) \)\, dx,
\end{align}
where  
$\psi_\e$ is defined by 
\begin{align}
\psi_\e (x,t):=  \int_0^{| \uu_\e  (x,t)|} g(s)\, ds .\label{psi}
\end{align}
We shall work with a   class of potentials $F(\uu)$ under standard   assumptions (see e.g. \cite{MR1237490,MR1425577}).  That is,
\begin{align}\label{bulk}
  F(\uu) =f(|\uu|)= g^2(|\uu|)/2,
  \end{align}
 where $f$ is a double equal-well potential, namely,
\begin{subequations}\label{bulk assump2}
\begin{align}
&f\in C^\infty( \R_{\geq 0}),\quad f(s)>0\text{ for } s\in  \R_{\geq 0}\backslash \{0,1\},\\
&g\geq 0\text{ and  is  {locally} Lipschitz continuous}, ~g(0)=g(1)=0.\label{g lip}
\end{align}
\end{subequations}
Moreover, the following structural assumptions on $f$ are made:
 \begin{subequations}\label{bulk assump}
\begin{align}
&\exists s_0\in (0,1) \text{ s.t. } f'(s) >0\text{ on } (0,s_0)~\text{ and }~ f'(s) <0\text{ on } ( s_0,1);\label{f increase}\\
&f'(0)= f'(1)=0,\qquad f''(0), f''(1)>0;\label{bulk stable}\\
&  \exists c_0\in (0,1) \text{ s.t. }      2c_0^2 s^2\leq f(s)\leq 2c_0^{-2}s^2\text{ for any } s\geq 100.\label{growth f}
\end{align}
\end{subequations}
 After an appropriate modification  for large $|s|$, the function  $g(s)=|s||s^2-1|$, which corresponds to the Chern--Simons--Higgs potential, satisfies \eqref{bulk assump}.

 To control the bulk errors, we   need 
 another modulated energy: 
\[ B[\uu_\e  | I](t):= \int_\O   \Big(\sigma\chi-\sigma+ 2(\psi_\e-\sigma)^- \Big)\eta\circ d_I  \,\, dx+\int_\O  \( \psi_\e-\sigma\)^+|\eta\circ d_I| \, dx.\label{gronwall2new}\]
Here $\chi(\cdot,t):=\1_{\O_t^+}-\1_{\O_t^-}$, $h^\pm$ denote the positive/negative parts of a function $h$ respectively, and $\eta$ is a truncation of the identity function defined by  
\[\label{truncation eta}
\eta(z):=\left\{
\begin{array}{rl}
z\qquad \text{when } &z \in [-\delta_0, \delta_0],\\
 \delta_0\qquad \text{when } &z\geq \delta_0,\\
 -\delta_0\qquad \text{when } &z\leq -\delta_0. 
\end{array}
\right. \]
Note that  $( \eta\circ d_I)~\chi\geq 0$   in $\O$ due to our convention on  the signed-distance function, and thus  the two  integrands in \eqref{gronwall2new} are both  non-negative. We refer the readers to     the proof of Theorem \ref{sharp l1} below for more details on the positivity of \eqref{gronwall2new}.

Now we state    the main result of this work:
\begin{theorem}\label{main thm}
Let  $d\in \{2,3\}$, and the assumptions    \eqref{bulk assump2} and   \eqref{bulk assump} be in place.  
Assume that the moving interface $I$ in  \eqref{interface} evolves under   mean curvature  flow,   and   the initial datum of \eqref{GL system}  satisfies the following conditions:
\begin{subequations}\label{initial assupms}
\begin{align}
&  {\uu_\e^{in}\in W^{1,2}_0( \O )},\label{bdd0}\\
&A_\e(\uu_\e^{in})\leq c_1,\label{bdd1}\\
  & E_\e  [\uu_\e^{in}  | I_0]+B  [\uu_\e^{in}  | I_0]\leq c_1\e,\label{initial}
\end{align}
\end{subequations}
where     $c_1>0$ is   independent of $\e$.
  Then  there exists  $C_1>0$ independent of $\e$  {such that}
  \begin{align}\label{initial preserve}
 &\sup_{t\in [0,T]}E_\e  [\uu_\e  | I](t)+\sup_{t\in [0,T]}B  [\uu_\e  | I](t)  \leq C_1\e,\\
 &\sup_{t\in [0,T]}\int_{\O}| \psi_\e-\sigma\1_{\O_t^+}| \, dx \leq C_1\e^{1/4}.\label{con psi}
\end{align}
Moreover,  up to extraction of a subsequence   $\e_k\downarrow 0$,    
\begin{align}
 \uu_{\e_k}\xrightarrow{k\to\infty}    \1_{\O_t^+} \uu~&\text{  in }~   C([0,T];L^2_{loc} {(\O\backslash I_t)}),\label{reg limit}
\end{align}
 where     $\uu$ satisfies    the following properties:
\begin{subequations}
\begin{align} 
&\uu\in   L^\infty(0,T;W^{1,6/5}(\O^+_t;\mathbb{S}^2)), ~ \p_t \uu\in  L^2(0,T;   L^{6/5}(\O^+_t)),\label{reguptobdy1}\\
& \uu(x,t)=0 ~\text{ for every }~ t\in [0,T]\text{ and for a.e. }  ~ x\in \O_t^-,\label{u equal 0 region1}\\
&  (\uu\cdot\nn)(x,t)=0 \text{ for  a.e.  } t\in [0,T] \text{ and  for }  \mathcal{H}^{d-1}\text{-a.e. }x\in I_t.\label{anchoring bc}
\end{align}
\end{subequations}

\end{theorem}
 Among   the conditions in  \eqref{initial assupms}, the  {crucial}  one is \eqref{initial}, which is used to obtain the     inequalities  in Theorem \ref{thm close energy with div} and in Theorem \ref{sharp l1} below. 
  To construct   {an}  initial datum satisfying \eqref{initial assupms}, we need  the following result.
\begin{prop}  \label{prop initial data}
Let  $I_0\subset \O$ be a   $(d-1)$-dimensional submanifold.   For   any vector field
 \[\uu^{in}\in W^{1,2}(\O;\mathbb{S}^2) ~\text{ with } ~ \uu^{in}
 |_{I_0}\cdot \nn_{I_0}=0~a.e.~\text{ on }~I_0,\label{initialvectorfield}
 \] 
  there exists $\uu_\e^{in} \in  W^{1,2}_0(\O)\cap L^\infty(\O)$ such that 
\begin{align} \label{initial out}
\begin{cases}
\uu_\e ^{in}=\uu^{in} & \quad \text{in}~   \O^+_0\backslash B_{2\delta_0}(I_0), \\
 \uu_\e^{in}=0 & \quad\text{in }  \O^-_0\backslash B_{2\delta_0}(I_0),
\end{cases}
\end{align}
and     \eqref{initial assupms} holds for   a constant $c_1>0$ which only depends on $I_0$ and $\|\uu^{in}\|_{W^{1,2}(\O)}$. 
  \end{prop}
   
  We comment on the conditions in  \eqref{initialvectorfield}. When $d=3$, $I_0$ is a smooth closed surface in $\O$. Due to topological obstructions, a vector field satisfying \eqref{initialvectorfield} is usually not smooth. For instance, when $I_0$ is diffeomorphic to a 2-sphere, due to the hairy ball theorem, $\uu^{in}|_{I_0}$ must have (at least) one pole.  
  One example of such a pole,  which is  often encountered  in the theory of liquid crystal,   is given by the hedgehog profile. Locally the tangent vector field near such a pole is $C^1$-equivalent to the mapping  $\mathbf{h}(x)=x/|x|: B_1\cap \R^2\to \mathbb{S}^1$. Note that   $\mathbf{h}\in W^{\frac1 2,2}(B_1\cap \R^2)$ but  $\mathbf{h}\notin W^{1,2}(B_1\cap \R^2)$. 
  When $d=2$, there are fewer constraints  to arrange a vector field $\mathbf{f}:I_0\mapsto \mathbb{S}^2\subset \R^3$ that is orthogonal  to the planar curve $I_0\subset \R^2\times \{0\}$.
   In general, using  the extension lemma of Hardt--Lin (cf.  \cite[Lemma 2.2.10]{LinWang2008}),    any tangent vector field    $\mathbf{f}\in W^{\frac 12, 2}(I_0;\mathbb{S}^2)$  has an   extension  $\uu^{in}$   satisfying \eqref{initialvectorfield}.

An immediate  consequence of Theorem \ref{main thm} is the   convergence in  \eqref{intro energy conv}. Indeed, it follows from \eqref{initial preserve} and \eqref{energy bound0} below that  
$\int_{\O} \frac{\e }2 \mu |\div \uu|^2\,dx\xrightarrow{\e\to 0}0$,
and thus  such an   energy  does not   contribute  to the surface  energy in the limit. However,      it forces  $\uu$ to satisfy  the  boundary condition \eqref{anchoring bc}.
Now   applying    integration by parts to the last term of \eqref{entropy}, and then  using \eqref{con psi} and $\bxi|_{\p\O}=0$, we   find 
\begin{align}\label{earlycal1}
&\lim_{\e\to 0} \int_{\O} \( \frac\e 2 |\nabla \uu_\e|^2+\frac{1}\e  F(\uu_\e)  \)\, dx\nonumber\\
=&\lim_{\e\to 0} -\int_\O (\div\bxi) \psi_\e\,dx=-\sigma\int_{\O_t^+}(\div\bxi) \, dx=\sigma \mathcal{H}^{d-1}( I_t).
\end{align}
Note that the last step is due to the  Green's formula.

Under additional assumptions, we can show   that the limit $\uu$ in \eqref{reg limit} solves a geometric evolution equation in the bulk region  $\O^+:=\bigcup_{t\in [0,T]}\O^+_t\times \{t\}$.
 \begin{theorem}\label{main thm oseen frank}
 Let  $d=2$ and   the   assumptions   of Theorem \ref{main thm} be in place. Assume further  that 
\[\label{bulk2} 
\begin{split}
 &f(s)= s^2  \text{ for }  s\leq 1/4; f(s)= (s-1)^2   \text{ for }     s\geq   3/4; \\
 & f(s)\geq   1/{16}  \text{ for } s\in [1/4, 3/4];\\
 &  \sup_{s\in [1/4, 3/4]} |f'(s)|\leq 4.
  \end{split} \]
 Then there exists a sufficiently small $\mu>0$ (independent of $\e$)  {such  that}   the vector field $\uu$ in \eqref{reg limit}  satisfies 
\begin{align}
&\int_\O \p_t \uu\wedge \uu \cdot \bPsi\, dx+\int_\O   (\nabla \uu\wedge \uu)\cdot\nabla\bPsi\, dx\nonumber\\
&=\mu  \int_\O (\div \uu) \Big((\rot \bPsi)\cdot\uu-(\rot \uu)\cdot \bPsi\Big)\, dx\label{weak OF flow}
\end{align}
for almost every $t\in (0,T)$ and for every $ \bPsi\in C^1_c(\O_t^+;\R^3)$.
 \end{theorem}
In the above equation   $\wedge$  is the   wedge product in $\R^3$ and   
$\rot$ is the curl operator.  
The equation \eqref{weak OF flow} is the weak formulation of an    Oseen--Frank flow, written as   
\[\p_t \uu=\Delta \uu+\mu ( {\mathbb{I}_3}-\uu\otimes \uu) \nabla (\div \uu)+|\nabla \uu|^2\uu,\qquad \text{ for } t\in (0,T], x\in\O_t^+.\label{OF flow}\]
It can be   verified that when $\uu$ is sufficiently regular,  then  \eqref{weak OF flow} implies  \eqref{OF flow}. 
It is worth  mentioning  that      equation of the form \eqref{OF flow} is the $L^2$-gradient flow of the   variational problem \[\inf\int_{U}   \(\mu |\div \uu|^2+  |\nabla \uu|^2\)\, dx,\label{OS variational}\]
where the infimum is taken among mappings $\uu\in W^{1,2}(U;\mathbb{S}^2)$ fulfilling   certain boundary conditions on $\p U$. Note that  \eqref{OS variational} is a special case of the full Oseen--Frank model (cf.   \cite{HardtKinderlehrerLin1986}).



This  work will be organized as follows:  In Section \ref{sec entropy}, we shall adapt  the  {modulated energy}  method of   \cite{fischer2020convergence}  to the vectorial and anisotropic system \eqref{GL system}, and then  derive a differential inequality, i.e. Proposition \ref{gronwallprop}. 
Such an inequality was   previously  derived in \cite{MR4284534}  for a matrix-valued  equation. When applied to \eqref{GL system}, it   includes   a term which does not have an obvious sign due to the additional $\div$ term. This problem will be solved in  Section \ref{sec close} during  the proof of  the    inequality in Theorem \ref{thm close energy with div}. This theorem,  which     leads to   the first part of Theorem \ref{main thm},  is a major  novelty of   the present  work,   and  will be employed   in Section \ref{sec level} (see Theorem \ref{sharp l1}) 
to derive the   $L^1$-estimate of $\psi_\e$  in  \eqref{con psi}.  Such an  estimate will  be used  in  Lemma \ref{sharp est of bdy} to identify    appropriate   level sets of $\psi_\e$ which converge to $I_t$ in certain sense. With this key lemma, we derive in Section \ref{anchoring}   the  anchoring boundary condition \eqref{anchoring bc}, and thus finish the proof of Theorem \ref{main thm}.
 Section \ref{sec har} is devoted to  the  proof of Theorem \ref{main thm oseen frank}.  The proof of Proposition \ref{prop initial data} is quite similar to the construction given in \cite{MR4284534}. We present a proof   in Appendix \ref{app initial} for the convenience of the readers. 



\section{Preliminaries}\label{sec entropy}
\subsection{Notation and Conventions}\label{subsecnotation}
  We shall adopt the following conventions throughout the paper.
Unless specified otherwise, $C>0$ is  a generic constant whose value might change from line to line, and will depend    on the geometry of the interface \eqref{interface} but not on $\e$ or $t\in [0,T]$. 
For two square matrices $A$ and $B$,  their   Frobenius inner product is defined by  $A:B:= \tr A^{\mathsf{T}} B$, which  induces the  norm $|A|:=\sqrt{\tr A^{\mathsf{T}} A}$.
We shall also use the following  notation for a vector-valued function $\uu(x,t)=(u_1(x,t),u_2(x,t),u_3(x,t))$ where 
\[x=\begin{cases}
(x_1,x_2,x_3) &\text{ when }d=3,\\
 (x_1,x_2,0) &\text{ when }d=2.
\end{cases}
\]
\[\label{2ddivdef}
\begin{split}
\p_0&=\p_t, \quad \p_i=\p_{x_i} \quad 1\leq i\leq 3,\\
\nabla u_1&=(\p_1 u_1,\p_2 u_1,\p_3 u_1),\quad \div \uu=\sum_{i=1}^3\p_i u_i,\\
\rot \uu&=(\p_2 u_3-\p_3u_2 ,\p_3 u_1-\p_1u_3,\p_1 u_2-\p_2u_1).
\end{split}\]
To ease computations when $d=2$, $\nabla \uu$ will be    understood as the   matrix 
$\begin{pmatrix}
\p_1 u_1& \p_2 u_1 & 0 \\
\p_1 u_2& \p_2 u_2 & 0 \\
\p_1 u_3& \p_2 u_3 & 0
\end{pmatrix}
$, and 
 \[\begin{split}
 &\text{any  planar vector field is understood as a}\\
 &\text{3D vector field with vanishing 3rd component.}
 \end{split}
\]
 In particular, the latter  applies to   the normal and  the   mean curvature vector fields (cf. \eqref{velocity} and  \eqref{def:H} respectively below). 
For  a function of     $\uu$, like $F(\uu)$, its gradient will be denoted by 
$$ D F=(\p_{u_1} F,\p_{u_2} F,\p_{u_3} F).$$

We end this section by  the following assumptions  regarding   various   constants.
 Theorem \ref{main thm} will be proved for any fixed constant $\mu>0$, while Theorem \ref{main thm oseen frank} is valid for a sufficiently small (fixed) $\mu$. To simplify the presentation, we shall assume without loss of generality  that 
\[\mu\in (0,1)\text{ is a fixed constant}.\label{conventionmu}\]
Finally we can  {normalize} $g$ (cf. \eqref{bulk}) to have 
 \[\sigma:= \int_0^1 g(s)\, ds=1.\label{normalization}\]

As the $L^2$-gradient flow of \eqref{GL energy}, the system
\eqref{GL system} enjoys  the   following  energy dissipation law:
  \begin{equation}\label{dissipation}
A_\e (\uu_\e  (\cdot,\hat{T}))+  \int_0^{\hat{T}} \int_\O \e  |\p_t \uu_\e  |^2 \,d x d t=A_\e (\uu_\e ^{in}(\cdot))
\end{equation}
 for   arbitrarily large time      $\hat{T}$.
Combining this with the   theory of gradient flow and  the regularity theory for elliptic system  (cf. \cite{MR2401600,MR4272911}),  one can construct   a unique solution to system  \eqref{GL system} that satisfies 
 $$\uu_\e\in L^2(0,\hat{T};W^{2,2}(\O)\cap W^{1,2}_0(\O))\text{ and }\p_t \uu_\e\in L^2(\O\times (0,\hat{T})).$$
     So for almost every $\hat{t}\in (0,\hat{T})$, we have 
     $$\uu_\e(\cdot,\hat{t})\in W^{2,2}(\O)\hookrightarrow W^{1,6}(\O)\hookrightarrow C^{0,1/2}(\overline{\O}).$$ Under  the   assumption \eqref{growth f}, the nonlinearity of \eqref{Ginzburg-Landau} has a linear growth.   So considering   the system with initial datum $\uu_\e(\cdot,\hat{t})$, and using the H\"{o}lder estimates for parabolic system (cf. \cite{MR1399194}), we  deduce  that \[ \uu_\e \text{ is a classical solution of }\eqref{Ginzburg-Landau} \text{ in  } \O\times (0, \hat{T}].\label{regularityofsolution}\]
     
For initial datum undergoing   phase transitions  near the initial interface $I_0$, formal asymptotic analysis   suggests      that   $\nabla \uu_\e  $ will be   singular  near    $I_t$. However, the global dissipation    law  \eqref{dissipation} is not sufficient to   {yield}  the (strong) convergence of $\uu_\e$, not even  in the domain away from $I_t$. Following a recent work of  Fisher et al. \cite{fischer2020convergence}, we shall  establish  in this section  a differential  inequality which modulates  the concentration  and   leads to  the compactness of solutions in Sobolev spaces.

\subsection{The modulated energy}\label{subsectionmodulated}

We first  set up the geometry of the moving interface $I$ defined in \eqref{interface}.
Under a local parametrization $\bphi_t(s):U\subset \R^{d-1}\to I_t$, the mean curvature  flow reads
\[\p_t \bphi_t(s) = \kappa  \nn \label{csf}\]
where $ \kappa=\kappa (\bphi_t(s),t)$ is the mean  curvature   and $\nn=\nn(\cdot,t): I_t\mapsto \mathbb{S}^{d-1}$  is   the  inward  normal vector.
 For any  $t\in [0,T]$ we assume  that  the nearest-point projection $P_I(\cdot,t):B_{4\delta_0}(I_t)\mapsto I_t$ is smooth  for some sufficiently small $\delta_0\in (0,1)$ which only depends on the geometry of  $I$. Analytically we have $P_I(x,t)  =x-\nabla d_I(x,t) d_I(x,t)$. So for  each fixed $t\in [0,T]$,  any point $x\in B_{4\delta_0}(I_t)$ corresponds to a unique pair $(r,s)$ with    $r=d_I(x,t)$ and $s\in U$, and     the identity
$$d_I\Big(\bphi_t(s)+r\nn(\bphi_t(s),t), t\Big)= r$$ holds with   independent variables  $(r,s,t)$.  
  Differentiating this identity with respect to $r$ and $t$   leads to  the following identities: 
  \[\label{velocity}
  \begin{split}
  \nabla d_I(x,t)&= \nn(P_I(x,t),t),\\
  -\p_t d_I(x,t)&=\p_t \bphi_t(s)\cdot\nn(\bphi_t(s),t)=: V(s,t).
  \end{split}
  \]
  The significance of  these equations is that they   extend  the   normal vector and the normal velocity from  $I_t$ to  a neighborhood of it.  So we shall also use $\nn$ to denote $\nabla d_I$ when the latter is smooth.
We shall   extend $\nn$  to   the whole computational domain $\O$ by defining 
\[\bxi (x,t):=\phi \( \frac{d_I(x,t)}{\delta_0}\)\nabla d_I(x,t)\label{def:xi}\]
where    $\phi: \R\mapsto \R_+$ is  an     even, smooth function  that    decreases on  $[0,1]$, and satisfies 
\begin{equation}
\begin{cases}
\phi(z)>0~&\text{for}~|z|< 1,  \\
\phi(z)=0~&\text{for}~|z|\geq 1, \\
1-4 z^2\leq \phi(z)\leq 1-\frac 12 z^2~&\text{for}~|z|\leq  1/2.
\end{cases}\label{phi func control}
\end{equation}
To fulfill these requirements, we can simply choose 
$$\phi(z)
=\begin{cases}
e^{\frac 1{z^2-1}+1}~&\text{for}~|z|< 1,\\
0~&\text{for}~|z|\geq 1.
\end{cases}
$$

\picdis{\begin{tikzpicture}[scale = 0.8]
\begin{axis}[axis equal,axis lines = left,
]
 
\addplot[domain=0: 0.9999,color=red,samples=100]{exp(x^2/(x^2-1))} node[above] {$\phi(x)$};
\addplot[domain=0:0.5,color=black]{1-4*x^2} node[above] {$1-4x^2$};
\addplot[domain=0:1,color=black]{1-0.5*x^2} node[below] {$1-\frac{x^2}2$};
\end{axis}

\end{tikzpicture}
\qquad 
\begin{tikzpicture}[scale = 1]
\begin{axis}[axis equal,
 axis lines=none,
 xtick=\empty,
 ytick=\empty,
]

  \draw (30, 100) node   {$\O_t^-$};
    
  \addplot[samples=8, domain=1.8:pi-0.3, 
	variable=\t,
	quiver={
		u={-cos(deg(t))},
		v={-sin(deg(t))},
		scale arrows=0.09},
		->,black]
	({cos(deg(t))}, {sin(deg(t))});
	  \addplot[samples=10, domain=1.8:pi-0.3, 
	variable=\t,
	quiver={
		u={-1.5*cos(deg(t))},
		v={-1.5*sin(deg(t))},
		scale arrows=0.07},
		->,black]
	({1.5*cos(deg(t))}, {1.5*sin(deg(t))}); 
	  \addplot[samples=10, domain=1.8:pi-0.3, 
	variable=\t,
	quiver={
		u={-1.25*cos(deg(t))},
		v={-1.25*sin(deg(t))},
		scale arrows=0.13},
		->,black]
	({1.25*cos(deg(t))}, {1.25*sin(deg(t))}) ;
	\addplot[samples=100, domain=1.5:pi-0.2,dashed] 
	({1.5*cos(deg(x))}, {1.5*sin(deg(x))});
  \addplot[samples=100, domain=1.5:pi-0.2, dashed,black] 
	({cos(deg(x))}, {sin(deg(x))}) ;
	\addplot[samples=100, domain=1.5:pi-0.2, very thick,red] 
	({1.25*cos(deg(x))}, {1.25*sin(deg(x))}) node[right]{$I_t$};
	  \draw (100, 45) node   {$\bxi $};
  \draw (100, 10) node   {$\O_t^+$};
\end{axis}
\end{tikzpicture}
 }

     We   proceed with  the  extension of the mean  curvature.  Choosing a cut-off function $\eta_0(x,t)$ such that
  \[\eta_0(\cdot,t)\in C_c^\infty(B_{2\delta_0}(I_t);[0,1])~\text{ and  }\eta_0\equiv 1\text{  in }B_{\delta_0}(I_t),\label{cut-off eta delta}\] 
  we constantly extend the inward    mean  curvature vector   by  {defining}
\[  {\HH(x,t):=\kappa \nabla d_I(x,t)}  \quad\text{with}\quad \kappa(x,t)=-\Delta d_I  (P_I(x,t))\eta_0(x,t).\label{def:H}\]
These combined with \eqref{def:xi}  imply that 
\begin{subequations}
\begin{align}
& (\nn\cdot\nabla )\HH=0\text{ in }B_{\delta_0}(I_t),\label{normal H}\\
 & (\bxi \cdot\nabla )\HH=0 \text{ in }\O,\label{normal H2}\\
\label{bc n and H}
&  \bxi =0  ~\text{and}~\HH=0~\text{on}~\p\O.
\end{align}
\end{subequations}

\begin{lemma}
There exists a    constant $C>0$ depending only  on the geometry of the  interface \eqref{interface}  such  that
the  following properties hold for every $t\in [0,T]$:
 \begin{subequations}\label{xi der}
  \begin{align}
     |\nabla\cdot \bxi +\HH \cdot \bxi|    \leq C &~ | d_I|\quad \text{in}~B_{\delta_0}(I_t),\label{div xi H}\\
  \p_t d_I  +(\HH \cdot\nabla)  d_I  &=0\quad \text{in}~B_{\delta_0}(I_t),\label{mcf}\\
\p_t \bxi +\left(\HH \cdot \nabla\right) \bxi +\left(\nabla \HH\right)^{\mathsf{T}} \bxi &=0\quad \text{in}~B_{\delta_0}(I_t),\label{xi der1}  
\end{align}
  \end{subequations}
where  $\nabla \HH:=\{\p_j H_i\}_{1\leq i,j\leq 3}$ is a matrix with $i$ being the row index. 
\end{lemma}
  \begin{proof}
By introducing    $\phi_0(\tau):=\phi (\frac \tau{\delta_0})$, we can rewrite  \eqref{def:xi} as  $\bxi =\phi_0 \( d_I\)\nabla d_I$. Since $\phi$ is even,  we have   $\phi_0'(0)=0$. This   combined with Taylor's expansion in $d_I$  implies that 
\begin{align*}
\nabla\cdot \bxi &=|\nabla d_I|^2 \phi_0'(d_I)+\phi_0(d_I)\Delta d_I(x,t)
\\&=\qquad \quad O(d_I) +\phi_0 (d_I)\Delta d_I(P_I(x,t),t).
\end{align*}
This and    \eqref{def:H} lead to   \eqref{div xi H}.    
Using  \eqref{velocity} and \eqref{def:H}, we can write \eqref{csf} as the  transport equation \eqref{mcf}, which leads to  the following identities     in $B_{\delta_0}(I_t)$:
\begin{align*}
\p_t \nabla d_I+(\HH \cdot \nabla) \nabla d_I +(\nabla \HH )^{\mathsf{T}} \nabla d_I=0,\\
\p_t \phi_0(d_I )+ (\HH  \cdot\nabla) \phi_0(d_I)=0.
\end{align*}
 These two equations together  imply \eqref{xi der1}. 
  \end{proof}
It will be  convenient to introduce
\[\psi_\e=\dd^F \circ \uu_\e\quad \text{ where }\dd^F    (\vv):=\int_0^{|\vv|} g(s)\, ds.\]
It can be verified using \eqref{g lip} that 
  \[\dd^F    (\vv) \in C^1(\R^3),\quad \text{ and }\quad    D \dd^F  (\vv) =0~\text{ iff }~\vv\in \{0,\mathbb{S}^2\} .\label{df def}\]  
By    \eqref{bulk} we have 
\[|  D \dd^F (\vv)| = \sqrt{2 F  (\vv)},\qquad \forall \vv\in \R^3.\label{Dddfequ}\]
Recalling    \eqref{regularityofsolution},  we have 
 \begin{subequations}
   \begin{align}
  \label{ADM chain rule}
  \p_i\psi_\e (x,t)   &=    \p_i \uu_\e   (x,t)\cdot  D \dd^F  \(\uu_\e   (x,t)\)\quad \text{ for any }(x,t)\in \O\times (0, T],\\
  \nabla\psi_\e(x,t)&= \nabla |\uu_\e(x,t)| ~g(|\uu_\e(x,t)|)\qquad \text{ if } ~\uu_\e(x,t)\neq 0.\label{ADM chain rule1}
  \end{align}
  \end{subequations}
Now we  define    the phase-field analogues of   the normal vector and  the mean curvature  vector  respectively by 
\begin{subequations}
\begin{align}
 \nn_\e (x,t)&:=\begin{cases}
 \frac{\nabla \psi_\e }{|\nabla \psi_\e|}(x,t)&\text{ if } \nabla \psi_\e (x,t)\neq 0,\\
0& \text{otherwise}.
 \end{cases}
\label{normal diff}\\
\HH_\e (x,t)&:=\begin{cases}
-\left(\e  \Delta \uu_\e  -\frac{1}{\e } D F (\uu_\e  ) \right)\cdot\frac{\nabla \uu_\e  }{\left|\nabla \uu_\e  \right|} &\text{ if } \nabla \uu_\e\neq 0,\\
0&\text{otherwise}.
\end{cases}
 \label{mean curvature app}
\end{align}
\end{subequations}
 Note that in \eqref{mean curvature app}, the inner product is made with  the column  vectors of $\nabla \uu_\e=(\p_1 \uu_\e,\p_2 \uu_\e,\p_3 \uu_\e)$. We deduce from \eqref{normal diff} that 
 \[\nabla\psi_\e=|\nabla\psi_\e| \nn_\e\quad \text{ for any }(x,t).\label{normal diff2}\]
Define also the  orthogonal    projection $\Pi_{\uu_\e  }$ by 
\begin{align} \label{projection1}
\Pi_{\uu_\e  }  \p_i \uu_\e  :=
\begin{cases}
\(\p_i \uu_\e  \cdot\frac{\uu_\e}{| \uu_\e|  }\) \frac{\uu_\e}{| \uu_\e|  }&~\text{if}~ \uu_\e  \neq 0,\\
0,&~\text{otherwise}.
\end{cases}
\end{align}
\begin{lemma}
The following equations hold:
\begin{subequations}
\begin{align}
&\label{projectionnorm}
|\nabla \psi_\e | = |\Pi_{\uu_\e  } \nabla \uu_\e  | |  D \dd^F    (\uu_\e  )|\quad \text{ for  any   }(x,t),\\
& \label{projection}
\Pi_{\uu_\e  } \nabla \uu_\e  =\frac{|\nabla\psi_\e |} {|  D \dd^F     (\uu_\e  )|^2}  D \dd^F    (\uu_\e  )\otimes \nn_\e \quad \text{ on  } \{ x\mid |\uu_\e|\notin \{0,1\}\}.
\end{align}

\end{subequations}
\end{lemma}
\begin{proof}
Concerning \eqref{projectionnorm}, it suffices to work with  the set $\{ x\mid |\uu_\e|\notin \{0,1\}\}$ where  $g(|\uu_\e|)>0$ (cf. \eqref{bulk assump2}), for otherwise the equation will  follow from  \eqref{df def} and  \eqref{ADM chain rule}.
On this set  we deduce from \eqref{df def} that   $   D \dd^F (\uu_\e)=\frac{\uu_\e}{|\uu_\e|} g(|\uu_\e|)\neq 0$, and we can rewrite \eqref{ADM chain rule} as 
\[\p_i \psi_\e =\p_i \uu_\e\cdot\frac{D \dd^F (\uu_\e)}{|D \dd^F (\uu_\e)|} |D \dd^F (\uu_\e)|=\p_i \uu_\e\cdot\frac{\uu_\e}{| \uu_\e|  } |D \dd^F (\uu_\e)|.\label{pipsie1}\]
 This combined with \eqref{projection1}  implies \eqref{projectionnorm}.

Now we turn to the proof of \eqref{projection}. On the set $\{ x\mid |\uu_\e|\notin \{0,1\}\}$, we have

\begin{align}
\frac{|\nabla\psi_\e |} {| D \dd^F    (\uu_\e  )|^2} D \dd^F    (\uu_\e  )\otimes \nn_\e\overset{
 \eqref{normal diff2}}=\frac{ D \dd^F    (\uu_\e  )  } {| D \dd^F    (\uu_\e  )|^2}\otimes \nabla\psi_\e \overset{\eqref{ADM chain rule1}}= \frac{\uu_\e}{| \uu_\e|}\otimes \nabla |\uu_\e|,
\end{align}
and  this implies  \eqref{projection} in view of \eqref{projection1}.
\end{proof}

The following lemma establishes   coercivity properties  of  the modulated energy   \eqref{entropy}.
\begin{lemma}\label{lemma:energy bound}
The following estimates hold  for  every $t\in [0,T]$:
\begin{subequations} \label{energy bound}
\begin{align}
 \int_\O \(\frac{\e }{2} \left|\nabla \uu_\e  \right|^2+\frac{1}{\e } F  (\uu_\e  )-|\nabla \psi_\e | \) \, d x & \leq E_\e  [\uu_\e   | I] , \label{energy bound-1}\\
 \e  \int_\O \(  \mu |\div \uu_\e |^2+\left|\nabla \uu_\e  -\Pi_{\uu_\e  }\nabla \uu_\e  \right|^2   \)\, d x & \leq 2   E_\e  [\uu_\e   | I] ,\label{energy bound0}\\
  \int_\O\left(\sqrt{\e }\left|\Pi_{\uu_\e  }\nabla \uu_\e  \right|-\frac1{\sqrt{\e }} \left|  D \dd^F    (\uu_\e  )\right| \right)^{2}\, d x & \leq  2    E_\e  [ \uu_\e   | I] ,\label{energy bound2}\\
 \int_\O\( {\frac{\e }{2}}\left| \nabla \uu_\e  \right|^{2} +\frac{1}{\e } F  (\uu_\e  )+\left|\nabla \psi_\e\right|\)\left(1-\bxi  \cdot\nn_\e\right) \, d x & \leq  {4 E_\e  [ \uu_\e   | I]},\label{energy bound1}
\\
   \int_\O \(\frac{\e }2 \left|\nabla \uu_\e  \right| ^{2} +\frac{1}{\e } F  (\uu_\e  ) +|\nabla\psi_\e |\) \min\(d^2_I,1\)\, d x  & \leq C E_\e  [ \uu_\e   | I]
\label{energy bound3}
 \end{align}
\end{subequations}
where $C=C(\delta_0,\phi)$.
\end{lemma}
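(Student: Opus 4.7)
The plan is to expose $E_\e[\uu_\e|I](t)$ as a sum of four manifestly non-negative defect integrands, from which every bound follows by either dropping terms or applying an elementary pointwise comparison. Writing $\bxi\cdot\nabla\psi_\e = (\bxi\cdot\nn_\e)|\nabla\psi_\e|$ from \eqref{normal diff}, splitting $|\nabla\uu_\e|^2$ orthogonally via the projection \eqref{projection1}, and completing the square using $|\p d_F(\uu_\e)|^2 = g^2(|\uu_\e|) = 2F(\uu_\e)$ from \eqref{bulk} together with \eqref{projectionnorm}, I would rewrite
\begin{equation*}
E_\e[\uu_\e|I] = \int_\O \Bigl\{ \tfrac{\e\mu}{2}|\div\uu_\e|^2 + \tfrac{\e}{2}|\nabla\uu_\e - \Pi_{\uu_\e}\nabla\uu_\e|^2 + \tfrac{1}{2}\bigl(\sqrt{\e}|\Pi_{\uu_\e}\nabla\uu_\e| - \tfrac{1}{\sqrt{\e}}|\p d_F(\uu_\e)|\bigr)^2 + |\nabla\psi_\e|(1-\bxi\cdot\nn_\e) \Bigr\} \, dx.
\end{equation*}
Non-negativity of the fourth integrand uses $|\bxi|\leq 1$, which follows from \eqref{def:xi} and $\phi\leq 1$ in \eqref{phi func control}.

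From this master identity, \eqref{energy bound-1} is obtained by recombining the second and third integrands back into $\tfrac{\e}{2}|\nabla\uu_\e|^2 + \tfrac{1}{\e}F(\uu_\e) - |\nabla\psi_\e|$ and discarding the divergence and last integrands; \eqref{energy bound0} follows by dropping the third and fourth integrands and multiplying by two; and \eqref{energy bound2} is exactly twice the integral of the third integrand. For \eqref{energy bound1} I would employ the decomposition
\begin{equation*}
\bigl(\tfrac{\e}{2}|\nabla\uu_\e|^2 + \tfrac{1}{\e}F(\uu_\e) + |\nabla\psi_\e|\bigr)(1-\bxi\cdot\nn_\e) = \bigl(\tfrac{\e}{2}|\nabla\uu_\e|^2 + \tfrac{1}{\e}F(\uu_\e) - |\nabla\psi_\e|\bigr)(1-\bxi\cdot\nn_\e) + 2|\nabla\psi_\e|(1-\bxi\cdot\nn_\e),
\end{equation*}
control the first summand by $2\bigl(\tfrac{\e}{2}|\nabla\uu_\e|^2 + \tfrac{1}{\e}F(\uu_\e) - |\nabla\psi_\e|\bigr)$ using the pointwise bound $0\leq 1-\bxi\cdot\nn_\e\leq 2$ and \eqref{energy bound-1}, and absorb the second summand into the fourth integrand of the master identity; this yields $C=4$.

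Finally, \eqref{energy bound3} reduces to the pointwise comparison $\min(d_I^2,1)\leq C(1-\bxi\cdot\nn_\e)$ on $\O$. Outside $I_t(\delta_0)$ the cutoff makes $\bxi\equiv 0$, so $1-\bxi\cdot\nn_\e = 1\geq\min(d_I^2,1)$. On $I_t(\delta_0/2)$ the upper bound $\phi(x)\leq 1-\tfrac{x^2}{2}$ from \eqref{phi func control} and $\bxi\cdot\nn_\e\leq\phi(d_I/\delta_0)$ give $1-\bxi\cdot\nn_\e\geq d_I^2/(2\delta_0^2)$. On the intermediate annulus $\{\delta_0/2\leq|d_I|\leq\delta_0\}$, monotonicity of $\phi$ on $[0,1]$ yields $1-\bxi\cdot\nn_\e\geq 1-\phi(1/2)\geq 1/8$, while $\min(d_I^2,1)\leq\delta_0^2$, so the comparison holds. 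Multiplying by the non-negative density $\tfrac{\e}{2}|\nabla\uu_\e|^2 + \tfrac{1}{\e}F(\uu_\e) + |\nabla\psi_\e|$ and integrating reduces \eqref{energy bound3} to \eqref{energy bound1}. I anticipate no substantive obstacle; the only care needed is that at degenerate points where $\uu_\e=0$ or $\nabla\psi_\e=0$ the defect integrands vanish consistently, by the conventions \eqref{normal diff} and \eqref{projection1}, so the master identity and \eqref{projectionnorm} remain valid throughout $\O$.
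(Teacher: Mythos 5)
Your proposal is correct, and it follows the same route as the paper's proof: the paper's equation \eqref{E decom1} is precisely your master decomposition (the paper states non-negativity of the middle block and appeals to \eqref{projectionnorm} and ``completing a square,'' which is exactly the explicit $\tfrac{1}{2}\bigl(\sqrt{\e}|\Pi_{\uu_\e}\nabla\uu_\e| - \tfrac{1}{\sqrt{\e}}|\p d_F(\uu_\e)|\bigr)^2$ term you wrote out), the derivation of \eqref{energy bound1} via $0\leq 1-\bxi\cdot\nn_\e\leq 2$ combined with \eqref{energy bound-1} is identical, and the pointwise comparison $\min(d_I^2,1)\lesssim 1-\bxi\cdot\nn_\e$ is the paper's inequality \eqref{lowerbdcali}, proved there in one chain via $1-\phi(d_I/\delta_0)\geq\min\bigl(\tfrac{d_I^2}{2\delta_0^2},\,1-\phi(\tfrac12)\bigr)$ where you instead split into three explicit regions --- same estimates, different bookkeeping.
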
 
\begin{proof}
The case when $\mu= 0$   has been  done in \cite{MR4284534}, and the proof  carries over  to the present case.  First, it follows from   \eqref{projection1}  that 
 \[\left|\nabla \uu_\e  -\Pi_{\uu_\e  }\nabla \uu_\e  \right|^2+\left| \Pi_{\uu_\e  }\nabla \uu_\e  \right|^2=\left|\nabla \uu_\e    \right|^2.\label{gougudingli}\] 
Combining this  with \eqref{normal diff2},   we  can write
 \begin{align}
&\frac{\e }{2} \left|\nabla \uu_\e  \right|^2+\frac{1}{\e } F  (\uu_\e  )-\bxi\cdot\nabla \psi_\e  \nonumber\\
= &  \frac \e  2\left|  \nabla \uu_\e  \right|^2     +\frac{1}{\e } F  (\uu_\e  )-|\nabla \psi_\e | +   |\nabla\psi_\e | (1-\bxi \cdot\nn_\e )\nonumber\\
= & \frac \e  2  \left|\nabla \uu_\e  -\Pi_{\uu_\e  }\nabla \uu_\e  \right|^2  +    \(\frac \e  2\left| \Pi_{\uu_\e  }\nabla \uu_\e  \right|^2     +\frac{1}{\e } F  (\uu_\e  )-|\nabla \psi_\e |\)\nonumber \\
& +   |\nabla\psi_\e | (1-\bxi \cdot\nn_\e ).\label{E decom1}
\end{align}
 By \eqref{Dddfequ} and \eqref{projectionnorm},   the second term in the last display  is non-negative. Since $|\bxi|\leq 1$, we   also have    \eqref{energy bound-1}, \eqref{energy bound0}, \eqref{energy bound2} and 
\[E_\e  [ \uu_\e   | I]\geq   \int_\O\left(1-\bxi  \cdot\nn_\e\right)\left|\nabla \psi_\e\right|\,dx.\label{diffusivecalibration}\]
 Combining \eqref{diffusivecalibration} with  \eqref{energy bound-1} and the inequality 
 $ 1-\bxi  \cdot\nn_\e\leq 2$, we obtain \eqref{energy bound1}.
Finally, by \eqref{phi func control} and $\delta_0\in (0,1)$ we have 
\[1-\bxi  \cdot\nn_\e \geq 1-\phi\(\frac {d_I}{\delta_0}\)  \geq \min \(\frac {d^2_I}{2\delta_0^2}, 1-\phi(\tfrac 1 2)\)\geq  C \min(d^2_I,1).\label{lowerbdcali}\] 
This together with  \eqref{energy bound1} implies  \eqref{energy bound3}.
\end{proof}


The following result was first proved in \cite{fischer2020convergence} for  the scalar  Allen-Cahn equation, and was generalized to the vectorial case in \cite{MR4284534}.
\begin{prop}\label{gronwallprop}
	There exists a generic  constant $C>0$ depending only  on the geometry of the  interface \eqref{interface}  {such  that}
	\begin{align}
	\frac{d}{d t} E_\e  [ \uu_\e   | I] &+\frac 1{2\e }\int_\O \(\e ^2 \left| \p_t \uu_\e    \right|^2-|\HH_\e |^2\)\,dx+\frac 1{2\e }\int_\O \Big| \e \p_t \uu_\e    -(\nabla\cdot  \bxi  )  D \dd^F    (\uu_\e  )    \Big|^2\,dx\nonumber \\
	&+\frac 1{2\e }\int_\O \Big| \HH_\e -\e   |\nabla \uu_\e  |\HH \Big|^2\,dx  \leq CE_\e  [ \uu_\e   | I]\qquad \text{ for } t\in (0,T]. \label{gronwall}
	\end{align}
\end{prop}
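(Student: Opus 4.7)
The plan is to differentiate $E_\e[\uu_\e|I](t)$ term by term, substitute the PDE \eqref{Ginzburg-Landau}, and regroup the resulting expressions so that the three quadratic quantities appearing on the left-hand side of \eqref{gronwall} emerge, with all remaining errors bounded by $CE_\e[\uu_\e|I]$ through the coercivity estimates of Lemma \ref{lemma:energy bound}. First I would differentiate the pure energy piece $\int \frac{\e}{2}\mu|\div\uu_\e|^2 + \frac{\e}{2}|\nabla\uu_\e|^2 + \frac{1}{\e}F(\uu_\e)\, dx$; testing the PDE against $\p_t\uu_\e$ and integrating by parts with the Dirichlet condition \eqref{bc of omega} gives the standard dissipation $-\int \e|\p_t\uu_\e|^2\, dx$.

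Next I would handle $-\frac{d}{dt}\int \bxi\cdot\nabla\psi_\e\, dx$ by splitting the time derivative between $\bxi$ and $\nabla\psi_\e$. For the piece with $\p_t\bxi$ I would invoke the transport equation \eqref{xi der1} (noting $\p_t\bxi=0$ outside $I_t(\delta_0)$). For the piece with $\p_t\nabla\psi_\e$ I would use the chain rule \eqref{ADM chain rule} to rewrite $\p_t\psi_\e = \p d_F(\uu_\e)\cdot\p_t\uu_\e$ and integrate by parts, using $\bxi=0$ on $\p\O$ from \eqref{bc n and H}, to produce $+\int (\nabla\cdot\bxi)\,\p d_F(\uu_\e)\cdot\p_t\uu_\e\, dx$. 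Combined with the dissipation, this cross product is precisely what is needed to complete the square $\frac{1}{2\e}\int|\e\p_t\uu_\e - (\nabla\cdot\bxi)\p d_F(\uu_\e)|^2\, dx$, the leftover $\frac{1}{2\e}(\nabla\cdot\bxi)^2|\p d_F(\uu_\e)|^2$ being controlled via $|\p d_F(\uu_\e)|^2 = 2F(\uu_\e)$ together with \eqref{div xi H} and \eqref{energy bound3}.

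For the second square $\frac{1}{2\e}\int|\HH_\e - \e|\nabla\uu_\e|\HH|^2\, dx$, the ingredients $\HH$, $\nabla\HH$ and $\nabla\bxi$ appearing in \eqref{xi der1}, \eqref{mcf} and \eqref{div xi H} should assemble when further integration by parts transfers derivatives from $\nabla\psi_\e$ onto $\bxi$. Using the identity \eqref{projection} together with $|\p d_F(\uu_\e)|^2 = 2F(\uu_\e)$, the relevant contribution matches $\e|\nabla\uu_\e|\HH$ up to error terms carrying the weights $(1-\bxi\cdot\nn_\e)$ or $d_I^2$, which are absorbed by the coercivity estimates \eqref{energy bound1} and \eqref{energy bound3}. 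The third quadratic quantity $\frac{1}{2\e}(\e^2|\p_t\uu_\e|^2-|\HH_\e|^2)$ arises naturally once one decomposes the dissipation $-\int\e|\p_t\uu_\e|^2\, dx$ in view of the PDE \eqref{Ginzburg-Landau} and the definition \eqref{mean curvature app} of $\HH_\e$.

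The hard part will be the anisotropic contribution: substituting $\e\p_t\uu_\e = \e\Delta\uu_\e - \frac{1}{\e}\p F(\uu_\e) + \e\mu\nabla\div\uu_\e$ in the course of identifying $\HH_\e$ produces a cross term involving $\e\mu\nabla\div\uu_\e$ that has no obvious sign, as emphasized in the introduction. For the preliminary Gronwall inequality stated here it suffices to bound such a term crudely via Cauchy--Schwarz, using $\e\mu\int|\div\uu_\e|^2\, dx \leq 2E_\e[\uu_\e|I]$ from \eqref{energy bound0}, so that it can be absorbed into the right-hand side $CE_\e[\uu_\e|I]$. The sharper treatment of this anisotropic cross term is precisely what will be carried out in Section \ref{sec close} when proving Theorem \ref{thm close energy with div}.
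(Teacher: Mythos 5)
Your overall blueprint is in the right spirit: differentiate the energy, invoke the dissipation law, split $\partial_t(\bxi\cdot\nabla\psi_\e)$ using \eqref{xi der1} and the chain rule, integrate by parts to produce $\int(\nabla\cdot\bxi)\p d_F(\uu_\e)\cdot\p_t\uu_\e$, complete squares, and absorb the leftovers via Lemma \ref{lemma:energy bound}. This matches the architecture of the paper's proof (Lemma \ref{lemma:expansion 1} plus Lemma \ref{lemma exact dt relative entropy}).

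However, the final paragraph contains a genuine error, and it is the one substantive place where your argument would not go through. You propose to deal with the anisotropic cross term produced by $\e\mu\nabla\div\uu_\e$ ``crudely via Cauchy--Schwarz, using $\e\mu\int|\div\uu_\e|^2\,dx \leq 2E_\e[\uu_\e|I]$.'' That cannot work: after integration by parts the offending quantity is essentially $\mu\int \nabla\div\uu_\e\cdot(\HH\cdot\nabla)\uu_\e\,dx$, which carries a \emph{second} derivative $\nabla\div\uu_\e$. The modulated energy controls $\e\int|\div\uu_\e|^2$, a first-order quantity with an $\e$-weight, and there is no way to bound $\int\nabla\div\uu_\e\cdot(\HH\cdot\nabla)\uu_\e$ by $E_\e[\uu_\e|I]$ with a single Cauchy--Schwarz. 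This is precisely why the paper devotes the whole of Theorem \ref{thm close energy with div} (Section \ref{sec close}) to a delicate integration-by-parts and curl/divergence decomposition of this term.

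The resolution — which your proposal misses — is that the anisotropic cross term never appears in the proof of Proposition \ref{gronwallprop} at all. The quantity $\HH_\e$ in \eqref{mean curvature app} is defined directly in terms of $\Delta\uu_\e$ and $\partial F(\uu_\e)$, and the energy-stress-tensor identity $\nabla\cdot T_\e=\HH_\e|\nabla\uu_\e|$ (which drives Lemma \ref{lemma:expansion 1}) is purely variational: it does not require substituting the PDE for $\HH_\e$. Because the proposition keeps $\frac{1}{2\e}\int(\e^2|\p_t\uu_\e|^2-|\HH_\e|^2)\,dx$ on the \emph{left-hand side} and makes no claim about its sign, you never have to expand $\HH_\e$ via \eqref{Ginzburg-Landau}. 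As the paper notes, ``the proof here is exactly the same as the case $\mu=0$'' because the energy dissipation law retains the same form with the divergence term present. The sign difficulty you are anticipating is real, but it belongs to the next step (showing that term is controllable), not to this proposition. So you should drop the last paragraph's Cauchy--Schwarz claim entirely; the $\mu$-dependence simply does not enter the derivation of \eqref{gronwall}.
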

   We present a proof of \eqref{gronwall} in Appendix \ref{appendix} for the convenience of the readers.    

 \section{Uniform estimates of solutions}\label{sec close}
Observe that   the second term on the  left-hand side of \eqref{gronwall} does not have an obvious sign. However, we have the following theorem.
  
\begin{theorem}\label{thm close energy with div}
Under the assumptions of Theorem \ref{main thm}, there exists a  constant $C_0>0$,  which depends only  on the geometry of the  interface \eqref{interface} and $c_1$ (cf.  \eqref{initial}), such that   
 \begin{align}\label{energy bound4}
\sup_{t\in  [0,T]} \frac 1 \e E_\e  [ \uu_\e   | I]+&\int_0^T\int_\Omega \( \Big|\p_t \uu_\e  +(\HH \cdot\nabla) \uu_\e  \Big|^2 +   \Big|\p_t \uu_\e  -\Pi_{\uu_\e  }\p_t \uu_\e  \Big|^2 \)  \, dxdt  \leq C_0.
 \end{align}
  \end{theorem}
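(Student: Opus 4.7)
The plan is to start from the Grönwall-type inequality \eqref{gronwall} of Proposition~\ref{gronwallprop} and neutralise the sign-indefinite integral $\mathcal{G}_0:=\tfrac{1}{2\e}\int(\e^2|\p_t\uu_\e|^2-|\HH_\e|^2)\,dx$, which is no longer manifestly non-negative once $\mu>0$ because $\e\p_t\uu_\e$ no longer equals $\e\Delta\uu_\e-\tfrac{1}{\e}\p F(\uu_\e)$. The crucial algebraic step is to combine $\mathcal{G}_0$ with the other good term $\mathcal{G}_2:=\tfrac{1}{2\e}\int|\HH_\e-\e|\nabla\uu_\e|\HH|^2\,dx$. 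Using the column-wise definition \eqref{mean curvature app} of $\HH_\e$, the cross term unfolds as $\HH_\e\cdot(\e|\nabla\uu_\e|\HH)=-\e(\e\Delta\uu_\e-\tfrac{1}{\e}\p F(\uu_\e))\cdot(\HH\cdot\nabla)\uu_\e$; substituting $\e\Delta\uu_\e-\tfrac{1}{\e}\p F=\e\p_t\uu_\e-\mu\e\nabla\div\uu_\e$ from \eqref{Ginzburg-Landau}, completing a square, and invoking the planar algebraic identity $|A|^2|H|^2-|AH|^2=|AH^\perp|^2$ (valid for $A\in\R^{2\times 2}$, $H\in\R^2$) with $A=\nabla\uu_\e$, $H=\HH$, one obtains
\begin{equation*}
\mathcal{G}_0+\mathcal{G}_2=\tfrac{\e}{2}\int|\p_t\uu_\e+(\HH\cdot\nabla)\uu_\e|^2\,dx+\tfrac{\e}{2}\int|(\HH^\perp\cdot\nabla)\uu_\e|^2\,dx-\mu\e\int\nabla\div\uu_\e\cdot(\HH\cdot\nabla)\uu_\e\,dx.
\end{equation*}
The first term is precisely the target quadratic appearing in \eqref{energy bound4}, the second is non-negative, and the last is the anisotropic residual. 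The algebraic identity above fails in higher dimensions, which is exactly why the theorem is restricted to planar vector fields.

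The residual is then transferred by integration by parts, legitimate because $\uu_\e=0$ and $\HH=0$ on $\p\O$ by \eqref{bc n and H}:
\begin{equation*}
-\mu\e\int\nabla\div\uu_\e\cdot(\HH\cdot\nabla)\uu_\e\,dx=\mu\e\int\div\uu_\e(\nabla\uu_\e\!:\!\nabla\HH)\,dx-\tfrac{\mu\e}{2}\int(\div\HH)|\div\uu_\e|^2\,dx.
\end{equation*}
The second summand is immediately controlled by $CE_\e[\uu_\e|I]$ via the divergence coercivity \eqref{energy bound0}. For the first, I would exploit the planar Hodge identity $\nabla\uu_\e\!:\!\nabla\HH=\div\uu_\e\,\div\HH+\rot\uu_\e\,\rot\HH+\det(\nabla u_1^\e,\nabla H_2)-\det(\nabla u_2^\e,\nabla H_1)$: the $\div\HH$ piece is handled directly by \eqref{energy bound0}; the $\rot\HH$ piece is handled by Cauchy--Schwarz together with the concentration estimate \eqref{energy bound3}, which confines $|\nabla\uu_\e|^2$ to a small neighbourhood of $I_t$, with a small Young absorption into the good dissipation $\tfrac{\e}{2}\int|(\HH^\perp\cdot\nabla)\uu_\e|^2$; the null-Lagrangian determinants are integrated by parts once more against $\div\uu_\e$, with vanishing boundary contributions thanks to $\uu_\e|_{\p\O}=0$.

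The second target quadratic in \eqref{energy bound4} is furnished by the remaining good term $\mathcal{G}_1:=\tfrac{1}{2\e}\int|\e\p_t\uu_\e-(\nabla\cdot\bxi)\p d_F(\uu_\e)|^2\,dx$. Since $\p d_F(\uu_\e)=g(|\uu_\e|)\uu_\e/|\uu_\e|$ is collinear with $\uu_\e$, the orthogonal decomposition of $\p_t\uu_\e$ along and transverse to $\uu_\e$ yields $\mathcal{G}_1\geq\tfrac{\e}{2}\int|\p_t\uu_\e-\Pi_{\uu_\e}\p_t\uu_\e|^2\,dx$. Plugging all of the above back into \eqref{gronwall} produces the closed differential inequality
\begin{equation*}
\tfrac{d}{dt}E_\e[\uu_\e|I]+\tfrac{\e}{2}\int|\p_t\uu_\e+(\HH\cdot\nabla)\uu_\e|^2\,dx+\tfrac{\e}{2}\int|\p_t\uu_\e-\Pi_{\uu_\e}\p_t\uu_\e|^2\,dx\leq C\,E_\e[\uu_\e|I],
\end{equation*}
which upon integration in time and Grönwall's lemma, together with the well-prepared bound $E_\e[\uu_\e|I](0)\leq c_1\e$ from \eqref{initial}, delivers \eqref{energy bound4}.

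The hard part is closing the estimate of the anisotropic residual $\mu\e\int\div\uu_\e(\nabla\uu_\e\!:\!\nabla\HH)\,dx$ by $C\,E_\e[\uu_\e|I]$ without incurring an additive $O(1)$ constant: crudely using the a priori $\e\|\nabla\uu_\e\|_{L^2}^2\leq 2A_\e(\uu_\e^{in})\leq 2c_1$ from the dissipation \eqref{dissipation} already costs such a loss, which would destroy the $O(\e)$ Grönwall scaling. Overcoming it genuinely requires both the two-dimensional Hodge splitting of $\nabla\uu_\e$ into $\div$ and $\rot$ pieces and the concentration estimate \eqref{energy bound3}, so that the contribution of $|\nabla\uu_\e|^2$ from outside $I_t(\delta_0)$ is of size $E_\e[\uu_\e|I]/\e$ rather than the trivial $O(1/\e)$; only this combination allows the residual to be bounded by $C\,E_\e[\uu_\e|I]$ alone.
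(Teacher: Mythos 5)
Your opening is sound and matches the paper: you start from Proposition~\ref{gronwallprop}, split $\mathcal G_1$ along $\Pi_{\uu_\e}$ to get $\int|\p_t\uu_\e-\Pi_{\uu_\e}\p_t\uu_\e|^2$, and combine $\mathcal G_0+\mathcal G_2$ using \eqref{mean curvature app} and the equation to produce $\int|\p_t\uu_\e+(\HH\cdot\nabla)\uu_\e|^2$ plus the residual $-2\mu\int\nabla\div\uu_\e\cdot(\HH\cdot\nabla)\uu_\e$. (The paper just drops the Cauchy--Schwarz gap, while you retain it as $\frac\e2\int|(\HH^\perp\cdot\nabla)\uu_\e|^2$ via the planar identity; that is fine and slightly sharper.) The first integration by parts, which produces $-\tfrac12\int(\div\HH)(\div\uu_\e)^2$ plus a $\nabla\uu_\e$-versus-$\nabla\HH$ pairing, also matches the paper's \eqref{control is div2}, modulo a transpose (the paper's pairing is $\p_iH_j\p_ju_\e^i$, not the Frobenius product $\nabla\uu_\e:\nabla\HH=\p_ju_\e^i\p_jH^i$) and a sign error in your determinant decomposition.

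The genuine gap is in the $\rot\HH$ piece. You propose to control $\mu\int|\div\uu_\e|\,|\rot\uu_\e|\,|\rot\HH|$ by Cauchy--Schwarz, the concentration estimate \eqref{energy bound3}, and Young absorption into $\frac\e2\int|(\HH^\perp\cdot\nabla)\uu_\e|^2$. This cannot work. Writing $\nn=\nabla d_I$, $T=\nn^\perp$, one has $\rot\uu_\e=T\cdot\bigl[(\nn\cdot\nabla)\uu_\e\bigr]-\nn\cdot\bigl[(T\cdot\nabla)\uu_\e\bigr]$, and the normal-derivative part $T\cdot(\nn\cdot\nabla)\uu_\e$ is not reached by the tangential dissipation $\int|\kappa(T\cdot\nabla)\uu_\e|^2$ (and not by $\div\uu_\e$ either, which controls the orthogonal combination $\nn\cdot(\nn\cdot\nabla)\uu_\e+T\cdot(T\cdot\nabla)\uu_\e$). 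The concentration estimate \eqref{energy bound3} bounds $|\nabla\uu_\e|^2$ only \emph{away} from $I_t$, weighted by $\min(d_I^2,1)$; on $I_t(\delta)$, where $\rot\HH$ lives, it gives nothing beyond the trivial $\int|\nabla\uu_\e|^2\lesssim\e^{-1}$. Cauchy--Schwarz therefore yields at best $\|\div\uu_\e\|_{L^2}\|\rot\uu_\e\|_{L^2}\lesssim(\e^{-1}E_\e)^{1/2}\e^{-1/2}$, which after Young produces an additive term of size $\e^{-1}$, not $O(1)$, destroying the $\e^{-1}E_\e$ Gr\"onwall scaling. (Incidentally you misdiagnose this: the paper \emph{does} incur an additive $O(1)$ in its final inequality, via the $L^4$ bound \eqref{L infinity bound1}, and that is perfectly harmless at the $\e^{-1}E_\e$ scale since $\e^{-1}E_\e(0)\lesssim 1$; what must be avoided is an additive $\e^{-1/2}$ or $\e^{-1}$, which is exactly what your estimate produces.) Your determinant/null-Lagrangian terms have the same defect: integrating them by parts against $\div\uu_\e$ regenerates $\nabla\div\uu_\e$, which is uncontrolled and sends you in a circle.

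The missing idea is the one the paper uses precisely here: integrate $-\mu\int(\div\uu_\e)(\rot\uu_\e)\rot\HH$ by parts once more on the $\rot$ operator, use \eqref{Ginzburg-Landau} to replace $\mu\nabla\div\uu_\e$ by $\p_t\uu_\e-\Delta\uu_\e+\e^{-2}\p F(\uu_\e)$, and observe that everything is paired with $\uu_\e^\perp$, so $\p F(\uu_\e)\cdot\uu_\e^\perp=0$, $\Pi_{\uu_\e}\p_k\uu_\e\cdot\uu_\e^\perp=0$, and $\p_k\uu_\e\cdot\p_k\uu_\e^\perp=0$. After a further integration by parts only the combinations $\p_t\uu_\e+(\HH\cdot\nabla)\uu_\e$ (the good dissipation) and $\nabla\uu_\e-\Pi_{\uu_\e}\nabla\uu_\e$ (controlled by \eqref{energy bound0}) remain, paired against $\uu_\e^\perp$, which is closed using the uniform $L^4$ bound \eqref{L infinity bound1} from Lemma~\ref{L infinity bound}. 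Without this step there is no way to control $\rot\uu_\e$ or $\nabla\div\uu_\e$, and the Gr\"onwall closure fails.
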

   It is worth mentioning   that $C_0$ is independent of $\mu$.
The proof of \eqref{energy bound4} relies on the following lemma.
\begin{lemma}\label{tan est of Q2 lemma}
 For any function  $\eta_1$ with  $\eta_1(\cdot,t)\in C_c(B_{4\delta_0}(I_t);\R_{\geq 0})$, there exists a universal constant $C>0$ which is independent of $t$ and $\e $  such that  
 \begin{align}
   \int_\O \eta_1\Big|   \nabla \uu_\e  \(\mathbb{I}_3-\nn\otimes \nn  \) \Big|^2\, dx\leq C  \e^{-1}
E_\e  [ \uu_\e   | I](t)\qquad \forall t\in [0,T].\label{tan est of Q2}
\end{align}
\end{lemma}
\begin{proof}
 On  the set $\{ x\mid g(|\uu_\e|)>0\}=\{ x\mid |\uu_\e|\notin \{ 0, 1\}\}$   we can use   \eqref{projection} and   \eqref{projectionnorm} to estimate 
\begin{align*}
&\Big| \Pi_{\uu_\e  } \nabla \uu_\e    ( {\mathbb{I}_3}-   \nn_\e\otimes\bxi ) \Big|^2\\
=&\,\left|\frac{|\nabla\psi_\e |} {|  D \dd^F    (\uu_\e  )|^2}  D \dd^F    (\uu_\e  )\otimes (\nn_\e -\bxi )\right|^2 \\
 \leq & \,  {| \nn_\e-\bxi |^2}\left|  \Pi_{\uu_\e  } \nabla \uu_\e  \right|^2\\
\leq  &\,  2(1-\bxi \cdot\nn_\e )\left|   \nabla \uu_\e  \right|^2.
\end{align*}
   On   the set  $\{ x\mid  |\uu_\e |=0\}$  we have  $\Pi_{\uu_\e  } \nabla \uu_\e=0$ by the second case  in   \eqref{projection1}. On the open set  $   \{  x\mid |\uu_\e|>0\}\supset \{  x\mid |\uu_\e|=1\}$  we can write $\Pi_{\uu_\e  } \nabla \uu_\e=  \nabla |\uu_\e| \otimes \frac{\uu_\e}{|\uu_\e|}$ by the first case in  \eqref{projection1}.  This combined with  
 \cite[Theorem 4.4]{MR3409135} implies   that  $\Pi_{\uu_\e  } \nabla \uu_\e=0$ for a.e. $x\in \{  x\mid |\uu_\e|=1\}$. Altogether we have shown that    
 \[\label{tangentestpointwise}
\begin{split}
 \Big| \Pi_{\uu_\e  } \nabla \uu_\e    ( {\mathbb{I}_3}-   \nn_\e\otimes\bxi ) \Big|^2
\leq   \,  2(1-\bxi \cdot\nn_\e )\left|   \nabla \uu_\e  \right|^2\quad \text{ a.e. in }\O.
\end{split}
\]
 This together with   \eqref{energy bound1} implies 
 \[\label{tan est of Q}
   \int_\O \Big| \Pi_{\uu_\e  } \nabla \uu_\e    (\mathbb{I}_3-   \nn_\e\otimes\bxi ) \Big|^2\, dx \leq C \e^{-1}
E_\e  [ \uu_\e   | I].\]
In $B_{4\delta_0}(I_t)$ where $\nn=\nabla d_I$, we have   the decomposition  
 \[\mathbb{I}_3-\nn_\e\otimes \nn =\mathbb{I}_3-  \nn_\e\otimes \bxi+  \nn_\e\otimes  ( \bxi -\nn).\]
Using \eqref{def:xi} and \eqref{phi func control}, we can estimate the last term by 
\begin{align}
&~| \bxi-\nn |^2= |\nn_\e\otimes  ( \bxi -\nn)|^2\nonumber\\
&\leq 2|  \bxi-\nn |=2\(1-\phi(\tfrac{d_I}{\delta_0})\)\leq C \min \(d^2_I ,1\).\label{diff normal d2}
\end{align}
 These inequalities  and   \eqref{energy bound3} lead to  
\[  \int_\O   \eta_1\Big| \Pi_{\uu_\e  } \nabla \uu_\e    (\mathbb{I}_3-   \nn_\e\otimes\nn  )  \Big|^2\, dx\leq C   \e^{-1}
E_\e  [ \uu_\e   | I].\label{tan est of Q1}\]
Now using  \eqref{diff normal d2}, \eqref{energy bound1}  and   \eqref{energy bound3} we find 
 $$ \int_\O   \eta_1 | \nabla \uu_\e |^2 \Big( |\nn_\e-\bxi|^2+|\bxi-\nn|^2\Big)\, dx \leq C   \e^{-1}
E_\e  [ \uu_\e   | I].$$
\blue{The above two estimates together with   the    formula  
$$(\mathbb{I}_3-\nn\otimes \nn)-(\mathbb{I}_3-\nn_\e\otimes \nn)=(\nn_\e-\bxi)\otimes \nn+(\bxi-\nn)\otimes \nn $$
yield   \eqref{tan est of Q2}.}
 \end{proof}
To proceed we need    an $L^3$-estimate  of $\uu_\e$. 

\begin{lemma}\label{L infinity bound}
  Under the assumption \eqref{bdd1},  there exists a constant $C=C(c_1)>0$  such that
\begin{subequations}
\begin{align}
&  \sup_{t\in [0,T]} A_\e(\uu_\e(\cdot,t)) +   \sup_{t\in [0,T]} \|\nabla\psi_\e(\cdot, t)\|_{L^1(\O) } \leq   C,\label{nablapsiest}\\
\label{L infinity bound1}
&  \sup_{t\in [0,T]} \|\uu_\e (\cdot, t) \|_{L^3(\Omega)}\leq ~  C.\end{align}
\end{subequations}
\end{lemma}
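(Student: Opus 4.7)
The plan is to extract both bounds from the energy dissipation law \eqref{dissipation} and a Sobolev embedding applied to the auxiliary function $\psi_\e$. First, since $A_\e$ is dissipated along the flow and $A_\e(\uu_\e^{in})\leq c_1$, we immediately obtain $\sup_{t\in[0,T]} A_\e(\uu_\e(\cdot,t))\leq c_1$.

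For the $L^1$-bound on $\nabla\psi_\e$, I would use the chain rule \eqref{ADM chain rule} together with the pointwise identity $|\p d_F(\uu)|=g(|\uu|)=\sqrt{2F(\uu)}$ coming from \eqref{df def} and \eqref{bulk}. This gives
\[
|\nabla\psi_\e|\leq |\nabla\uu_\e|\,g(|\uu_\e|)=|\nabla\uu_\e|\sqrt{2F(\uu_\e)}\leq \frac{\e}{2}|\nabla\uu_\e|^2+\frac{1}{\e}F(\uu_\e),
\]
so that $\|\nabla\psi_\e(\cdot,t)\|_{L^1(\O)}\leq A_\e(\uu_\e(\cdot,t))\leq c_1$, yielding \eqref{nablapsiest}.

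For the $L^4$-estimate, the idea is to transfer growth information about $\uu_\e$ to $\psi_\e$. Since $\uu_\e=0$ on $\p\O$ by \eqref{bc of omega}, we have $\psi_\e=0$ on $\p\O$, and the standard 2D Gagliardo--Nirenberg--Sobolev embedding $W^{1,1}_0(\O)\hookrightarrow L^2(\O)$ gives $\|\psi_\e(\cdot,t)\|_{L^2(\O)}\lesssim \|\nabla\psi_\e(\cdot,t)\|_{L^1(\O)}\leq Cc_1$. From the growth condition \eqref{growth f} we deduce $g(s)\geq 2c_0 s$ for $s\geq 1$, and hence for $|\uu_\e|\geq 1$,
\[
\psi_\e=\int_0^{|\uu_\e|}g(s)\sd s\geq \int_1^{|\uu_\e|}2c_0 s\sd s=c_0(|\uu_\e|^2-1),
\]
so that $|\uu_\e|^4\leq 2c_0^{-2}\psi_\e^2+2$ on $\{|\uu_\e|\geq 1\}$. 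Splitting $\O$ into $\{|\uu_\e|<1\}$ and $\{|\uu_\e|\geq 1\}$ and integrating, we get $\|\uu_\e(\cdot,t)\|_{L^4(\O)}^4\leq 3|\O|+2c_0^{-2}\|\psi_\e(\cdot,t)\|_{L^2(\O)}^2\leq C$, which is \eqref{L infinity bound1}.

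The only mildly delicate step is the $L^4$-bound, where one has to invoke the correct Sobolev embedding in 2D and use the quadratic lower bound on $f$ at infinity supplied by \eqref{growth f}; the rest is dissipation plus Young's inequality.
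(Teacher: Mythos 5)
Your proof is correct and follows essentially the same route as the paper: bound $A_\e$ by dissipation, bound $\|\nabla\psi_\e\|_{L^1}$ by $A_\e$ via Cauchy--Schwarz/Young, then use $\psi_\e|_{\p\O}=0$ with the 2D embedding $W^{1,1}_0\hookrightarrow L^2$ and the quadratic growth \eqref{growth f} to control $\|\uu_\e\|_{L^4}$. The only cosmetic difference is that you split at $|\uu_\e|\geq 1$ rather than $|\uu_\e|>2$, and you work with $|\nabla\uu_\e|$ where the paper uses the (slightly sharper but unnecessary) projection $|\Pi_{\uu_\e}\nabla\uu_\e|$; both give the same conclusion.
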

\begin{proof} 
It follows from   {\eqref{Dddfequ}}, \eqref{projectionnorm} and the Cauchy--Schwarz inequality   that 
$$A_\e(\uu_\e)\geq \int_\O \(\frac \e 2|\Pi_{\uu_\e} \nabla \uu_\e|^2+\frac 1{2\e} |D \dd^F(\uu_\e)|^2\)\, dx\geq \int_\O |\nabla\psi_\e|\, dx.$$
This and \eqref{dissipation} lead to   \eqref{nablapsiest}.
To prove \eqref{L infinity bound1}, we first note that  if $|\uu_\e|> 2$, then    
$$\psi_\e =\int_0^2g(z) \, dz+\int_2^{|\uu_\e|}g(z) \, dz\overset{\eqref{growth f}}\geq   c_0  (|\uu_\e|^2-4).$$
This combined with Sobolev's embedding   and $\psi_\e|_{\p\O}=0$ (cf.  \eqref{bc of omega}) leads to 
\begin{align*}
\int_\O |\uu_\e|^3\, dx &\leq C+\int_{\{x\in \O\mid  |\uu_\e|> 2\}}  |\uu_\e|^3\, dx\nonumber\\
 &\leq C\(1+ \|\psi_\e\|^{3/2}_{L^{3/2}(\O)}\)\nonumber\\
 &\leq C\(1+ \|\nabla \psi_\e\|^{3/2}_{L^1(\O)}\).
\end{align*}

 \end{proof}

\begin{proof}[Proof of Theorem \ref{thm close energy with div}]
We shall only present  the proof in 3D because   the  2D case  is  analogous under the conventions made in Subsection \ref{subsecnotation}.
We shall   employ    Einstein summation notation  by summing over repeated Latin indices.

We first use   \eqref{gronwall} to get 
\begin{align}\label{energy1}
	\frac 2 \e\frac{d}{d t}  E_\e  [ \uu_\e   | I] &+\frac 1{\e^2 }\int_\O \left[\Big(\e ^2 \left| \p_t \uu_\e    \right|^2-|\HH_\e |^2\Big)+\Big| \HH_\e -\e   |\nabla \uu_\e  |\HH \Big|^2\right]\,dx\nonumber\\
	&+\frac 1{\e^2 }\int_\O \left| \e \p_t \uu_\e    -  D \dd^F  (\uu_\e  )(\nabla\cdot  \bxi  )    \right|^2\,dx    \leq   \frac C \e E_\e  [ \uu_\e   | I].
	\end{align}
 Observe  that   the   orthogonal projection \eqref{projection1} is parallel to $D \dd^F(\uu_\e)$ when it does not vanish. So we can write 
\begin{align*}
&\left| \e \p_t \uu_\e    -  D \dd^F     (\uu_\e  )   (\nabla\cdot\bxi ) \right|^2\\=
&\left| \e \p_t \uu_\e    -\e \Pi_{\uu_\e  } \p_t \uu_\e   \right|^2+\left| \e \Pi_{\uu_\e  } \p_t \uu_\e    -  D \dd^F    (\uu_\e  )   (\nabla\cdot\bxi ) \right|^2.
\end{align*}
Substituting this identity  into  \eqref{energy1} we find 
\begin{align}\label{energy2}
 	 \frac 2 \e \frac{d}{d t} E_\e  [ \uu_\e   | I] &+\frac 1{\e^2 }\int_\O \left[\Big(\e ^2 \left| \p_t \uu_\e    \right|^2-|\HH_\e |^2\Big)+\Big| \HH_\e -\e   |\nabla \uu_\e  |\HH \Big|^2\right]\,dx\nonumber\\
	&+ \int_\O \left|\p_t \uu_\e  -\Pi_{\uu_\e  }\p_t \uu_\e  \right|^2\,dx    \leq   \frac C \e E_\e  [ \uu_\e   | I].   \end{align}
To estimate the second term on the left-hand side,  we   use  \eqref{Ginzburg-Landau} and    \eqref{mean curvature app} to write
\[ \HH_\e =-  \e \Big( \p_t \uu_\e  -\mu \nabla\div \uu_\e \Big)\cdot\frac{\nabla \uu_\e  }{|\nabla \uu_\e  |}\quad \text{ if } \nabla \uu_\e\neq 0.\] 
Note that  the inner product  is made with  the column  vectors of $\nabla \uu_\e=(\p_1 \uu_\e,\p_2 \uu_\e,\p_3 \uu_\e)$. Using the above formula, we    expand the integrands of \eqref{energy2} and find  
\begin{align*}
&~\e ^2 \left| \p_t \uu_\e    \right|^2-|\HH_\e |^2+  \Big| \HH_\e -\e   |\nabla \uu_\e  | \HH\Big|^2\\
=&~\e ^2 \left| \p_t \uu_\e    \right|^2+\e ^2 |\HH|^2 |\nabla \uu_\e  |^2+2\e ^2 \p_t \uu_\e \cdot (\HH\cdot \nabla) \uu_\e  \\
& \qquad -2\e ^2\mu ~ \nabla (\div \uu_\e)  \cdot(\HH\cdot\nabla ) \uu_\e   \\
= &~\e ^2|\p_t \uu_\e  +(\HH \cdot\nabla) \uu_\e  |^2+\e^2  \( |\HH|^2 |\nabla \uu_\e |^2- |(\HH \cdot\nabla) \uu_\e  |^2 \)  \\
&\qquad-2\e ^2\mu ~\nabla (\div \uu_\e)\cdot (\HH\cdot\nabla ) \uu_\e.
\end{align*}
Note that the second term in the last display is non-negative due to Cauchy-Schwarz's inequality, and  this  implies that  
\begin{align*}
& \int_\O  |\p_t \uu_\e  +(\HH \cdot\nabla) \uu_\e  |^2 \, dx\nonumber\\
\leq &  \frac 1{\e ^2} \int_\O  \left[\Big(\e ^2 \left| \p_t \uu_\e    \right|^2-|\HH_\e |^2\Big)+  \Big| \HH_\e -\e   |\nabla \uu_\e| \HH  \Big |^2\right]\, dx\nonumber\\
&\qquad  +2\mu \int_\O   \nabla (\div \uu_\e)\cdot (\HH\cdot\nabla ) \uu_\e\, dx. \end{align*}
Adding the above inequality to \eqref{energy2} leads to 
\begin{align}\label{lower bound relative2}
 	2\e^{-1} \frac{d}{d t} E_\e  [ \uu_\e   | I] &+\int_\O  \Big|\p_t \uu_\e  +(\HH \cdot\nabla) \uu_\e  \Big|^2 \, dx + \int_\O \Big|\p_t \uu_\e  -\Pi_{\uu_\e  }\p_t \uu_\e  \Big|^2\,dx  \nonumber \\
	 &\leq C \e^{-1} E_\e  [ \uu_\e   | I]+2\mu \int_\O   \nabla (\div \uu_\e)\cdot (\HH\cdot\nabla ) \uu_\e \, dx.   \end{align}
To estimate the last  term,  we write    $\uu_\e=(u^\e_i)_{1\leq i\leq 3}$ and $\HH=(H_i)_{1\leq i\leq 3}$.
Using integration by parts and   \eqref{bc n and H}, we obtain 
 \[\label{control is div2}\begin{split}
& \int_\O \nabla (\div \uu_\e)\cdot (\HH\cdot\nabla ) \uu_\e\, dx \\
= & - \int_\O    (\div \uu_\e) (\HH\cdot\nabla ) \div \uu_\e  \, dx- \int_\O    (\div \uu_\e)(\p_j \HH\cdot\nabla ) u^\e_j\, dx\\
=&\frac 12 \int_\O    (\div \HH )  (\div \uu_\e)^2\, dx-\int_\O (\div \uu_\e)  \p_k H_j \p_k u^\e_j\, dx \\
&\qquad -\int_\O (\div \uu_\e) (\p_j H_k-\p_k H_j) \p_k u^\e_j \, dx.
\end{split}\]
In view of \eqref{energy bound0}, the first integral in the last  display  of  \eqref{control is div2}  is bounded by $$ \mu^{-1} \e ^{-1} \|\div \HH\|_{L^\infty_{t,x}}E_\e  [ \uu_\e   | I].$$
The second  integral  can be estimated  by decomposing  $\nabla u^\e_j$ and   by using \eqref{normal H}:
\begin{align} \label{tan est1}
&-\int_\O (\div \uu_\e)  \,\nabla H_j\cdot \nabla u^\e_j\,dx\nonumber\\
= & -\int_\O (\div \uu_\e)  \,\nabla H_j\cdot \Big((\mathbb{I}_3-\nn\otimes \nn )  \nabla u^\e_j\Big)\,dx-\int_\O (\div \uu_\e)  \, (\nn \cdot \nabla   H_j)  \( \nn  \cdot \nabla u^\e_j\)\,dx\nonumber\\
 \leq  &  \int_\O |\div \uu_\e|^2\,dx +   \int_\O |\nabla\HH|^2 \Big|(\mathbb{I}_3-\nn\otimes \nn )   {\nabla \uu_\e}\Big|^2\,dx\nonumber\\
&\quad +C \int_\O |\nabla \uu_\e|^2 \min\(d^2_I ,1\)\,dx.
\end{align}
By \eqref{def:H}  and \eqref{cut-off eta delta}, the second integral in the last display can be estimated using   \eqref{tan est of Q2}  with $\eta_1:=|\nabla\HH|^2$. The other two terms can be controlled by $ {(\mu^{-1} +1)}C\e ^{-1} E_\e  [ \uu_\e   | I]$ using \eqref{energy bound0} and \eqref{energy bound3} respectively. To summarize we deduce from    \eqref{control is div2} and \eqref{tan est1} that 
\[
\begin{split}
& \int_\O \nabla (\div \uu_\e)\cdot (\HH\cdot\nabla ) \uu_\e\, dx \\
\leq &~\mu^{-1} \e ^{-1} \|\div \HH\|_{L^\infty_{t,x}}E_\e  [ \uu_\e   | I]+ (\mu^{-1} +1)C\e ^{-1} E_\e  [ \uu_\e   | I]  \nonumber\\
&   -\int_\O (\div \uu_\e) (\p_j H_k-\p_k H_j) \p_k u^\e_j \, dx.
\end{split}\]
Combining this with \eqref{lower bound relative2}, we find 
\begin{align}\label{lower bound relative3}
 	2\e^{-1} \frac{d}{d t} E_\e  [ \uu_\e   | I] &+\int_\O  \Big|\p_t \uu_\e  +(\HH \cdot\nabla) \uu_\e  \Big|^2 \, dx + \int_\O \Big|\p_t \uu_\e  -\Pi_{\uu_\e  }\p_t \uu_\e  \Big|^2\,dx  \nonumber \\&\leq C \e^{-1} E_\e  [ \uu_\e   | I]-2\mu\int_\O (\div \uu_\e) (\p_j H_k-\p_k H_j) \p_k u^\e_j\,dx.\end{align}
	Note that due to \eqref{conventionmu} the constant $C$ above can be made  independent of $\mu$.
It remains to estimate  the last integral in \eqref{lower bound relative3}.
By    orthogonal  decompositions\footnote{For a  square matrix $A$, the decomposition $A=\frac{A+A^{\mathsf{T}}}2+\frac{A-A^{\mathsf{T}}}2$  is orthogonal under  the Frobenius inner product $A:B\triangleq \tr (A^{\mathsf{T}} B)$.},
$$  (\p_j H_k-\p_k H_j) \p_k u^\e_j=-( \rot \uu_\e ) \cdot(\rot \HH).$$  
We also need the following identity which follows by taking the wedge product of   \eqref{Ginzburg-Landau} with $\uu_\e$.
$$\mu (\nabla \div \uu_\e)  \wedge    \uu_\e= (\p_t \uu_\e- \Delta \uu_\e)  \wedge     \uu_\e.$$
Using the above two identities, we   integrate by parts  to obtain  
\begin{align*}
&-\mu\int_\O (\div \uu_\e) (\p_j H_k-\p_k H_j) \p_k u^\e_j\,dx\\
=&~\mu\int_\O (\div \uu_\e) ( \rot \uu_\e ) \cdot (\rot \HH)\,dx\\
 =&~\mu\int_\O  (\div \uu_\e)     \uu_\e \cdot (\rot  \rot \HH)\,dx-\int_\O \mu (\nabla \div \uu_\e)  \wedge    \uu_\e  \cdot (\rot \HH)\,dx\\
=&~\mu\int_\O  (\div \uu_\e)     \uu_\e  \cdot (\rot  \rot \HH)\,dx -\int_\O (\p_t \uu_\e- \Delta \uu_\e)  \wedge     \uu_\e   \cdot (\rot \HH )\,dx\\
=& ~\mu\int_\O  (\div \uu_\e)     \uu_\e \cdot  (\rot  \rot \HH)\,dx-\int_\O  \Big(\p_t \uu_\e +(\HH \cdot\nabla) \uu_\e\Big) \wedge   \uu_\e  \cdot ( \rot \HH)\,dx\\
&+\int_\O   (\HH \cdot\nabla) \uu_\e   \wedge    \uu_\e \cdot( \rot \HH)\,dx+\int_\O   \Delta \uu_\e   \wedge     \uu_\e  \cdot (\rot \HH)\,dx.
\end{align*}
 {Inserting} this identity  into \eqref{lower bound relative3}, and using  the Cauchy--Schwarz  inequality, \eqref{L infinity bound1} and   \eqref{energy bound0}, we find  
\begin{align}\label{lower bound relative4}
 	&2\e^{-1} \frac{d}{d t} E_\e  [ \uu_\e   | I] +\frac 12 \int_\O  \Big|\p_t \uu_\e  +(\HH \cdot\nabla) \uu_\e  \Big|^2 \, dx + \int_\O \Big|\p_t \uu_\e  -\Pi_{\uu_\e  }\p_t \uu_\e  \Big|^2\,dx  \nonumber \\
	 & \leq C\Big(1+ \e^{-1} E_\e  [ \uu_\e   | I]\Big)+2\int_\O   (\HH \cdot\nabla) \uu_\e   \wedge   \uu_\e \cdot  (  \rot \HH)\,dx+2\int_\O   \Delta \uu_\e   \wedge     \uu_\e \cdot  (\rot \HH) \,dx\nonumber\\
	  & =  C\Big(1+ \e^{-1} E_\e  [ \uu_\e   | I]\Big)+2\int_\O    H_k    \Big(\p_k \uu_\e-\Pi_{\uu_\e}\p_k  \uu_\e\Big)   \wedge     \uu_\e \cdot (  \rot \HH)\,dx\nonumber\\
	  &\qquad\qquad   -2\int_\O  \Big(\p_k  \uu_\e-\Pi_{\uu_\e} \p_k \uu_\e\Big)   \wedge     \uu_\e \cdot \Big(\p_k \rot \HH\Big)\,dx.
	 \end{align}
	Note that in the last step we    used  integration by parts, the identity 
	\[(\Pi_{\uu_\e}\p_k  \uu_\e)\wedge  \uu_\e  =0\label{theidentity} \]
	  which follows from \eqref{projection1}, and the identities $( \p_k \uu_\e )   \wedge (   \p_k \uu_\e)= 0$ for each fixed $k\in \{1,2,3\}$.    Finally, applying  the Cauchy--Schwarz inequality  and then \eqref{energy bound0} and  \eqref{L infinity bound1} in   the last two integrals of \eqref{lower bound relative4}, we find 
	  \begin{align}
 	2\e^{-1} &\frac{d}{d t} E_\e  [ \uu_\e   | I] +\frac 12 \int_\O  \Big|\p_t \uu_\e  +(\HH \cdot\nabla) \uu_\e  \Big|^2 \, dx +  \int_\O \Big|\p_t \uu_\e  -\Pi_{\uu_\e  }\p_t \uu_\e  \Big |^2\,dx  \nonumber \\
	  &\leq C\Big(  1+ \e^{-1} E_\e  [ \uu_\e   | I]\Big).  	 \end{align}
This combined with \eqref{initial} and    Gr\"{o}nwall's inequality leads to   \eqref{energy bound4}.   \end{proof}
 Using \eqref{energy bound3} and \eqref{energy bound4}, we readily   obtain the following  {corollary.}
 \begin{coro}\label{coro space-time der bound}
 Under the assumptions of Theorem \ref{main thm}, there exists a  constant $C>0$,  which depends only  on the geometry of the  interface \eqref{interface} and $c_1$, such that   
 \begin{subequations}
 \begin{align}
\sup_{t\in  [0,T]}\int_{\O^\pm_t\backslash B_{\delta}(I_t)}\(|\nabla \uu_\e  |^2+\frac 1{\e^2} 
F(\uu_\e  )+\frac 1\e |\nabla\psi_\e| \)\, dx  & \leq C\delta^{-2} ,\label{space der bound local}\\
\int_0^T\int_{\O^\pm_t\backslash B_{\delta}(I_t)} |\p_t \uu_\e  |^2\, dx dt &\leq C\delta^{-2},\label{time der bound local}
\end{align}
 \end{subequations}
  hold  for each fixed $\delta\in (0, \delta_0)$.
 \end{coro}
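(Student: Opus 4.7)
The plan is to obtain both bounds of Corollary \ref{coro space-time der bound} directly from Theorem \ref{thm close energy with div} together with the weighted coercivity estimate \eqref{energy bound3} from Lemma \ref{lemma:energy bound}. The key observation is that on the region away from the interface, the weight $\min(d_I^2,1)$ is bounded below by a positive constant depending on $\delta$, which allows one to trade the weight for a factor of $\delta^{-2}$.

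For the first estimate \eqref{space der bound local}, I would note that for any $x \in \Omega^\pm_t \setminus I_t(\delta)$, we have $|d_I(x,t)| \geq \delta$, and since $\delta \in (0,4\delta_0) \subset (0,1)$ (assuming $\delta_0$ is small), we get the pointwise lower bound
\[
\min(d_I^2(x,t),1) \geq \delta^2 \quad \text{on } \Omega^\pm_t \setminus I_t(\delta).
\]
Inserting this lower bound on the left-hand side of \eqref{energy bound3} and using that $E_\e[\uu_\e|I](t) \leq C\e$ uniformly in $t$ by \eqref{energy bound4}, one obtains
\[
\delta^2 \int_{\Omega^\pm_t \setminus I_t(\delta)} \Bigl(\tfrac{\e}{2}|\nabla \uu_\e|^2 + \tfrac{1}{\e}F(\uu_\e) + |\nabla\psi_\e|\Bigr)\, dx \leq C\e.
\]
Dividing by $\e \delta^2$ yields exactly \eqref{space der bound local}.

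For the time derivative estimate \eqref{time der bound local}, I would use the triangle inequality to split
\[
|\p_t \uu_\e|^2 \leq 2\bigl|\p_t \uu_\e + (\HH\cdot\nabla)\uu_\e\bigr|^2 + 2|\HH|^2|\nabla\uu_\e|^2.
\]
The space-time integral of the first term over all of $\Omega \times (0,T)$ is already bounded by a constant thanks to \eqref{energy bound4}. For the second term, $\HH$ is globally bounded by construction \eqref{def:H}, and integrating the already-established estimate \eqref{space der bound local} in time gives
\[
\int_0^T \int_{\Omega^\pm_t \setminus I_t(\delta)} |\HH|^2 |\nabla\uu_\e|^2 \, dx\, dt \leq C T \delta^{-2}.
\]
Adding the two contributions and absorbing $T$ into the generic constant produces \eqref{time der bound local}.

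There is no substantive obstacle here: the corollary is a direct book-keeping consequence of Theorem \ref{thm close energy with div}, and the only small point to verify is the elementary lower bound $\min(d_I^2,1) \geq \delta^2$ on the excluded region, which is immediate. The decomposition of $\p_t \uu_\e$ exploits precisely the combination $\p_t \uu_\e + (\HH\cdot\nabla)\uu_\e$ whose $L^2_{t,x}$ norm is already controlled by the left-hand side of \eqref{energy bound4}, so no new estimate is required.
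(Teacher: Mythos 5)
Your proposal matches the paper's argument exactly: \eqref{space der bound local} follows from combining the weighted coercivity \eqref{energy bound3} (using $\min(d_I^2,1)\gtrsim\delta^2$ off $I_t(\delta)$) with the uniform bound $E_\e[\uu_\e|I]\leq C\e$ from \eqref{energy bound4}, and \eqref{time der bound local} follows from the decomposition $|\p_t\uu_\e|^2\lesssim|\p_t\uu_\e+(\HH\cdot\nabla)\uu_\e|^2+|\HH|^2|\nabla\uu_\e|^2$ together with \eqref{energy bound4} and the just-established \eqref{space der bound local}. This is precisely the reasoning the paper sketches after stating the corollary.
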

 Indeed,  \eqref{time der bound local} follows from  \eqref{space der bound local} and the inequality
  \[\int_0^T\int_\Omega   \Big|\p_t \uu_\e  +(\HH \cdot\nabla) \uu_\e  \Big|^2\, dxdt\leq C,\label{weaktransport}\]
 which is a consequence of  \eqref{energy bound4}.
Another consequence of \eqref{energy bound4} is the following lemma concerning  
\begin{align}\label{def uhat}
\widehat{\uu}_\e:=\begin{cases}
\frac{\uu_\e}{|\uu_\e|}&\text{ if } \uu_\e\neq 0,\\
0& \text{otherwise}.
\end{cases}
\end{align}

 \begin{lemma}
 Under the assumptions of Theorem \ref{main thm}, there exists a  constant $C>0$, which depends only  on the geometry of the  interface \eqref{interface} and $c_1$, such that 
\begin{subequations}
\begin{align}
&\sup_{t\in [0,T]}\int_\O |\uu_\e|^2 \left| \nabla \widehat{\uu}_\e\right|^2\, dx+ \sup_{t\in [0,T]}\int_\O \Big|  \widehat{\uu}_\e\cdot \nabla |\uu_\e| \Big|^2\, dx\leq  (1+\mu^{-1})C   \label{bound degree+orien},\\
&\sup_{t\in [0,T]}\int_\O \(\widehat{\uu}_\e  \cdot \nn_\e\)^2 |\nabla \psi_\e|\, dx\leq  (1+\mu^{-1})(1+\sqrt{\mu+1}) C\e.  \label{cos law}
\end{align}
\end{subequations}

\end{lemma}
\begin{proof}
We first  deduce   from \eqref{energy bound4} and \eqref{energy bound0} that 
 \[\sup_{t\in [0,T]} \int_\O \Big(  \mu |\div \uu_\e |^2+\left|\nabla \uu_\e  -\Pi_{\uu_\e  }\nabla \uu_\e  \right|^2   \Big)\, d x\leq C.\label{energy bound7}\]
By \eqref{def uhat}  we have the identity    $\uu_\e=|\uu_\e|\widehat{\uu}_\e$. Using this and   \eqref{projection1}, we can write
\[\nabla \uu_\e  -\Pi_{\uu_\e  }\nabla \uu_\e=|\uu_\e| \nabla\widehat{\uu}_\e~  \text{ if }\uu_\e\neq 0.\] 
Substituting this formula into \eqref{energy bound7}, we obtain  the estimate of the first   integral on the left-hand side of \eqref{bound degree+orien}. To control  the second one,   we use the following formula which follows from  \eqref{projection1}:
  \[\tr \nabla \uu_\e-\tr \(\Pi_{\uu_\e  }\nabla \uu_\e\)=\div \uu_\e-\widehat{\uu}_\e\cdot \nabla |\uu_\e|~  \text{ if }\uu_\e\neq 0.\] 
   Note that on the set  $\{ x\mid  |\uu_\e|=0\}$, we have  $\nabla |\uu_\e|=0$ a.e., and thus the above formula is  still valid.
This   and   \eqref{energy bound7} yield the estimate of $\widehat{\uu}_\e\cdot \nabla |\uu_\e|$ and     \eqref{bound degree+orien} is proved.

Regarding  \eqref{cos law},  it suffices to estimate over the set  $$\{ x\mid \nabla\psi_\e\neq 0\}=:U_\e$$  because  the integral over its complement vanishes. By \eqref{df def} and  \eqref{ADM chain rule}, we have 
$U_\e\subset \{ x\mid |\uu_\e|\notin  \{0,1\}\}$ where  $g(|\uu_\e|)=|D \dd^F|(\uu_\e)>0$. This combined with   \eqref{ADM chain rule1} and   \eqref{normal diff} implies that   
$$\nn_\e=\frac{\nabla\psi_\e}{|\nabla\psi_\e|}=\frac{\nabla |\uu_\e|}{|\nabla |\uu_\e||}\quad \text{ on } ~U_\e.$$
On the other hand, by the polar decomposition $\uu_\e=|\uu_\e|\widehat{\uu}_\e$ and orthogonality
    $\hat{\uu}_\e\perp \p_{x_j} \hat{\uu}_\e$, we have     
\[|\nabla \uu_\e|^2=|\nabla |\uu_\e| |^2+|\uu_\e|^2|\nabla\hat{\uu}_\e|^2\geq |\nabla |\uu_\e| |^2\quad \text{ on } ~U_\e.\label{refereestar1}\]
Setting   $\widehat{\uu}_\e  \cdot \nn_\e=:\cos\theta_\e$,    we have 
  \[\mu \int_{U_\e} \cos^2 \theta_\e \big|\nabla |\uu_\e|\big|^2\, dx=\mu \int_{U_\e} \left|  \widehat{\uu}_\e \cdot \nn_\e \right|^2 \big|\nabla |\uu_\e|\big|^2\, dx\overset{\eqref{bound degree+orien}}\leq  (1+\mu) C. \]
 This inequality,  \eqref{energy bound-1} and \eqref{refereestar1} together imply  that 
\begin{align*}
 {(1+\mu)}C  &\geq  \int_{U_\e} \frac \mu 2 \cos^2 \theta_\e \big|\nabla |\uu_\e|\big|^2\, dx+\int_{U_\e} \(\frac{1}{2} \Big|\nabla |\uu_\e|  \Big|^2+\frac{1}{\e^2 } F  (\uu_\e  )-\frac 1{\e}|\nabla \psi_\e | \) \, d x \\
&\geq \frac 1{\e} \int_{U_\e}  \(\sqrt{ \mu\cos^2 \theta_\e+1}   \,\,\Big|\nabla |\uu_\e|\Big|\sqrt{2F  (\uu_\e  )}- |\nabla \psi_\e | \)  \, d x \\
&= \frac 1{\e} \int_{U_\e}  \(\sqrt{\mu \cos^2 \theta_\e+1}  -1\)  |\nabla \psi_\e |   \, d x.
\end{align*}
Note that in the last step we have used the identity   $\big|\nabla |\uu_\e|\big|\sqrt{2F  (\uu_\e  )}=|\nabla\psi_\e|$, which holds on $U_\e$. 
So   \eqref{cos law} follows from  conjugation.
\end{proof}

\section{Estimates of level sets}\label{sec level}
Recalling \eqref{normalization}, the main result of this section is the following $L^1$-estimate of $\psi_\e$.
\begin{theorem}\label{sharp l1}
Under the assumptions of Theorem \ref{main thm},
there exists $C>0$ independent of  $\e$ such  that 
\begin{align}
&\sup_{t\in [0,T]}B  [\uu_\e  | I](t)  \leq C\e,\label{sharpnew1}\\
&\sup_{t\in [0,T]}\int_{\O}| \psi_\e-\1_{\O_t^+}| \, dx \leq C\e^{1/4}.\label{volume convergence}
\end{align}
\end{theorem}
 
\begin{proof}
We shall   denote  the positive and  negative parts   of a function $h$ by $h^+$ and $h^-$ respectively.  For simplicity we shall suppress $\, dx$ in a volume integral.
 By \cite[pp. 153]{MR3409135},  for any $h\in W^{1,1}(\O)$, we have 
\[\p_i (h(x))^+= (\p_i h(x)) \1_{\{x\mid h(x)>0\}}(x)\quad \text{for } a.e.~~x\in \O.\label{der positive part}\]

Our goal is to estimate $2\psi_\e-1-\chi$ where $\chi(x,t)=\pm 1$ in $\O_t^\pm$.
Using the formula $h=h^+-h^-$, we can write 
 \[2\psi_\e-1=2(\psi_\e-1)^++ \(1 -2(\psi_\e-1)^-\), \label{psi deco}\]
 and we shall estimate its difference with $\chi$. This will be done by 
 establishing   differential inequalities   for  the following  energies which add  up to \eqref{gronwall2new}:
\begin{subequations} 
\begin{align}
g_\e(t):=&\int_\O  ( \psi_\e-1)^+\zeta\circ d_I ,\label{gronwall1}\\
h_\e(t):=&\int_\O   \Big(\chi-[1- 2(\psi_\e-1)^-] \Big)\eta\circ d_I ,\label{gronwall2}
\end{align}
\end{subequations}
where $\eta(z)$ is defined by \eqref{truncation eta} 
and   $ |\eta|(z)=:\zeta(z)$. It is obvious that  the integrand of   \eqref{gronwall1}    is  non-negative.
Since  $\psi_\e\geq 0$, we have $(\psi_\e-1)^-\in [0,1]$ and thus $[1 -2(\psi_\e-1)^-]$ ranges in $[-1,1]$.  
 Using   the identity $(\eta \circ d_I)~\chi=|\eta\circ d_I|$, we deduce that   the integrand of \eqref{gronwall2} is also  non-negative and 
 \[h_\e(t)=\int_\O \Big|1 -2(\psi_\e-1)^--\chi\Big| ~\zeta\circ d_I .\label{gronwall4}\]
 Finally, we deduce from  \eqref{initial} that 
    \begin{align}\label{gronwallinitial1}
      g_\e(0)+h_\e(0)\leq c_1\e.
    \end{align}

{\it Step 1: estimates of weighted errors.}
Using \eqref{psi} and \eqref{bulk}, we have
\begin{align}\label{volume evo1}
\p_t \psi_\e
=&\Big(\p_t \uu_\e+(\HH\cdot\nabla)\uu_\e\Big)\cdot\frac{\uu_\e}{|\uu_\e|} \sqrt{2F(\uu_\e)}-\HH\cdot\nabla\psi_\e.
\end{align}
Using this and \eqref{der positive part}  we can calculate 
\begin{align*}
g_\e'(t)
 =&\int_{\{ \psi_\e > 1\}} (\p_t \uu_\e+(\HH\cdot\nabla)\uu_\e)\cdot\frac{\uu_\e}{|\uu_\e|} \sqrt{2F(\uu_\e)}~\zeta\circ d_I \\
&-\int_{\{ \psi_\e > 1\}} \HH\cdot \nabla\psi_\e ~\zeta\circ d_I +\int_\O  ( \psi_\e-1)^+ \p_t(\zeta\circ d_I) \\
=&\int_{\{ \psi_\e > 1\}} (\p_t \uu_\e+(\HH\cdot\nabla)\uu_\e)\cdot\frac{\uu_\e}{|\uu_\e|} \sqrt{2F(\uu_\e)}~\zeta\circ d_I \\
&-\int_\O  \HH\cdot \nabla ( \psi_\e-1)^+ ~\zeta\circ d_I -\int_\O   ( \psi_\e-1)^+ \HH\cdot\nabla (\zeta\circ d_I) \\
&+\int_\O  \Big(\p_t  (\zeta\circ d_I)+\HH\cdot\nabla (\zeta\circ d_I)\Big)  ( \psi_\e-1)^+.\end{align*}
 By  \eqref{mcf}, the integrand of the last integral vanishes on $B_{\delta_0}(I_t)$. Moreover, we can      combine  the second and the third integrals in the last display  using  integration  by parts. Using also that $\|\div \HH \|_{L^\infty_{x,t}}\leq C$ and \eqref{energy bound3}, we find  
 \begin{align}\label{referee1}
g_\e'(t)
\leq &\int_{\{ \psi_\e > 1\}} (\p_t \uu_\e+(\HH\cdot\nabla)\uu_\e)\cdot\frac{\uu_\e}{|\uu_\e|} \sqrt{2F(\uu_\e)}~\zeta\circ d_I\nonumber \\
&+\int_\O  (\div \HH) ( \psi_\e-1)^+ ~\zeta\circ d_I  + C\int_{\O\backslash B_{\delta_0}(I_t)}  ( \psi_\e-1)^+\nonumber\\
 \leq & \int_\O \e \Big|\p_t \uu_\e+(\HH\cdot\nabla)\uu_\e\Big|^2+ \left(\int_\O   \frac 1{\e}{F(\uu_\e)}\zeta^2\circ d_I \right) +Cg_\e \nonumber \\
 \leq & C E_\e[\uu_\e |I]  +Cg_\e+ \int_\O \e \Big|\p_t \uu_\e+(\HH\cdot\nabla)\uu_\e\Big|^2.
\end{align}
Now using   \eqref{gronwallinitial1},  \eqref{weaktransport} and \eqref{energy bound4}, we can apply the Gr\"{o}nwall lemma and obtain $\sup_{t\in [0,T]}g_\e(t)\leq C \e$ for some $C$ which is independent of $\e$.  
Concerning   $h_\e$, for simplicity we introduce  $w_\e:=\chi-[1- 2(\psi_\e-1)^-]$. Using the identity $(\p_i \chi )~\eta \circ d_I \equiv  0$ (in the sense of  distribution), we find 
  \[(\p_i w_\e) ~\eta\circ d_I=(2\p_i \psi_\e)~ \1_{\{\psi_\e< 1\}}~\eta\circ d_I.\] So by the same  calculation for $g_\e$  we obtain
\begin{align*}
h_\e'(t)
=&\int_{\{ \psi_\e < 1\}} 2(\p_t \uu_\e+(\HH\cdot\nabla)\uu_\e)\cdot\frac{\uu_\e}{|\uu_\e|} \sqrt{2F(\uu_\e)}~\eta\circ d_I \\
&+\int_\O  (\div \HH)w_\e~\eta\circ d_I  +\int_\O  \Big(\p_t (\eta\circ d_I)+(\HH\cdot\nabla) \eta\circ d_I\Big)  w_\e\\
\leq & C E_\e[\uu_\e |I]  +Ch_\e(t)+\int_\O \e \Big|\p_t \uu_\e+(\HH\cdot\nabla)\uu_\e\Big|^2.\end{align*}
Using \eqref{gronwallinitial1} and   {\eqref{weaktransport}}, we can apply the Gr\"{o}nwall lemma and obtain  $\sup_{t\in [0,T]}h_\e(t)\leq C\e$. Finally, by \eqref{psi deco} and \eqref{gronwall4}, we find 
\begin{align}
&\int_\O |2\psi_\e-1-\chi|\zeta\circ d_I \nonumber\\
&  \leq   \int_\O 2(\psi_\e-1)^+\zeta\circ d_I  + \int_\O \Big|1 -2(\psi_\e-1)^--\chi\Big| \zeta\circ d_I \nonumber\\
 & =  2g_\e(t)+h_\e(t)\leq C\e\quad \text{ for all }t\in [0,T],\label{gronwall3}
\end{align}
 and this proves \eqref{sharpnew1}.
 
 {\it Step 2: remove the weight.}
 First note that \eqref{gronwall3} implies   $\eqref{volume convergence}$ with $\O$   replaced by  $\O\backslash B_{\delta_0}(I_t)$. So we shall focus on the estimate on  $B_{\delta_0}(I_t)$.
We    set $\chi_\e:=2\psi_\e-1$ and abbreviate $\delta_0$ by $\delta$. 
For fixed $t\in [0,T]$ and $p\in I_t$ with normal vector $\nn=\nn(p)$, applying H\"{o}lder's inequality and Lemma \ref{Lemma cubic est}  below  with $f(r,p,t)=\left|\chi\left(p + r \nn, t\right)-\chi_\e (p+ r \nn, t )\right|$, we find
\begin{align*}
& \(\int_{B_{\delta}(I_t)}\left|\chi(x,t)-\chi_\e(x, t)\right| \, dx\)^{4/3}  \\
=&    \(  \int_{I_t } \int_{-\delta}^{\delta}f(r,p,t) \,  {dr}  \,  d \mathcal{H}^{d-1}(p)\)^{4/3}  \\
 \leq  &  C   \int_{I_t } \( \int_{-\delta}^{\delta}f(r,p,t) \,  {dr}   \)^{4/3} d \mathcal{H}^{d-1}(p)  \\
\overset{\eqref{cubic est}}\leq & C \int_{I_t }\|f(\cdot,p,t)\|_{L^{3/2}(-\delta,\delta)}\(\int_{-\delta}^{\delta}f(r,p,t) |r| \, dr\)^{1/3} \, d\mathcal{H}^{d-1}(p) \\
 =~& C\|f(\cdot,t)\|_{L^{3/2}(B_{\delta}(I_t))}\(\int_{I_t }\int_{-\delta}^{\delta}f(r,p,t) |r| \, dr \, d\mathcal{H}^{d-1}(p))\)^{1/3}. 
\end{align*}
In view of \eqref{psi} and  \eqref{bc of omega},  we have \blue{$\psi_\e=0$ on $\p\O$}. So by   Sobolev's embedding $W^{1,1}\hookrightarrow L^{3/2}$ we obtain
\begin{align*}
& \(\int_{B_\delta(I_t)}\left|\chi(x,t)-\chi_\e(x, t)\right| \, dx\)^4  \\
 \leq  \,& \,C \( \|\chi\|^3_{L^{3/2}(\O)}+\|\chi_\e\|^3_{L^{3/2}(\O)}\) \int_{\O}\zeta\circ d_I~ |\chi_\e-\chi |   \, dx  \\
  \leq  \, & \,C (1+ \|\nabla \psi_\e\|^3_{L^1(\O)}) \int_{\O} \zeta\circ d_I ~|\chi_\e-\chi |   \, dx \leq C \e. 
\end{align*}
 Note that in the last step we employed \eqref{nablapsiest} and \eqref{gronwall3}.
This    gives the desired  estimate   in $B_{\delta_0}(I_t)$ and thus the proof of \eqref{volume convergence} is finished.  \end{proof}
\begin{lemma}\label{Lemma cubic est}
For any integrable function $f:[-\delta,\delta]\to \R_{\geq 0}$, we have 
\[\(\int_{-\delta}^\delta f(r)\, dr \)^4 \leq  {6}\|f\|^{3}_{L^{3/2}(-\delta,\delta)} \int_{-\delta}^\delta |r|   f(r)\, dr.\label{cubic est}
\]
\end{lemma}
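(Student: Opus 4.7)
The plan is to prove Lemma \ref{Lemma cubic est} by the standard splitting-and-optimization technique, which is the natural strategy for an interpolation-type inequality of this shape. For a threshold $\lambda\in(0,\delta]$ to be chosen, I would decompose
\[
\int_{-\delta}^{\delta} f(r)\, dr = \int_{|r|\leq \lambda} f(r)\, dr + \int_{\lambda<|r|\leq \delta} f(r)\, dr.
\]
The first piece is controlled by the Cauchy--Schwarz inequality: $\int_{|r|\leq\lambda} f\, dr \leq \sqrt{2\lambda}\,\|f\|_{L^2(-\delta,\delta)}$. On the second piece one uses the trivial inequality $1\leq |r|/\lambda$ to bring in the weight $|r|$, giving $\int_{\lambda<|r|\leq\delta} f\, dr \leq \lambda^{-1}\int_{-\delta}^{\delta}|r|\,f(r)\,dr$.

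Abbreviating $A:=\|f\|_{L^2(-\delta,\delta)}$ and $B:=\int_{-\delta}^{\delta}|r|\,f(r)\,dr$, the two bounds add to
\[
\int_{-\delta}^{\delta} f(r)\,dr \;\leq\; \sqrt{2\lambda}\,A + \lambda^{-1} B.
\]
Optimizing the right-hand side in $\lambda>0$ (the two terms balance at $\lambda_\ast\sim (B/A)^{2/3}$) produces $\int_{-\delta}^{\delta} f\,dr \lesssim A^{2/3}B^{1/3}$, and cubing yields exactly the desired inequality \eqref{cubic est}.

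The only genuine point to check is the constraint $\lambda\leq \delta$. If the unconstrained optimizer $\lambda_\ast$ exceeds $\delta$, then $B\gtrsim \delta^{3/2} A$, and the trivial bound $\int_{-\delta}^{\delta} f\,dr\leq \sqrt{2\delta}\,A$ combined with this relation already delivers $(\int f)^3\leq (2\delta)^{3/2}A^3\lesssim A^2 B$. The degenerate cases $A=0$ (forcing $f\equiv 0$) and $B=0$ (forcing $f$ to be concentrated at $r=0$, hence $\int f=0$ for integrable $f$) are immediate. I do not anticipate any real obstacle: this is a short, elementary weighted-Hardy style estimate whose constant $C$ is absolute.
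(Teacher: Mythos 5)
Your proof is correct, and it takes a genuinely different route from the paper's. You use the classical threshold-splitting trick: decompose $\int f$ at a scale $\lambda$, control the inner piece by Cauchy--Schwarz ($\sqrt{2\lambda}\,\|f\|_{L^2}$) and the outer piece by Chebyshev ($\lambda^{-1}\int |r|f$), then optimize in $\lambda$ to obtain the three-way interpolation $\|f\|_{L^1}\lesssim \|f\|_{L^2}^{2/3}\bigl(\int|r|f\bigr)^{1/3}$; cubing gives \eqref{cubic est}, and you correctly dispose of the boundary case $\lambda_\ast>\delta$ and the degenerate cases $A=0$, $B=0$. The paper instead works on $[0,\delta]$, writes $\|f\|_{L^1}^4$ as a fourfold product integral, symmetrizes to restrict to $\{x_1+x_2\leq y_1+y_2\}$, applies Cauchy--Schwarz to the inner $(x_1,x_2)$-integral (bounding the measure of the triangular region by $(y_1+y_2)^2/2$), and recognizes the resulting factor $\int(y_1+y_2)f(y_1)f(y_2)\,dy$ as $2\|f\|_{L^1}\int rf$; dividing by $\|f\|_{L^1}$ gives the cubic estimate. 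Both arguments are elementary with absolute constants; yours is shorter and more modular (the interpolation inequality is reusable as stated), while the paper's symmetrization avoids the optimization/case-splitting step at the cost of a less transparent starting point. Either proof is a valid substitute for the other.
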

\begin{proof} 
 We write  $x=(x_1,x_2,x_3),y=(y_1,y_2,y_3)$ and $F(x)=f(x_1)f(x_2)f(x_3)$.
 By symmetry and the H\"{o}lder inequality, we find 
\begin{align*}
\| f\|^6_{L^1(0,\delta)} 
&=
\int_{[0,\delta]^6} F(x)F(y)\, dxdy\\
&=2\int_{[0,\delta]^6\cap \{(x,y)~\mid x_1+x_2+x_3\leq y_1+y_2+y_3\}}  F(x)F(y)\, dxdy\\
&=2 \int_{[0,\delta]^3} \(\int_{[0,\delta]^3\cap \{x~\mid x_1+x_2+x_3\leq y_1+y_2+y_3\}} 1\cdot  F(x)\, dx\)F(y)\,dy\\
&\leq 2 \int_{[0,\delta]^3} (y_1+y_2+y_3)   \(\int_{[0,\delta]^3} F^{3/2}(x)\, dx \)^{2/3}F(y)\,dy\\
&=6 \| f\|^3_{L^{3/2}(0,\delta)}\| f\|^2_{L^1(0,\delta)} \int_0^\delta r   f(r)\, dr.
\end{align*}

\end{proof}
Now we turn to the  study of the    level sets of $\psi_\e$. The main tool is the following estimate, \blue{which is a    consequence} of \eqref{normal diff}, \eqref{energy bound1} and   \eqref{energy bound4}. 
\begin{align}\label{localcalibrationest1}
&\sup_{t\in [0,T]}\int_U \Big(|\nabla\psi_\e|- \bxi \cdot \nabla\psi_\e\Big)\, dx  \nonumber\\
=& \sup_{t\in [0,T]}   \int_U \Big(|\nabla\psi_\e|- \bxi \cdot \nn_\e |\nabla\psi_\e|\Big)\, dx \leq C\e, \quad \forall U \text{ measurable in }\O.
\end{align}

\begin{lemma}\label{sharp est of bdy}
For each  $t\in [0,T]$ there exists a null set $\mathcal{N}_t^\e\subset (0,1/8)$ such that the following holds: 
for every   $\alpha  \in (0,1/8)\backslash \mathcal{N}_t^\e$, there exist \[ b_{\e,\alpha}(t)\in [1/2-\alpha  ,1/2+\alpha  ]\quad\text{ and }\quad q_{\e,\alpha}(t)\in [  2-\alpha  ,  2+\alpha  ]\]  such that the sets \[\{x\mid \psi_\e(x,t) >b_{\e,\alpha}(t)\}\text{ and } \{x\mid \psi_\e(x,t) <q_{\e,\alpha}(t) \}\] are of  finite perimeter and 
\begin{subequations}
\begin{align}\label{areacompare1}
\Big|\mathcal{H}^{d-1}(\{x\mid \psi_\e(x,t) =b_{\e,\alpha}(t)\})-\mathcal{H}^{d-1} (I_t)\Big|\leq  C  \e^{1/4}\alpha  ^{-1}, \\
\label{areacompare2}
  \mathcal{H}^{d-1}(\{x\mid  \psi_\e(x,t) =q_{\e,\alpha}(t)\}) \leq  C  \e^{1/4}\alpha  ^{-1},
\end{align}
\end{subequations}
 where $C>0$ is independent of $t, \e$ and $\alpha  $.
\end{lemma}

\begin{proof}
To prove \eqref{areacompare1}, we consider the set 
\[S_t^{\e,\alpha  } =\{x\in \O\mid   |2\psi_\e (x,t)- 1|\leq  2\alpha   \}, \qquad \forall \alpha   \in (0,1/8).\label{local set}\]
It follows from  the co-area formula of BV function \cite[section 5.5]{MR3409135}   that $ S_t^{\e,\alpha  }$ has finite perimeter for   every $\alpha\in  (0,1/8)\backslash \tilde{\mathcal{N}}_t^\e $ for some null set $\tilde{\mathcal{N}}_t^\e\subset (0,1/8)$.  Moreover, by \eqref{localcalibrationest1}, we have for   every $\alpha     \in  (0,1/8)\backslash \tilde{\mathcal{N}}_t^\e$ that 
\begin{align}\label{localcalibration1}
C\e &\geq    \int_{S_t^{\e,\alpha  }} \Big(|\nabla\psi_\e|- \bxi \cdot \nabla\psi_\e\Big)\, dx \nonumber\\
&=\int_{\frac 12 -\alpha  }^{\frac 12 +\alpha  }   \mathcal{H}^{d-1}\(\{x\mid\psi_\e  =s\}\)\, ds-\int_{\p S_t^{\e,\alpha  }} \bxi \cdot \bnu \psi_\e  \, d\mathcal{H}^{d-1}+\int_{S_t^{\e,\alpha  }} (\div \bxi ) \psi_\e  \, dx,
\end{align}  
where $\bnu$ is the outward   normal of the set $  S_t^{\e,\alpha  }$, defined on its (measure-theoretic) boundary.
Since $|\bxi|\leq 1$ on $\O$  and  $\psi_\e\leq 1$ on $S_t^{\e,\alpha  }$, we have
$$\left|\int_{S_t^{\e,\alpha  }} (\div \bxi ) \psi_\e  \, dx\right| \leq C |S_t^{\e,\alpha  }|,$$ 
where $|A|=\mathcal{L}^d (A)$ is the $d$-Lebesgue measure of a set $A$. Combining this with \eqref{localcalibration1}, we find  
\[\left|\int_{\frac 12 -\alpha  }^{\frac 12 +\alpha  }    \mathcal{H}^{d-1}\(\{x\mid\psi_\e  =s\}\)\, ds-  \int_{\p S_t^{\e,\alpha  }} \bxi \cdot \bnu  \psi_\e  \, d\mathcal{H}^{d-1}\right|\leq C( \e  +  |S_t^{\e,\alpha  }|).\label{volume est1}\]
By the divergence theorem,   we have 
\begin{align*}
\int_{\p S_t^{\e,\alpha  }} \bxi \cdot \bnu  \psi_\e  \, d\mathcal{H}^{d-1}& = -\(\tfrac 12-\alpha   \) \int_{\{x\mid \psi_\e< \frac 12-\alpha  \}} (\div \bxi)     \, dx- \(\tfrac 12 +\alpha   \)\int_{\{x\mid \psi_\e> \frac 12+\alpha  \}} (\div \bxi)     \, dx,\\
-2\alpha   \mathcal{H}^{d-1} (I_t)&\overset{\eqref{def:xi}}=\quad \(\tfrac 12-\alpha   \)\int_{\O_t^-}(\div \bxi) \, dx\qquad\quad  +\(\tfrac 12+\alpha   \) \int_{\O_t^+}(\div \bxi) \, dx.
\end{align*}
Inserting these  two equations into  \eqref{volume est1}, we find
\begin{align}\label{volume est3}
&\left|  \int_{\frac 12 -\alpha  }^{\frac 12 +\alpha  }   \mathcal{H}^{d-1}\(\{x\mid\psi_\e  =s\}\)\, ds-2\alpha    \mathcal{H}^{d-1} (I_t)\right|\nonumber\\
&\leq C\left( \e  +  |S_t^{\e,\alpha  }| +  \Big| \O_t^- \triangle     {\{x\mid \psi_\e <\tfrac 1 2-\alpha  \}} \Big|+  \Big| \O_t^+\triangle {\{x\mid \psi_\e>\tfrac1 2+\alpha  \}}\Big|\right),
\end{align}
where $A\triangle B:=(A-B)\cup (B-A)$ is the symmetric difference of two sets $A$ and $B$.  \blue{We first estimate $r_\e^+:=
~\Big| \O_t^+\triangle {\{x\mid \psi_\e>\tfrac1 2+\alpha  \}}\Big|$.}
\begin{align*}
r_\e^+=&~\Big| \O_t^+- {\{x\mid \psi_\e>\tfrac1 2+\alpha  \}}\Big|+\Big|  {\{x\mid \psi_\e>\tfrac1 2+\alpha  \}}- \O_t^+\Big|\\
=&~\Big| \(\O_t^+- \{x\in \O_t^+\mid \psi_\e>\tfrac1 2+\alpha  \} \)- \{x\in \O_t^-\mid \psi_\e>\tfrac1 2+\alpha  \}\Big|\\
&  +\Big|  {\{x\in \O_t^-\mid \psi_\e>\tfrac1 2+\alpha  \}}\Big|\\
\leq &~\Big|   {\{x\in \O_t^+\mid \psi_\e\leq \tfrac1 2+\alpha  \}}\Big|+ \Big| {\{x\in \O_t^-\mid\psi_\e>\tfrac1 2+\alpha  \}}\Big|.
\end{align*}
Now using  Chebyshev's inequality and \eqref{volume convergence}, we find    $r_\e^+\leq C \e^{1/4} $. Similar estimates apply to  $ |S_t^{\e,\alpha  }|$ and $r_\e^-:= | \O_t^- \triangle     {\{x\mid \psi_\e <\tfrac 1 2-\alpha  \}} |$. Substituting these estimates into \eqref{volume est3}, we find   
\begin{align}\label{volume est4}
&\left| \frac 1{ 2\alpha  }  \int_{\frac 12 -\alpha  }^{\frac 12 +\alpha  }   \Big(\mathcal{H}^{d-1}\(\{x\mid\psi_\e  =s\}\)-\mathcal{H}^{d-1} (I_t)\Big)\, ds\right|\leq  C\e^{1/4}\alpha  ^{-1}.
\end{align}
So  \eqref{areacompare1} follows from  Fubini's theorem.
 
To prove  \eqref{areacompare2}, we consider the set 
\[Q_t^{\e,\alpha  } =\{x\in \O\mid    |\psi_\e (x,t)- 2|\leq  \alpha   \}, \qquad \forall \alpha   \in (0,1/8),\label{local set1}\]
 Using \eqref{localcalibrationest1}  and the co-area formula, we have for   every $\alpha  \in (0,1/8)\backslash \mathcal{N}_t^\e$ that 
\begin{align*}
C\e & \geq  \int_{Q_t^{\e,\alpha  }} \(|\nabla\psi_\e|- \bxi \cdot \nabla\psi_\e\)\, dx \\
&=\int_{ 2 -\alpha  }^{ 2 +\alpha  }   \mathcal{H}^{d-1}\(\{x\mid \psi_\e  =s\}\)\, ds-\int_{\p Q_t^{\e,\alpha  }} \bxi \cdot \bnu \psi_\e  \, d\mathcal{H}^{d-1}+\int_{Q_t^{\e,\alpha  }} (\div \bxi ) \psi_\e  \, dx,
\end{align*}  
where  $ \mathcal{N}_t^\e\supset  \tilde{\mathcal{N}}_t^\e$ is a null set in $(0,1/8)$ and $\bnu$ is the outward   normal of  $\p Q_t^{\e,\alpha  }$.  Since $\psi_\e\leq 3$ on $Q_t^{\e,\alpha  }$, we have
$\left|\int_{Q_t^{\e,\alpha  }} (\div \bxi ) \psi_\e  \, dx\right| \leq C |Q_t^{\e,\alpha  }|,$
and thus  
\[\int_{ 2 -\alpha  }^{ 2 +\alpha  }    \mathcal{H}^{d-1}\(\{x\mid \psi_\e  =s\}\)\, ds\leq \left|  \int_{\p Q_t^{\e,\alpha  }} \bxi \cdot \bnu  \psi_\e  \, d\mathcal{H}^{d-1}\right|+C \e  + C |Q_t^{\e,\alpha  }|.\label{volume est1new}\]
Using  \eqref{bc n and H}, we  have   $\int_{\O}(\div \bxi)\, dx=0$, and thus  
\begin{align*}
&\int_{\p Q_t^{\e,\alpha  }} \bxi \cdot \bnu  \psi_\e  \, d\mathcal{H}^{d-1}\\
 = & \( 2-\alpha   \) \int_{\{x\mid  \psi_\e\geq  2-\alpha  \}} (\div \bxi )    \, dx+ \( 2 +\alpha   \)\int_{\{x\mid \psi_\e\leq   2+\alpha  \}} (\div \bxi  )   \, dx\\
 =&   \( 2-\alpha   \) \int_{\{x: \psi_\e\geq  2-\alpha  \}} (\div \bxi )   \, dx- \( 2 +\alpha   \)\int_{\{x\mid  \psi_\e>  2+\alpha  \}} (\div \bxi  )   \, dx.
\end{align*}
This combined with   Chebyshev's inequality and \eqref{volume convergence} implies that 
$$|Q_t^{\e,\alpha  }|+\left|  \int_{\p Q_t^{\e,\alpha  }} \bxi \cdot \bnu  \psi_\e  \, d\mathcal{H}^{d-1}\right|\leq C \e^{1/4}. $$
Substituting this in  \eqref{volume est1new} leads to 
\[\frac1 {2\alpha}\int_{ 2 -\alpha  }^{ 2 +\alpha  }    \mathcal{H}^{d-1}\(\{x\mid \psi_\e  =s\}\)\, ds\leq  C\e^{1/4}\alpha^{-1}.\]
So    \eqref{areacompare2} follows from  Fubini's theorem.
\end{proof}
 We end this section with the following   result concerning the convergence of $\uu_{\e}$.
\begin{prop}\label{global control prop}
 For every sequence   $\e_k\downarrow 0$  there exists a subsequence, which we will not relabel,   such that $\uu_k:=\uu_{\e_k}$ satisfies 
\begin{subequations} 
\begin{align}\label{global control1}
 \p_t \uu_k\wedge\uu_k    \xrightarrow{k\to \infty}& ~\p_t \uu\wedge \uu~\text{weakly in}~  L^2(0,T;L^{6/5}(\Omega)),\\
 \p_i \uu_k\wedge \uu_k    \xrightarrow{k\to \infty}&  ~ \p_i \uu\wedge \uu~\text{weakly-star in}~  L^\infty(0,T;L^{6/5}(\Omega)), ~1\leq i\leq 3,
\label{global control t}
\end{align}
\end{subequations}
  where     $\uu=\uu(x,t)$ satisfies  
\begin{subequations}\label{revision5.2}
\begin{align}
\uu &\in L^\infty(0,T;   W^{1,2}_{loc}\cap W^{1,6/5}(\O^+_t;\mathbb{S}^2)),\label{orientation integrable1} \\
\p_t \uu&\in  L^2(0,T; L^2_{loc}\cap L^{6/5}(\O^+_t)),\label{orientation integrable2}\\
 \uu(x,t)&=0 ~ \text{ for  every  }~ t\in [0,T]\text{ and for a.e. } x\in \O_t^-.\label{u equal 0 region}
\end{align}
\end{subequations}
  Furthermore,
\begin{subequations}\label{weak strong convergence}
\begin{align}
\p_t \uu_k\xrightarrow{ k\to\infty } \p_t \uu   &~\text{weakly in}~  L^2(0,T;L^2_{loc}(\O^\pm_t)),\label{deri con2}\\
\nabla \uu_k\xrightarrow{k\to\infty }  \nabla \uu   &~\text{weakly-star in}~  L^\infty(0,T;L^2_{loc}(\O^\pm_t)),\label{deri con}\\
  \uu_k\xrightarrow{k\to\infty }    \uu   & ~\text{strongly in}~  C([0,T];L^2_{loc}(\O^\pm_t)).\label{deri con1}
\end{align}
\end{subequations}
\end{prop}
 Before proving this result, we state  the Aubin--Lions--Simon lemma. See  \cite[Theorem 8.62, Exercise 8.63]{MR3726909}  or \cite[Corollary 8]{Simon1987} for the  proof.
\begin{lemma}\label{aubinlions}
Let $I \subset \mathbb{R}$ be an open bounded interval, let $\left(Y_0,\|\cdot\|_{Y_0}\right),\left(Y_1,\|\cdot\|_{Y_1}\right)$, and   $\left(Y_2,\|\cdot\|_{Y_2}\right)$ be Banach spaces with $Y_0 \hookrightarrow Y_1 \hookrightarrow Y_2$. Assume that the embedding $Y_0 \hookrightarrow Y_1$ is compact. Let   $\mathcal{V}$ be the Banach space of all functions $u \in L^\infty\left(I ; Y_0\right)$ whose distributional derivative $\p_t u$ belongs to $L^2\left(I ; Y_2\right)$ endowed with the norm
$$
\|u\|_{\mathcal{V}}:=\|u\|_{L^\infty\left(I ; Y_0\right)}+ \|\p_t u   \|_{L^2\left(I ; Y_2\right)} .
$$
Then the embedding $\mathcal{V} \hookrightarrow C\left(\bar{I} ; Y_1\right)$ is compact.
\end{lemma}

\begin{proof}[Proof of Proposition \ref{global control prop}]
Define  $\O^\pm:=\bigcup_{t\in (0,T]}\O_t^\pm\times \{t\}$. 
We first deduce from \eqref{energy bound4} and \eqref{energy bound0} that 
\[\| \p_t \uu_\e  -\Pi_{\uu_\e  }\p_t \uu_\e   \|_{L^2(0,T;L^2(\O))} 
 +\|  \nabla  \uu_\e  -\Pi_{\uu_\e  }\nabla \uu_\e \| _{L^\infty(0,T;L^2(\O))}\leq C\label{modulatedlocalapp1}
 \]
 for some $C$ independent of $\e $. On the other hand,  by   \eqref{projection1} we find 
\begin{equation}
    \Pi_{\uu_\e  }\p_i \uu_\e  (x,t)\wedge \uu_\e  (x,t)=0\quad\forall (x,t)\in \O\times (0,T)\label{wedge trick}
\end{equation}
for $0\leq i\leq 3$ where $\p_0:=\p_t$. Combining \eqref{wedge trick} and \eqref{modulatedlocalapp1} with       \eqref{L infinity bound1},
we deduce that 
\begin{align}\label{uniform bound wedge}
&\|  \p_t \uu_\e  \wedge\uu_\e   \|_{L^2(0,T;L^{6/5}(\O))}+\| \nabla  \uu_\e  \wedge \uu_\e   \|_{L^\infty(0,T;L^{6/5}(\O))}\nonumber\\
 =&\| (\p_t \uu_\e  -\Pi_{\uu_\e  }\p_t \uu_\e)\wedge  \uu_\e   \|_{L^2(0,T;L^{6/5}(\O))}\nonumber\\
&+\| (\nabla  \uu_\e  -\Pi_{\uu_\e  }\nabla \uu_\e ) \wedge \uu_\e    \|_{L^\infty(0,T;L^{6/5}(\O))}\leq C.
\end{align}
  So it  follows from  the Banach--Alaoglu theorem (cf. \cite[A.5.]{MR3726909}) that 
\blue{\[\label{convergenceg}
  \begin{split} 
 \p_t \uu_k\wedge\uu_k    \xrightarrow{k\to \infty}& ~\mathbf{g}_0~\text{weakly in}~  L^2(0,T;L^{6/5}(\Omega)),\\
 \p_i \uu_k\wedge \uu_k    \xrightarrow{k\to \infty}&  ~ \mathbf{g}_i~\text{weakly-star in}~  L^\infty(0,T;L^{6/5}(\Omega))
\end{split}
\]}
 where 
\[\label{gilimit1}\mathbf{g}_0\in L^2(0,T;L^{6/5}(\Omega))\text{ and }\{\mathbf{g}_i\}_{1\leq i\leq 3}\subset L^\infty(0,T;L^{6/5}(\Omega)).
\]
It follows from \eqref{L infinity bound1}, \eqref{space der bound local} and \eqref{time der bound local}  that,  for any fixed $ \delta\in (0,\delta_0)$,  up to extraction of   subsequences there exists   $\e_k=\e_k(\delta)\xrightarrow{k\to\infty}0$ such that
\begin{subequations}\label{udelta conv}
\begin{align}
  \uu_{\e_k}\xrightarrow{k\to\infty }  \uu&~\text{weakly-star in}~  L^\infty(0,T ;L^3(\O))\label{convergence L4},\\
   \uu_{\e_k}\xrightarrow{k\to\infty }  \bar{\uu}_\delta&~\text{weakly-star in}~  L^\infty(0,T ;L^3(\O^\pm_t\backslash B_{\delta}(I_t))),\label{convergence L4*}\\
\p_t \uu_{\e_k}\xrightarrow{ k\to\infty } \p_t \bar{\uu}_\delta&~\text{weakly in}~  L^2\Big(0,T;L^2(\O^\pm_t\backslash B_{\delta}(I_t))\Big),\label{convergence weak time der}\\
\nabla \uu_{\e_k}\xrightarrow{k\to\infty } \nabla \bar{\uu}_\delta&~\text{weakly-star in}~  L^\infty\Big(0,T;L^2(\O^\pm_t\backslash B_{ \delta }(I_t))\Big).\label{convergence weak gradient}
  \end{align}
\end{subequations}
 By \eqref{convergence L4} and \eqref{convergence L4*}, we have     $\uu=\bar{\uu}_\delta$ a.e. in $U^\pm(\delta):=\cup_{t\in [0,T]} \(\O_t^\pm\backslash B_{\delta}(I_t)\)\times \{t\}$. This combined with \eqref{convergence weak time der} and \eqref{convergence weak gradient} leads to 
\begin{equation}\label{regular limit}
\uu\in L^\infty(0,T;W^{1,2}_{loc}(\O^\pm_t))   ~\text{with}~\p_t \uu\in L^2(0,T;L^2_{loc}(\O^\pm_t)).
\end{equation}
 Furthermore, employing  \eqref{convergence L4*}-\eqref{convergence weak gradient} and   Lemma \ref{aubinlions}, we   obtain 
 \[ \uu_{\e_k}\xrightarrow{k\to\infty }  \bar{\uu}_\delta=\uu ~\text{strongly in}~  C([0,T];L^2(\O^\pm_t\backslash B_{\delta}(I_t))).\label{convergence strong conti}\]
 By passing to a sequential limit  $\delta=\delta_\ell \xrightarrow{\ell\to 0} 0$ and  by a diagonal argument  we obtain \eqref{weak strong convergence}  up to extraction of   subsequences.
 
 Now we turn to the proof of \eqref{revision5.2}.  Using \eqref{space der bound local}, \eqref{convergence strong conti} and Fatou's lemma, we deduce that 
$$f(|\uu|)=F(\uu)=F(\bar{\uu}_\delta)=0\text{ a.e. in }U^\pm(\delta)$$   for any fixed $\delta\in (0,\delta_0)$. This together  with       \eqref{bulk assump2} implies    that  
  $ \uu$ ranges in $\{0\}\cup \mathbb{S}^2$ a.e. in $\O\times (0,T)$. This combined with \eqref{volume convergence} and \eqref{regular limit} yields \eqref{u equal 0 region} and   
\begin{align}
\uu\in L^\infty (0,T;W^{1,2}_{loc}(\O^+_t;\mathbb{S}^2)) \text{  with  }\p_t \uu \in L^2(0,T;L^2_{loc}(\O^+_t)).\label{orientation}
\end{align}
Now we  show the integrability of $\nabla_{x,t} \uu$ up to the boundary. To this aim, we    choose a sequence of functions 
\begin{equation}\label{cut-off}
\{\eta_k(\cdot,t)\}_{k\geq 1}\subset  C_c^\infty(\O^+_t)~\text{ with }~\eta_k(\cdot,t)\xrightarrow{k\to\infty} \mathbf{1}_{\O^+_t}\text{ in }L^\infty(\O).
\end{equation}
By  \eqref{convergenceg} and \eqref{weak strong convergence}, we deduce that    for $0\leq i\leq 3$,
\begin{equation}\label{precise S}
\eta_k \mathbf{g}_i= \eta_k \p_i \uu\wedge \uu ~\text{   a.e. in }   \O\times (0,T).
\end{equation}
By  \eqref{gilimit1}  and  the dominated convergence theorem, we can take $k\to\infty$  and  get 
\[\mathbf{g}_i=\p_i \uu\wedge \uu ~~ \text{ a.e. in }~\Omega\times (0,T),~0\leq i\leq 3.\]
This and  \eqref{convergenceg} lead to \eqref{global control1} and \eqref{global control t}.
Since  $\uu$ maps $\O^+$ into $\mathbb{S}^2$, we have 
\[  |\p_i \uu|^2=|\p_i \uu\wedge \uu|^2=|\mathbf{g}_i|^2~~a.e.~\text{ in }~\O^+,~0\leq i\leq 3.\]
 This and  \eqref{gilimit1}   improve \eqref{orientation} and yield     \eqref{orientation integrable1} and \eqref{orientation integrable2}.
\end{proof}

 
 \section{Proof of Theorem \ref{main thm}: Anchoring boundary condition}\label{anchoring}
 
 The inequalities  \eqref{initial preserve} and \eqref{con psi} have been proved in Theorem \ref{thm close energy with div} and in Theorem \ref{sharp l1}. The assertions  \eqref{reg limit}, \eqref{reguptobdy1} and \eqref{u equal 0 region1} have been proved in Proposition  \ref{global control prop} (cf. \eqref{deri con1} and \eqref{revision5.2}).
It remains to  verify    \eqref{anchoring bc}, and this will be done by applying Lemma \ref{sharp est of bdy} for  every $t\in [0,T]$ and by choosing an appropriate $\alpha$ outside  the null set $\mathcal{N}_t^{\e_k}\subset (0,1/8)$. 
For simplicity we shall   abbreviate $\psi_{\e_k}$ and $\uu_{\e_k}$ by $\psi_k$ and $\uu_k$ respectively.   For any $k\geq 1$ we can choose  $\beta_k\in [1/2,1]$ such that  $\alpha=\alpha_k:= \beta_k{\e_k^{1/8}}\notin  \mathcal{N}_t^{\e_k}$. Then by  Lemma \ref{sharp est of bdy} there exist      
  \[  b_{\e_k,\alpha _k}(t)=:b_k\in [\tfrac 1 2-\alpha _k,\tfrac 1 2+\alpha _k],\quad  q_{\e_k,\alpha _k}(t)=:q_k\in [2-\alpha _k,2+\alpha _k]\] such that 
  \[(b_k,q_k)\xrightarrow{k\to \infty}(\tfrac 12 ,2),\label{bkqk122}\] 
  and such that  the set
  \[\O_t^k:=\{x\in\O\mid  b_k<\psi_k(x,t) < q_k\}\text{ has finite perimeter}.\label{bk def}\]
 Moreover,  there exists   $C>0$ which is   independent of $t$ and the particular choice of the subsequence $\e_k$ such that  
\begin{subequations}
\begin{align}
 &\mathcal{H}^{d-1}(\{x\mid  \psi_k(x,t) =q_k\})  \leq  C {\e_k^{1/8}},\label{shrinking length}\\
& \Big|\mathcal{H}^{d-1}(\p\O_t^k)-\mathcal{H}^{d-1} (I_t)\Big|  \leq  2 C  {\e_k^{1/8}}. \label{sharp bdy est2}
\end{align}
\end{subequations}

 Using these level sets, we can prove the following  proposition which improves \eqref{weak strong convergence} to   the convergence of $\uu_k$  up to the boundary $I_t$.
 \begin{prop}\label{prop con sbv}
 Let $\uu$ be the limit vector field   in Proposition \ref{global control prop}.
 For a.e.   $t\in [0,T]$,  up to   extraction of  subsequences  which we will not relabel, we have 
\begin{subequations}
\begin{align}
 \mathbf{1}_{   \bar{\O}_t^k} \widehat{\uu}_k &\xrightarrow{k\to\infty }    \1_{\O_t^+}\uu  ~\text{ weakly-star in } BV(\O),\label{strong convergence of u1}\\
\mathbf{1}_{  \bar{\O}_t^k} \nabla \widehat{\uu}_k &\xrightarrow{k\to\infty }  \1_{\O_t^+}\nabla \uu  ~\text{ weakly  in } L^1(\O),\label{convergence diffusive}\\
 \mathbf{1}_{  \bar{\O}_t^k} \widehat{\uu}_k &\xrightarrow{k\to\infty }    \1_{\O_t^+}\uu  ~\text{ strongly  in } L^p(\O),\text{ for any fixed } p\in [1,\infty),\label{strong convergence of u3}
\end{align}
\end{subequations}
 where $\widehat{\uu}_k=\widehat{\uu}_{\e_k}$ is defined in \eqref{def uhat}.
\end{prop}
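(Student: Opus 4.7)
The plan is to view $\{\mathbf{1}_{\O_t^k}\widehat{\uu}_k\}$ as a sequence of $BV(\O)$ functions, establish a uniform $BV$ bound via the twin coercivities of the perimeter estimate \eqref{sharp bdy est2} and the orientation estimate \eqref{bound degree+orien}, and then identify the weakly-$\star$ $BV$ limit as $\mathbf{1}_{\O_t^+}\uu$ using the a.e.\ convergence inherited from Theorem \ref{sharp l1} together with the local strong convergence \eqref{deri con1}. The three stated modes of convergence \eqref{strong convergence of u1}--\eqref{strong convergence of u3} will then follow by standard upgrading.

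For the uniform $BV(\O)$ bound, the $L^\infty$ part is immediate from $|\widehat{\uu}_k|\leq 1$, and the distributional gradient splits as
\[
D(\mathbf{1}_{\O_t^k}\widehat{\uu}_k)=\mathbf{1}_{\O_t^k}\nabla \widehat{\uu}_k\,\mathcal{L}^2+(\widehat{\uu}_k\otimes \nu_k)\,\mathcal{H}^1\mres\partial^*\O_t^k,
\]
where $\nu_k$ denotes the measure-theoretic outer unit normal to $\O_t^k$. The jump part has total variation at most $\mathcal{H}^1(\partial^*\O_t^k)$, which stays bounded by \eqref{shrinking length} and \eqref{sharp bdy est2}. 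For the volume part, the key observation is that the function $d_F$ in \eqref{df def} is continuous and strictly increasing on $\R_+$, since by \eqref{g lip}--\eqref{f increase} and \eqref{bulk assump} the non-negative continuous function $g$ vanishes only at $\{0,1\}$. Consequently, $\psi_k=d_F(|\uu_k|)\geq b_k\geq 1/4$ on $\O_t^k$ forces $|\uu_k|\geq c_\ast:=d_F^{-1}(1/4)>0$ for all $k$ large enough. Combining this lower bound with \eqref{bound degree+orien} gives $\int_{\O_t^k}|\nabla \widehat{\uu}_k|^2\,dx\leq C/c_\ast^2$, and Cauchy--Schwarz then yields $\int_{\O_t^k}|\nabla \widehat{\uu}_k|\,dx\leq C$. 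Standard $BV$ compactness thus produces a subsequence with $\mathbf{1}_{\O_t^k}\widehat{\uu}_k\to v$ weakly-$\star$ in $BV(\O)$ and strongly in $L^1(\O)$.

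The limit is identified pointwise. By Theorem \ref{sharp l1}, $\psi_k\to \mathbf{1}_{\O_t^+}$ in $L^1(\O)$, so along a subsequence the convergence holds a.e.; since $b_k\to 1/2$ and $q_k\to 2$, it follows that $\mathbf{1}_{\O_t^k}\to \mathbf{1}_{\O_t^+}$ a.e. By \eqref{deri con1}, a further subsequence satisfies $\uu_k(\cdot,t)\to \uu(\cdot,t)$ a.e.\ in $\O_t^\pm$, and combined with $|\uu|=1$ on $\O_t^+$ this gives $\widehat{\uu}_k\to \uu$ a.e.\ on $\O_t^+$. Hence the $L^1$-limit $v$ equals $\mathbf{1}_{\O_t^+}\uu$ a.e., proving \eqref{strong convergence of u1}. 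The strong $L^p$ convergence \eqref{strong convergence of u3} follows from the $L^\infty$ bound and the strong $L^1$ convergence by interpolation. For \eqref{convergence diffusive}, the $L^2$ bound $\|\mathbf{1}_{\O_t^k}\nabla \widehat{\uu}_k\|_{L^2(\O)}\leq C/c_\ast$ from the previous paragraph yields a weakly $L^2$-convergent subsequence to some limit $G\in L^2(\O)$; testing against $\varphi\in C^\infty_c(\O_t^+\cup \O_t^-)$ and passing to the limit via the strong $L^p$ convergence of $\mathbf{1}_{\O_t^k}\widehat{\uu}_k$ together with $\mathbf{1}_{\O_t^k}\to \mathbf{1}_{\O_t^+}$ locally in measure identifies $G=\mathbf{1}_{\O_t^+}\nabla \uu$ distributionally, and weak $L^2$ convergence on the bounded domain $\O$ implies \eqref{convergence diffusive}.

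The main obstacle is the simultaneous control of the two parts of the $BV$ norm: a generic level $b$ of $\psi_k$ gives no uniform bound on $\mathcal{H}^1(\{\psi_k=b\})$, while the volume integrand $|\nabla \widehat{\uu}_k|$ is singular at the transition layer $\{|\uu_k|\to 0\}$. The role of Lemmas \ref{sharp est of bdy}--\ref{sharp est of bdy1} is precisely to select a good inner level $b_k\approx 1/2$ and a good outer level $q_k\approx 2$ so that $\mathcal{H}^1(\partial\O_t^k)$ stays bounded while $|\uu_k|$ is uniformly bounded away from zero on $\O_t^k$; only with this choice do the two terms of the $BV$ norm become simultaneously integrable.
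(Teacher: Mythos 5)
Your treatment of \eqref{strong convergence of u1} and \eqref{strong convergence of u3} follows the paper's route and is correct: the lower bound $|\uu_k|\geq c_*>0$ on $\O_t^k$ (this is exactly the paper's claim \eqref{push away}), the resulting $L^2$ bound on $\1_{\O_t^k}\nabla\widehat{\uu}_k$ via \eqref{bound degree+orien}, the perimeter bounds \eqref{shrinking length}--\eqref{sharp bdy est2}, and then $BV$ compactness with identification of the limit through $\1_{\O_t^k}\to\1_{\O_t^+}$ and \eqref{deri con1}. The gap is in \eqref{convergence diffusive}. Standard $BV$ compactness only gives weak-$\star$ convergence of the \emph{full} derivative measures $D(\1_{\O_t^k}\widehat{\uu}_k)\rightharpoonup D(\1_{\O_t^+}\uu)$; it says nothing separately about the absolutely continuous parts. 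Your attempted identification of the weak $L^2$ limit $G$ of $\1_{\O_t^k}\nabla\widehat{\uu}_k$ by testing against $\varphi\in C_c^\infty(\O_t^+\cup\O_t^-)$ does not close: any integration by parts against $\1_{\O_t^k}\widehat{\uu}_k$ reintroduces the jump measures $J_k=(\widehat{\uu}_k\otimes\nu_k)\,\mathcal{H}^1\mres\partial^*\O_t^k$, and $\partial^*\O_t^k$, being a level set of $\psi_k$, has bounded $\mathcal{H}^1$ measure but no pointwise localization near $I_t$, so it may well intersect $\operatorname{supp}\varphi$. Moreover, a weak-$\star$ limit of singular measures of bounded mass can have a nontrivial absolutely continuous part, so the relation $G\,\mathcal{L}^2+\lim_k J_k=D(\1_{\O_t^+}\uu)$ alone does not force $G=\1_{\O_t^+}\nabla\uu$.

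The missing ingredient is Ambrosio's $SBV$ closure/compactness theorem, which the paper invokes precisely here (cited as \cite{ambrosio1995new} and \cite[Section 4.1]{MR1857292}): given a uniform $L^\infty$ bound, a uniform $L^p$ bound for some $p>1$ on the absolutely continuous parts of the gradients, and a uniform $\mathcal{H}^1$ bound on the jump sets, the $BV$-weak-$\star$ limit lies again in $SBV$ and $\nabla^a\vv_k\rightharpoonup\nabla^a\vv$ weakly in $L^1$. Your argument already establishes all three hypotheses (the $L^\infty$ bound from $|\widehat{\uu}_k|\leq 1$, the $L^2$ bound from \eqref{bound degree+orien} together with the lower bound on $|\uu_k|$, and the $\mathcal{H}^1$ bound from \eqref{shrinking length}--\eqref{sharp bdy est2}); simply invoke this theorem in place of your by-hand identification of $G$, and \eqref{convergence diffusive} follows directly.
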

\begin{proof}
 We first claim that  there exists a positive constant $C_3$ depending only on $f$ (cf. \eqref{bulk assump})   such that the following  statement holds  for 
 any  $\delta\in (0,1/8)$:
\begin{align}
  |\uu_\e(x,t)|\geq C_3\delta\qquad &\qquad \forall x\in \{x:\psi_\e \geq \delta\}.\label{push away}
\end{align}
Indeed, by \eqref{f increase}, $f$ (and also $g$) is  increasing on $(0,s_0)$.
    If $|\uu_\e |\geq s_0$, we are done. Otherwise,
$$\delta \leq  \psi_\e=\int^{|\uu_\e |}_0g(s) \, ds\leq  |\uu_\e |  g(s_0),$$
which implies \eqref{push away}. This combined with \eqref{bound degree+orien} and \eqref{bk def} implies    
\[\sup_{t\in [0,T]}\int_{\O_t^k}   \left| \nabla \widehat{\uu}_k\right|^2\, dx\leq C\label{local gradient} \]
for $k$ sufficiently large. This and \eqref{bk def} imply   that the distributional derivatives  of  $ \vv_k(\cdot,t):= \mathbf{1}_{  \bar{\O}_t^k} \widehat{\uu}_k(\cdot,t)$  have no Cantor parts, and  the   absolute continuous parts $\{\mathbf{1}_{  \bar{\O}_t^k} \nabla \widehat{\uu}_k\}_{k\geq 1}$ is bounded in $L^2(\O)$. Moreover, their   jump parts enjoy the estimate
$$\int_{\p \O_t^k} |\vv_k(\cdot,t)-0|^2\,d\mathcal{H}^{d-1}\overset{\eqref{sharp bdy est2}}\leq C,$$
and $\{\vv_k(\cdot,t)\}_{k\geq 1}$ is bounded in $L^\infty(\O)$. 
With these properties,    it follows from   \cite{ambrosio1995new} (or \cite[Section 4.1]{MR1857292}) that   $\{\vv_k(\cdot,t)\}_{k\geq 1}$ is compact in $SBV(\O)$, the class of  special functions of bounded variation on $\O$. More precisely, there exists $\vv(\cdot,t)\in SBV(\O)$ s.t. $\vv_k\to\vv$ weakly-star in  $BV(\O)$ as $k\to \infty$,
and the absolute continuous  part of the gradient $\nabla^{a} \vv_k=\mathbf{1}_{  \bar{\O}_t^k} \nabla \widehat{\uu}_k$ converges weakly in $L^1(\O)$ to $\nabla^{a} \vv$.
To identify $\vv$, we use \eqref{volume convergence} to deduce that  $\1_{\bar{\O}_t^k}\to \1_{\O_t^+}$ in $L^1(\O)$ as $k\to\infty$. This  and \eqref{deri con1} yield  $\vv(\cdot,t)=\1_{ \O_t^+}~\uu(\cdot,t)$ a.e. in $\O$, and thus   \eqref{strong convergence of u1} and \eqref{convergence diffusive} are proved. Finally  by \eqref{strong convergence of u1}, the  compact embedding of $BV$ functions and   the $L^\infty$ bound we get \eqref{strong convergence of u3}. 
 \end{proof}
To proceed we define the following measures  {for} Borel sets $A\subset \O$:
\begin{subequations}\label{def measures}
\begin{align}
\theta(A)&=\mathcal{H}^{d-1} (A\cap I_t),\label{thetameasure}\\
   \theta_k(A)&=\int_{A\cap \O_t^k} |\nabla \psi_k|\, dx.\label{def measure mue1}
\end{align}
\end{subequations}
\begin{lemma}\label{referee2}
For a.e.  $t\in [0,T]$,  
\begin{align}
\theta_k \xrightarrow{k\to \infty} \frac 12 \theta \quad & \text{ weakly-star as Radon measures.} \label{weak radom}
\end{align}
\end{lemma}
\begin{proof}
We define  truncation functions
\begin{align}\label{truncation h}
T_k(s)=\left\{
\begin{array}{rl}
0\qquad \text{when } &s\leq b_k,\\
 s-b_k \qquad \text{when } &b_k\leq s\leq q_k,\\
 q_k-b_k\qquad \text{when } &s\geq q_k,
\end{array}
\right.\\
 T(s)=\left\{
\begin{array}{rl}
0\qquad \text{when } &s\leq   1/2,\\
 s-1/2 \qquad \text{when } &  1/2\leq s\leq 2,\\
3/2 \qquad \text{when } &s\geq 2.
\end{array}
\right.
\end{align}
By \eqref{bkqk122}, we have   $T_k\xrightarrow{k\to \infty}T$  uniformly on $\R$.  Moreover,  
\begin{subequations}
\begin{align}
& \nabla (T_k\circ\psi_k)=\nabla\psi_k ~\1_{\O_t^k}\qquad  \text{a.e. in }\O,\label{psi-bc}\\
&T_k\circ\psi_k\xrightarrow{k\to \infty} \tfrac 12 \1_{\O_t^+}\text{ strongly in }L^p(\O)\qquad\text{for any fixed}~p\in [1,\infty).\label{psi-b}
\end{align}
\end{subequations}
Indeed, by \eqref{regularityofsolution} and \eqref{df def} we know that $\psi_k(\cdot,t)\in C^1(\O)$. Also by \eqref{bk def} we have $T_k'\circ\psi_k=\1_{\O_t^k}$ for a.e. $x\in\O$.
Therefore,   \eqref{psi-bc} follows from  the  chain rule (cf. \cite[Proposition 3.24]{MR3099262}), while    \eqref{psi-b} follows from \eqref{volume convergence} and the dominated convergence theorem.  
By \eqref{localcalibrationest1}    we have for any $g\in C^1_c(\O)$ that 
 \begin{align*}
 \int_\O g\, d\theta_k\overset{\eqref{def measure mue1}}=\int_{  \O_t^k} g |\nabla \psi_k|\, dx&\overset{\eqref{localcalibrationest1}}=O(\e_k)+  \int_{  \O_t^k} g \, \bxi\cdot \nabla  \psi_k \, dx\\
 &\overset{\eqref{psi-bc}}=O(\e_k)+  \int_{\O} g \, \bxi\cdot \nabla (T_k\circ\psi_k)\, dx \\
 &~=O(\e_k)-  \int_{\O} \div (g   \bxi)  ~T_k\circ \psi_k\, dx.
 \end{align*}
 Recalling that $\bxi$ is the   inward  normal of $I_t$,  we  use \eqref{psi-b} to   pass to the limit in the above equations and  obtain
  \begin{align*}
 \lim_{k\to \infty}\int_\O g\, d\theta_k\overset{\eqref{psi-b}}= -\frac 12  \int_{\O_t^+} \div (g   \bxi) \, dx =\frac 12\int_{I_t} g\, d\mathcal{H}^{d-1}\overset{\eqref{thetameasure}}=\frac 12 \int_\O g\, d\theta, 
 \end{align*}
 for any $g \in C^1_c(\O)$.
 By approximation, one can  pass from $C^1_c(\O)$ to $C^0_c(\O)$, 
 and this proves  \eqref{weak radom}.   \end{proof}
Now we finish the proof of Theorem \ref{main thm} by verifying    \eqref{anchoring bc}.
The proof here is inspired by the blow-up  argument in  \cite{lin2020isotropic}. See also  \cite{MR1218685} for  the applications of such a method  in the study of quasi-convex functionals. 
\begin{proof}[Proof of \eqref{anchoring bc}]
For any $x_0 \in I_t$ and any $R>0$, it follows from     \eqref{strong convergence of u3}, \eqref{psi-b} and the dominated convergence theorem that 
\begin{align*}
 \lim_{k\to \infty} \int_{ B_R(x_0)  } \1_{\hat{\O}_t^k}\widehat{\uu}_k \cdot \frac{x-x_0}{|x-x_0|} T_k\circ\psi_k\,dx
=     \frac 12 \int_{ B_R(x_0)  } \1_{\O^+_t}\uu \cdot \frac{x-x_0}{|x-x_0|}  \,dx.
 \end{align*}
 We can   use spherical coordinate to rewrite the above two integrals in the form of  $\int_0^R\int_{\p B_r(x_0)}(\cdot) \,d\mathcal{H}^{d-1}dr$,  and then apply  Fubini's theorem. Therefore,    there exists   $r_{j} \downarrow 0$ such that for each $j$ we have 
\begin{align}
\lim _{k\to\infty} \int_{\partial B_{r_{j}}(x_0)  \cap \O_t^k } \widehat{\uu}_k \cdot \bnu  ~T_k\circ\psi_k\,d\mathcal{H}^{d-1} 
= \frac 12 \int_{\partial B_{r_{j}}(x_0) \cap \O^+_t} \uu \cdot \bnu\, d \mathcal{H}^{d-1}\label{boundary concentration}
\end{align}
where  $\bnu$ is   the  outward normal of $\p B_{r_j}(x_0)$.
 Moreover,  we can arrange $r_j$ such  that  $\theta(\p B_{r_{j}}(x_0))=0$. This combined with \eqref{weak radom} implies    that    
 \[
   \lim_{k\to \infty}\theta_k(B_{r_{j}}(x_0)) =  \frac 12 \theta(B_{r_{j}}(x_0)).  \label{weak radom1}\]
To proceed, we   use convexity to write,  for some $a_m,c_m\in \R$, that 
 \[ s^2=\sup_{m\in\mathbb{N}^+}\, (a_m s+c_m),\quad \forall s\in \R.\label{radom convex}\]
 (cf. \cite[Proposition 2.31]{MR1857292}).
For $\theta-a.e. ~x_0\in {\rm supp}(\theta)=I_t$, we have for each $m\geq 1$ that 
\begin{align}
0&\overset{  \eqref{cos law}}{=}    \lim_{k\to \infty} \int_{B_{r_{j}}(x_0)} \(\widehat{\uu}_k\cdot\nn_k \)^2\, d\theta_k\nonumber\\
&\overset{\eqref{radom convex}}{\geq}  \lim_{k\to \infty} \int_{B_{r_{j}}(x_0)} \(a_m \widehat{\uu}_k\cdot\nn_k +c_m\) \, d\theta_k\nonumber\\
&\overset{\eqref{normal diff}}=      \lim_{k\to \infty}  a_m \int_{B_{r_{j}}(x_0)} \1_{\O_t^k}  \widehat{\uu}_k\cdot \nabla\psi_k\, dx +c_m \theta_k (B_{r_{j}}(x_0))    \nonumber\\
&\overset{\eqref{psi-bc}}=       a_m\lim_{k\to \infty}\int_{B_{r_{j}}(x_0)\cap \O_t^k}    \widehat{\uu}_k\cdot\nabla   (T_k\circ\psi_k)\, dx +  \theta\left(B_{r_{j}}(x_0)\right)\frac {c_m}2.\label{recover bk}
\end{align}
Note that in the last step we also used  \eqref{weak radom1}.
It remains to compute the    integral in the last display of  \eqref{recover bk} under the limit $k\to \infty$ for fixed $j,m$. To this aim, we use    \eqref{psi-bc} and   integration  by parts    to find 
\begin{align}\label{lower last1}
   &\int_{B_{r_{j}}(x_0)\cap \O_t^k} \widehat{\uu}_k \cdot \nabla   (T_k\circ \psi_k) \, dx\\
 =& \int_{\p \(B_{r_{j}}(x_0)\cap \O_t^k\)} (\widehat{\uu}_k \cdot \bnu) ~  T_k\circ \psi_k \, d\mathcal{H}^{d-1}- \int_{B_{r_{j}}(x_0)}  \1_{\bar{\O}_t^k}(\div \widehat{\uu}_k)  ~ T_k\circ \psi_k \, dx\nonumber\\
 =&:A_k-B_k. \nonumber
\end{align}
Note that 
the  integrand of $A_k$  is  uniformly bounded in $L^\infty$.
To  compute the limit of $A_k$, we first deduce from   \eqref{truncation h}    that $T_k\circ \psi_k =0$ on the set  $\{x\in \O\mid  \psi_k(x,t)=b_k\}$ which has finite perimeter (cf. \eqref{sharp bdy est2}).    So we employ    \eqref{bk def}  to find 
\begin{align}
A_k= & \int_{\p B_{r_{j}}(x_0)\cap \O_t^k} (\widehat{\uu}_k \cdot \bnu )~ T_k\circ\psi_k \, d\mathcal{H}^{d-1}+\int_{ B_{r_{j}}(x_0)\cap \{x|\psi_k=q_k\} } (\widehat{\uu}_k \cdot \bnu) ~ T_k\circ \psi_k  \, d\mathcal{H}^{d-1}.
 \end{align}
 The limit of the first integral   is given  in    \eqref{boundary concentration}, and  that of the second   vanishes in the limit $k\to\infty$  by \eqref{shrinking length}. So we conclude that 
 \begin{align}\label{con ak}
\lim_{k\to \infty}A_k=  \frac 12 \int_{\partial B_{r_{j}}(x_0) \cap \O^+_t} \uu \cdot \bnu\, d \mathcal{H}^{d-1}.
\end{align}
Concerning the   integral $B_k$, by \eqref{convergence diffusive}    the sequence 
 $\{\1_{\bar{\O}_t^k}\div \widehat{\uu}_k\}_{k\geq 1}$ converges weakly in $L^1(\O)$. Moreover, $\{T_k\circ \psi_k\}_{k\geq 1}$ is uniformly bounded in $L^\infty$, and converges a.e. in $\O$ to $ \tfrac 12 \1_{\O_t^+}$, due to \eqref{psi-b}. Therefore,  applying      the Product Limit Theorem (cf. \cite{MR1014927} or \cite[pp. 169]{MR2683475}), we obtain 
\begin{align}
\lim_{k\to \infty}B_k=\frac 12 \int_{ B_{r_{j}}(x_0) \cap  \O^{+}_t} (\div \uu)\, dx.\label{lower last}
\end{align}
Using  \eqref{con ak} and  \eqref{lower last}, we can compute   the limit in  \eqref{lower last1} and find
\begin{align}
  &\lim_{k\to\infty}\int_{B_{r_{j}}(x_0)\cap \O_t^k} \widehat{\uu}_k \cdot \nabla   (T_k\circ \psi_k) \, dx \nonumber\\
 =& \frac 12 \int_{\partial B_{r_{j}}(x_0) \cap \O^+_t} \uu  \cdot \bnu \, d \mathcal{H}^{d-1} -\frac 12 \int_{\p\( B_{r_{j}}(x_0) \cap  \O^{+}_t\) }   \uu\cdot\bnu\, d \mathcal{H}^{d-1}\nonumber\\
=& \frac 12 \int_{  B_{r_{j}}(x_0) \cap \p \O^+_t} \uu  \cdot \bxi\, d \mathcal{H}^{d-1} \label{lower last2}
\end{align}
where    in the last step we used      $\bxi=-\bnu$ on  $\p \O^+_t$. Note that $\bxi$ is the   {inward} normal  of $I_t$ according to  \eqref{def:xi}, and $\O_t^+$ is the region  {enclosed by} $I_t$ with  {outward}   normal $\bnu$. 
Substituting \eqref{lower last2} into 
 \eqref{recover bk}  and then dividing the resulting inequality by $\theta\left(B_{r_{j}}(x_0)\right)$ and taking $j\to \infty$, we find 
\begin{align}
0&\geq   \lim _{j \rightarrow \infty}  \frac{a_m}{\theta\left(B_{r_{j}}(x_0)\right)}\frac 12 \int_{ B_{r_{j}}(x_0) \cap I_t} \uu \cdot\bxi\, d \mathcal{H}^{d-1} +\frac{c_m}2\nonumber\\
&\overset{\eqref{thetameasure}}= \frac {a_m}2 (\uu\cdot\bxi )(x_0)+\frac{c_m}2,\qquad \forall m\in  \mathbb{N}^+.
\end{align}
 This together with \eqref{radom convex}   implies that $(\uu\cdot\bxi)^2(x_0)=0$ for $\mathcal{H}^{d-1}$-a.e. $x_0\in I_t$.
 \end{proof}


\section{Proof of Theorem \ref{main thm oseen frank}: Oseen--Frank limit in the bulk}\label{sec har}
The method here is  inspired by \cite{du2020weak,MR4163316}, which has a 2D nature.
  We set $ \btau_\e:=\p_t \uu_\e$ and   write  \eqref{Ginzburg-Landau}   as 
\begin{align}\label{Ginzburg-Landau stat}
\btau_\e  &=\mu \nabla (\div \uu_\e)  +\Delta \uu_\e   -  \e^{-2} D F (\uu_\e  )\,\,\,\text{in}~ \Omega\times (0,T).
\end{align}
By Corollary \ref{coro space-time der bound} and Proposition \ref{global control prop} (cf. \eqref{deri con1}), for a.e.   $t_0\in (0,T)$  and for any compact set $K\subset \subset \O_{t_0}^+$, we have 
 \begin{subequations}
\begin{align}
 \int_K |\btau_\e|^2\, dx+ &\int_{K}\(\frac 12 |\nabla \uu_\e  |^2+\frac 1{\e^2} 
F( \uu_\e   )  \)\, dx   \leq \hat{c}^2\quad \text{ at }~t=t_0,\label{kortum bound big ball}\\
&\uu_{\e_k}(\cdot,t_0)\xrightarrow{k\to\infty }    \uu(\cdot,t_0)~\text{ strongly  in } L^2(K),\label{strongcovuu2}
\end{align}
\end{subequations}
 where  $\hat{c}=\hat{c}(K,t_0)>1$   is independent of    $\mu$ and  $\e$.
\begin{prop}\label{partial regularity}
\blue{Let $K$ be a compact set of $  \O_{t_0}^+$ and assume that   \eqref{kortum bound big ball} and \eqref{strongcovuu2} hold.}
There exists an   absolute constant  $\Lambda\in (0,1)$ with the following property: under the assumptions
\begin{subequations}
\begin{align}
\hat{\epsilon}<  \Lambda/ \hat{c}^2 ~\text{ and }~\mu<\Lambda,\label{defofepsilon0}\\
B_{2r}(x_0)\subset K~ \text{ with }~ r<1~ \text{ and  }~\\
\label{small energy of GL}
 \int_{B_{2r}(x_0)}\(\frac 12 |\nabla \uu_\e  |^2+\frac 1{\e^2} 
F( \uu_\e  )  \)\, dx  & \leq  \hat{\epsilon}^2~\text{ at }~t=t_0,
\end{align}
\end{subequations}
   there exists a subsequence $\e_k\downarrow 0$, which we will not relabel, such that 
\[\nabla \uu_{\e_k}(\cdot,t_0)\xrightarrow{k\to\infty} \nabla \uu(\cdot,t_0)\text{ strongly in }L^2(B_{r/2}(x_0)).\label{strongcovuu}\]

\end{prop}
 
We shall need the following inequality due to the special choice of $f$ in \eqref{bulk2}:
\[|f'(s)|^2\leq C_4 f(s),\qquad \forall s\geq 0,\label{bulk trick1}\]  
for some $C_4>1$. 
In the sequel  $C_4>1$ will also be used as  a generic constant that   might change  from line to line due to the  use of   the Sobolev embeddings and  elliptic  estimates. Note that $C_4$ is independent of $\mu$ and $r$.
\begin{lemma}\label{refereelemma1}
 Under the assumptions \eqref{kortum bound big ball} and      \eqref{small energy of GL}    with   a sufficiently small $\hat{\epsilon}$ (defined in \eqref{defepsilon1} below),  we have
\[3/4\leq  |\uu_\e(\cdot,t_0)| \leq 5/4 \text{ on } B_r(x_0)\quad \text{ for } \e\leq r/4.\label{rho near 11}\]
\end{lemma}

\begin{proof}
Without loss of generality, we assume $x_0=0$. For brevity we write $B_r(0)$  {as} $B_r$. Since all    arguments are  made at $t=t_0$, we shall suppress the time dependence of $\uu_\e$.  

{\bf Step 1.}
There exists  $\hat{C}>1$   depending  on $\hat{c}$ such that   for any $x_1 \in B_r$ we have
\[  |\uu_\e(x)-\uu_\e(y)|\leq \hat{C} \sqrt{\frac{|x-y|}\e},\quad \forall x,y\in   B_{\e}(x_1).\label{kortum holder}\]
To prove \eqref{kortum holder}, let  $\hat{\uu}_\e(z)=\uu_\e(x_1+\e z): B_2\to \R^3$. Then  we can write \eqref{Ginzburg-Landau stat}  {as} 
\[\mu \nabla\div \hat{\uu}_\e(z)  +\Delta \hat{\uu}_\e(z)=\e^2\btau_\e(x_1+\e z ) + D F (\hat{\uu}_\e(z)  ),  \quad z\in B_2.\label{kortum est p f} \]
It follows from \eqref{kortum bound big ball} and a change of variable that   $\{\e^2\btau_\e(x_1+\e \cdot )\}_{\e>0}$  is uniformly  bounded in $L^2(B_2)$.  Using \eqref{bulk trick1}, we can estimate 
\[\big \| D F (\hat{\uu}_\e  )\big\|^2_{L^2(B_2)}\overset{\eqref{bulk}}=\e^{-2}\big\|f'(|\uu_\e|  )\big\|^2_{L^2(B_{2\e}(x_1))} \leq \e^{-2}C_4 \int_{B_{2\e}(x_1)}    
F( \uu_\e  )\, dx\overset{\eqref{kortum bound big ball}}\leq \hat{c}^2 C_4. \]
Altogether, we prove that the terms on the   right-hand side of \eqref{kortum est p f} is bounded   in $L^2(B_2)$.
 Invoking  the interior estimate for  elliptic system  (cf.  \cite[Theorem 4.9]{MR3099262}), we obtain
\[\|\hat{\uu}_\e\|_{W^{2,2}(B_1)}\leq C_4( \hat{c}+\|\hat{\uu}_\e\|_{L^2(B_2)}).\label{morreyh2}\]
 Note that $C_4$ is independent of $\mu$. Now  we    estimate the last term by 
\begin{align*}
\|\hat{\uu}_\e\|_{L^2(B_2)}^2 & \leq C_4\(   1+ \e^{-2} \int_{B_{2\e}(x_1) \cap 
\{  x\mid |\uu_\e(x)|\geq 2 \}} \( | \uu_\e|-1\)^2\)  \\
& \overset{\eqref{bulk2}}\leq  C_4\(1+ \e^{-2} \int_{B_{2\e}(x_1)} f (| \uu_\e|) \)\overset{\eqref{kortum bound big ball}}\leq  (1+\hat{c}^2)C_4. 
\end{align*}
Substituting this estimate in  \eqref{morreyh2} and using Morrey's embedding $W^{2,2}\hookrightarrow C^{1/2}$, we obtain  $\|\hat{\uu}_\e\|_{C^{1/2}(\bar{B}_1)}\leq C_4\hat{c}$. Rescaling back, we find    \eqref{kortum holder}   with  $$\hat{C}:=C_4\hat{c}.$$

{\bf Step 2:}
 We claim that   with the choice    
 \[\hat{\epsilon}<   16^{-8} C_4^{-2}\hat{c}^{-2}=16^{-8} \hat{C}^{-2},\label{defepsilon1}\]      we have  {either}  \eqref{rho near 11} or 
\begin{align} 
  |\uu_\e| \leq 1/4 \text{ on } B_r\quad \text{ for } \e\leq r/4.\label{rho near 0}
\end{align}
Indeed, if neither  of them were valid, then
   \[ \exists ~\e\in (0,r/4)\text{ and } x_1\in B_r~\text{ s.t. }~|\uu_\e(x_1)|\in (1/4,3/4)\cup (5/4,+\infty).\label{contradiction1}\]
Since $16 \hat{\epsilon}<1$, it follows from \eqref{kortum holder} that 
\[|\uu_\e(x_1)-\uu_\e(x)| <    4^{-7}\qquad   \text{ for }  x\in B_{ 16 \hat{\epsilon}\e }(x_1).\label{lipuee1}\]
Using this and \eqref{bulk2}, we deduce one of the  following two cases for $x\in B_{16  \hat{\epsilon}\e}(x_1)$:

a) If $|\uu_\e(x_1)|>3$, then $|\uu_\e(x)|>2$ and thus $f( |\uu_\e(x)|)\geq 1$. 

b) If  $|\uu_\e(x_1)|\in   (1/4,3/4)\cup (5/4,3]$, then $f(|\uu_\e(x_1)|)\geq   1/{16}$. By the third condition in \eqref{bulk2} and \eqref{lipuee1}, we have $f(|\uu_\e(x)|)>   1/{32}$.

To summarize, we have the following inequality: 
\[F(\uu_\e(x))=f(|\uu_\e(x)|)>    1/{32}\qquad \forall x\in B_{16\hat{\epsilon}\e}(x_1).\]
Integrating this inequality over $B_{16\hat{\epsilon}\e}(x_1)$ and using the assumption $\e < r/4$, we find   
$$\e^{-2}\int_{B_{ 16\hat{\epsilon}\e}(x_1)} F(\uu_\e(x))\,dx> 8\pi\hat{\epsilon}^2.$$ However, this   would contradict \eqref{small energy of GL} since $B_{ 16\hat{\epsilon}\e}(x_1)\subset B_{2r}(x_0)$. So \eqref{contradiction1} is not valid and the claim is proved.

{\bf Step 3:}
We shall  rule out \eqref{rho near 0}.

Assuming \eqref{rho near 0}, we deduce from \eqref{bulk2} that  $F(\uu_\e)=|\uu_\e|^2$. By  \eqref{Ginzburg-Landau stat} we have  
\begin{align}\label{GL equ near 0}
 \mu \nabla (\div \uu_\e)  +\Delta \uu_\e   -  2\e ^{-2} \uu_\e =\btau_\e \,\,\,\text{in}~ B_r. 
\end{align}
For $z\in B_1$, we introduce  $\tilde{\uu}_\e(z):=\uu_\e(rz)$ and $\tilde{\btau}_\e(z):=\btau_\e(rz)$.  Then  
\begin{align}\label{GL equ near 1}
 \mu \nabla (\div \tilde{\uu}_\e)  +\Delta \tilde{\uu}_\e   -  2r^2\e ^{-2} \tilde{\uu}_\e =r^2\tilde{\btau}_\e  \,\,\,\text{in}~ B_1. 
\end{align}
By the interior estimate for  elliptic system, we have 
\begin{align}\label{H^2 ue}
\| \tilde{\uu}_\e\|_{W^{2,2}(B_{1/2 })}+r^2\e^{-2}\| \tilde{\uu}_\e\|_{L^2(B_{1/2 })}\leq C_4\( \|\tilde{\btau}_\e\|_{L^2(B_1)}+\| \tilde{\uu}_\e\|_{L^2(B_1)}\).
\end{align}
Indeed, one can adapt the proof of    \cite[Theorem 4.9]{MR3099262} to gain  the term $r^2\e^{-2}\| \tilde{\uu}_\e\|_{L^2(B_{1/2 })}$.   By   \eqref{H^2 ue}, \eqref{kortum bound big ball} and the conclusion in step 2, 
we find $$r^2\e^{-2} \|\uu_\e\|_{L^2(B_{r/2})}\leq C_4\(\| \btau_\e\|_{L^2(B_r)}+\| \uu_\e\|_{L^2(B_r)}\)\leq C_4(\hat{c}+1).$$
 This        implies   that   $\uu_\e\to 0$ strongly  in $L^2(B_{r/2})$, which  contradicts \eqref{volume convergence} since  $B_{r/2}\subset K\subset\subset \O_t^+$. Therefore,   we rule out \eqref{rho near 0} and obtain \eqref{rho near 11}.
\end{proof}

By \eqref{rho near 11},  
 we have polar   decomposition  $ \uu_\e=\rho_\e \vv_\e$ where
\[  \rho_\e=|\uu_\e|,\quad \vv_\e=\uu_\e/|\uu_\e|~\text{ in } ~B_r(x_0).\label{polar decom}\] 
We set    
\[\ww_\e:=(\vv_\e, \rho_\e),\label{defwvector}\]
and  define   the   projection   
\[\mathbf{a}_{\|}:=(\mathbb{I}_3-\vv_\e\otimes \vv_\e)\mathbf{a}\label{projectioninv}\]
 for  a vector field $\mathbf{a}$.

    \begin{lemma}\label{equivalencepolar}
    Under the assumptions $\e\leq r/4$,  \eqref{kortum bound big ball} and      \eqref{small energy of GL}   for   $\hat{\epsilon}$  defined in \eqref{defepsilon1},    $\rho_\e$  satisfies the following equation  in $B_r(x_0)$.
\begin{align} \label{polar decompose equ1}
 \Delta \rho_\e    -\e^{-2} f'(\rho_\e )&+ \mu  \nabla^2\rho_\e :  (\vv_\e  \otimes \vv_\e) +\mu\rho_\e (\vv_\e\cdot\nabla) \div \vv_\e \nonumber\\
  &=\btau_\e \cdot \vv_\e+ \mathcal{N}_{1,\e}(\nabla\ww_\e,\nabla\ww_\e),
\end{align}
 where $\mathcal{N}_{1,\e}(\cdot,\cdot):\R^{4\times 3}\times \R^{4\times 3}\mapsto \R$ is  bilinear  with uniformly bounded coefficients. Also, $\vv_\e$ satisfies the following equation  in $B_r(x_0)$. 
\begin{align}\label{polar decompose equ2}
    \rho_\e \Delta \vv_\e &+\mu~ ((\nabla^2\rho_\e)  \vv_\e)_{\|}  +\mu \rho_\e  \(\nabla (\div \vv_\e)\)_{\|}\nonumber \\
     &= (\btau_\e)_{\|} +  \mathcal{N}_{2,\e}(\nabla\ww_\e,\nabla\ww_\e),
\end{align}
where    $\mathcal{N}_{2,\e}(\cdot,\cdot):\R^{4\times 3}\times \R^{4\times 3}\mapsto \R^3$  is bilinear with  uniformly bounded coefficients.

\end{lemma}
\begin{proof}
To simplify the presentation we will suppress  the subscript $\e$.
By \eqref{polar decom} we have    $|\vv|^2\equiv 1$ and thus 
 \[\Delta \vv\cdot\vv=-|\nabla \vv|^2.\label{bocher}\] 
Substituting  \eqref{polar decom}  into \eqref{Ginzburg-Landau stat}, we find  
\begin{align}\label{polar decompose equ}
\btau 
=&~(\Delta \rho)  \vv +2(\nabla\rho \cdot \nabla) \vv +\rho \Delta \vv -\e^{-2} f'(\rho ) \vv \nonumber\\
&+\mu~ (\nabla^2\rho ) \vv  +\mu \rho  \nabla (\div \vv) +\mu \(\nabla \rho \cdot \p_i \vv \)_{1\leq i\leq 3}   +\mu\nabla\rho  (\div \vv).
\end{align}
Testing  \eqref{polar decompose equ}  with   $\vv $ and using \eqref{bocher}, we obtain  
  \begin{align}\label{referee4}
&- \Delta \rho     +\e^{-2} f'(\rho )\nonumber\\
=& - \btau \cdot \vv+\mu  \nabla^2\rho :  (\vv  \otimes \vv) +\mu\rho (\vv\cdot\nabla) \div \vv\nonumber \\
&+\mu(\nabla\rho\cdot\p_i \vv)v_i+\mu(\vv\cdot\nabla\rho)\div \vv-\rho  |\nabla \vv |^2.
\end{align}
The terms in the last line  are bilinear   with respect to $\nabla\ww=(\nabla\vv,\nabla\rho)$, and we  denote their sum by $-\mathcal{N}_{1,\e}(\nabla\ww,\nabla\ww)$. By \eqref{rho near 11}, it has bounded coefficients and thus   \eqref{polar decompose equ1} is proved.

To derive \eqref{polar decompose equ2}, we shall use the following identities.
\[\vv_{\|}=0~\text{ and }~ (\p_i \vv)_{\|}=\p_i \vv.\label{wellknown1}\]
These combined with \eqref{bocher} lead to 
\[(\Delta\vv)_{\|}=\Delta \vv+|\nabla\vv|^2\vv.\label{wellknown2}\]
Now applying  $(\cdot)_{\|}$  to the equation  in  \eqref{polar decompose equ}, and using \eqref{wellknown1} and \eqref{wellknown2}, we obtain
  \begin{align}\label{referee5}
\btau_{\|} 
=&~ \rho \Delta \vv +\mu~ ((\nabla^2\rho)  \vv)_{\|}  +\mu \rho  \(\nabla (\div \vv)\)_{\|}  \nonumber\\
&+ 
2((\nabla\rho \cdot \nabla) \vv)_{\|} +\rho \vv|\nabla\vv|^2+\mu \Big(\(\nabla \rho \cdot \p_i \vv \)_{1\leq i\leq 3}\Big)_{\|}   +\mu(\nabla\rho)_{\|}  (\div \vv).
\end{align}
The terms in the second  line of the above equation are  bilinear  with respect to $\nabla\ww$,  and we  denote their sum by $-\mathcal{N}_{2,\e}(\nabla\ww,\nabla\ww)$. By \eqref{rho near 11}, it has bounded coefficients, and thus \eqref{polar decompose equ2} is proved.

\end{proof}

\begin{proof}[Proof of Proposition \ref{partial regularity}]
We first show that, by choosing $\hat{\epsilon}$ and $\mu$  sufficiently small, we have 
\begin{align}\label{CZ est3}
\|\nabla^2 (\vv_\e, \rho_\e)\|_{L^{4/3}(B_{r/2}(x_0))} 
\leq  \blue{2} C_4r^{-2}.
\end{align}
Recalling \eqref{defwvector}, 
we deduce from \eqref{small energy of GL} and \eqref{rho near 11} that   
\[\|\nabla \ww_\e\|_{L^2( B_r(x_0))}\leq  4\hat{\epsilon}~  \text{ on } B_r(x_0)~ \text{ for } \e\leq r/4.\label{OS flow1} \]
Recalling  that $r<1$, let $\chi$ be a  $C^2$ cut-off function such that 
\[\label{differentiatechi}
\chi\equiv \begin{cases}
1\text{ in }B_{r/2}(x_0)\\
0\text{ in } {B_1(x_0)}\backslash B_r(x_0)
\end{cases} \text{ and } |\nabla^\ell \chi|\leq 8r^{-\ell} \text{ in } B_1(x_0)\text{ for } \ell\in \{1,2\}.
\] and let    
$\bar{\ww}_\e:=(\bar{\vv}_\e,\bar{\rho}_\e)$ with 
\[\bar{\rho}_\e=\chi (\rho_\e-1)\quad \text{ and }\quad  \bar{\vv}_\e=\chi \vv_\e.\label{bar def}\]
Multiplying   \eqref{polar decompose equ2} by  $\chi$ and using the linearity of $\mathbf{a}_{\|}$ about $\mathbf{a}$ (cf.   \eqref{projectioninv}), we find   
\begin{align}\label{CZ est}
\rho_\e \Delta\bar{\vv}_\e+&\rho_\e [\chi,\Delta] \vv_\e + \mu \Big([\chi,\nabla^2] (\rho_\e-1)  \vv_\e\Big)_{\|}+
\mu\Big( \nabla^2 \bar{\rho}_\e \vv_\e\Big)_{\|} \nonumber\\&+\mu  \rho_\e~ \Big([\chi,\nabla\div] \vv_\e\Big)_{\|} +\mu\rho_\e~\Big( \nabla (\div \bar{\vv}_\e)\Big)_{\|}\nonumber \\
     &=    (\btau_\e)_{\|} \chi+  \mathcal{N}_{2,\e}(\chi \nabla\ww_\e,\nabla\ww_\e)\qquad  \text{ in } B_1(x_0),
\end{align}
and $\bar{\vv}_\e |_{\p B_1(x_0)}=0$.
  For  brevity we denote      $L^p(B_1(x_0))$ by $L^p$.
Note that the commutators in \eqref{CZ est} involve at most first order derivatives of $\ww_\e=(\vv_\e,\rho_\e)$, which satisfies   \eqref{OS flow1}.  Now applying  the $L^p$-estimate for  elliptic  equation    \cite[pp. 109]{MR2435520} (componentwise) in \eqref{CZ est}, and invoking  \eqref{OS flow1} and  \eqref{rho near 11}, we have 
 \begin{align}\label{polar decompose equ4}
\|\nabla^2 \bar{\vv}_\e\|_{L^{4/3}} 
\leq  ~&C_4\Big(  r^{-2} +r^{-1}+ \mu \|\nabla^2  \bar{\ww}_\e\|_{L^{4/3}}+\left\| \mathcal{N}_{2,\e}(\chi \nabla\ww_\e,\nabla\ww_\e)\right\|_{L^{4/3}}\Big).
\end{align}
Note that the prefactors $r^{-1}$ and $r^{-2}$ are due to the differentiation of $\chi$  (cf.  \eqref{differentiatechi}), and that $C_4$     is independent of $r$.
To estimate the last term, we employ the bi-linearity of $\mathcal{N}_{2,\e}$, \eqref{OS flow1} and \eqref{rho near 11}:
\[\label{polar decompose equ6}
\begin{split}
&\left\| \mathcal{N}_{2,\e}(\chi \nabla\ww_\e,\nabla\ww_\e)\right\|_{L^{4/3}}\\
\leq ~& \left\| \mathcal{N}_{2,\e}( \nabla\bar{\ww}_\e,\nabla\ww_\e)\right\|_{L^{4/3}}+\left\| \mathcal{N}_{2,\e}( \nabla\chi\otimes\ww_\e,\nabla\ww_\e)\right\|_{L^{4/3}}+C_4 r^{-1}\\
\leq  ~& C_4\(\|\nabla\bar{\ww}_\e\|_{L^4}\|\nabla\ww_\e\|_{L^2}+r^{-1}\)\\
\leq  ~& C_4\(\|\nabla^2\bar{\ww}_\e\|_{L^{4/3}}\|\nabla\ww_\e\|_{L^2}+r^{-1}\).
\end{split}
\]
Note that in the last step we used the Sobolev's embedding  $W^{1,4/3}(B_1)\subset  L^4(B_1)$.
Combining \eqref{polar decompose equ6} with \eqref{polar decompose equ4}, we obtain 
 \begin{align}\label{CZ est1} 
\|\nabla^2 \bar{\vv}_\e\|_{L^{4/3}} 
\leq  ~&C_4\(r^{-2}+ \mu \|\nabla^2 (\bar{\vv}_\e,\bar{\rho}_\e)\|_{L^{4/3}}+  { \|\nabla^2\bar{\ww}_\e\|_{L^{4/3}}}\|\nabla\ww_\e\|_{L^2}\).
\end{align}
Now we turn to the estimate of $\rho_\e$.
 Using \eqref{rho near 11} and  \eqref{bulk2},  we have $f'(\rho_\e)=2(\rho_\e-1)$ in $B_r(x_0)$. Now multiplying   \eqref{polar decompose equ1} by  $\chi$ and using the linearity of \eqref{projectioninv}, we find 
\begin{align*}
&-2\e^{-2} \bar{\rho}_\e+\Delta \bar{\rho}_\e+[\chi,\Delta] (\rho_\e-1)      +\mu (\vv_\e \otimes \vv_\e):  \nabla^2\bar{\rho}_\e\\
&+\mu (\vv_\e\otimes \vv_\e):  [\chi,\nabla^2](\rho_\e-1)  +\mu\rho_\e  \vv_\e \cdot (\nabla \div \bar{\vv}_\e) +\mu\rho_\e   \vv_\e \cdot \( [\chi,\nabla\div]\vv_\e\) \nonumber\\
  &=\chi \btau_\e \cdot \vv_\e+ \mathcal{N}_{1,\e}(\chi \nabla\ww_\e,\nabla\ww_\e).
\end{align*}
In the same way as we did for  \eqref{CZ est1}, we find 
 \begin{align*}
\|\nabla^2 \bar{\rho}_\e\|_{L^{4/3}} 
\leq  ~& C_4\(r^{-2}+ \mu \|\nabla^2 (\bar{\vv}_\e,\bar{\rho}_\e)\|_{L^{4/3}}+\left\| \mathcal{N}_{1,\e}(\chi \nabla\ww_\e,\nabla\ww_\e)\right\|_{L^{4/3}}\)\\
\leq  ~& C_4\(r^{-2}+\mu \|\nabla^2 (\bar{\vv}_\e,\bar{\rho}_\e)\|_{L^{4/3}}+ \|\nabla^2\bar{\ww}_\e\|_{L^{4/3}}\|\nabla\ww_\e\|_{L^2}\).
\end{align*}
Combining this with  \eqref{CZ est1} and \eqref{OS flow1} we discover  
\[\label{vrhoimproved}
\|\nabla^2 (\bar{\vv}_\e,\bar{\rho}_\e)\|_{L^{4/3}(B_r(x_0))} 
\leq C_4\(r^{-2}+ \max\{ \hat{\epsilon}, \mu\}\|\nabla^2(\bar{\vv}_\e,\bar{\rho}_\e)\|_{L^{4/3}(B_r(x_0))}\).
\]
\blue{
Note that before Lemma \ref{refereelemma1}, we have assumed that  $C_4>1$ and $\hat{c}>1$.  Recall also the choice of   $\hat{\epsilon}$ in  \eqref{defepsilon1}.  By choosing $$\Lambda= 16^{-8} C_4^{-2}\text{   in \eqref{defofepsilon0}},$$
we find $C_4 \max\{ \hat{\epsilon}, \mu\}<1/2$. This combined with   \eqref{vrhoimproved} yields  
$$\|\nabla^2 (\bar{\vv}_\e,\bar{\rho}_\e)\|_{L^{4/3}(B_r(x_0))} 
\leq 2C_4 r^{-2}.
$$
 In view of \eqref{differentiatechi} and \eqref{bar def}, this  implies   \eqref{CZ est3}.}

Now using \eqref{strongcovuu2}, we have $\rho_{\e_k}(\cdot,t_0)\xrightarrow{k\to\infty }    |\uu|(\cdot,t_0)=1$  strongly  in $L^2(B_r(x_0))$. Thus, using  \eqref{rho near 11} we find  
$$\|\vv_{\e_k}-\uu\|^2_{L^2(B_r(x_0))}\leq 2\|\uu_{\e_k}-\uu \rho_{\e_k}\|^2_{L^2(B_r(x_0))}\xrightarrow{k\to\infty } 0.$$
These  together  with \eqref{CZ est3} and the Gagliardo–Nirenberg interpolation inequality yield 
\begin{align}\label{strongvvrho}
\( \vv_{\e_k}, \rho_{\e_k} \) \xrightarrow{k\to \infty} \(  \uu, 1\)  \text{ strongly  in } W^{1,2}(B_{r/2}(x_0)).
\end{align}
Finally, using \eqref{rho near 11}  and \eqref{strongvvrho} we find 
\begin{align*}
\nabla \uu_{\e_k}= \rho_{\e_k} \nabla \vv_{\e_k}+\vv_{\e_k} \nabla\rho_{\e_k} \xrightarrow{k\to \infty} \nabla \uu\text{ strongly in }L^2(B_{r/2}(x_0)),
\end{align*}
and  finish the proof of \eqref{strongcovuu}.

 \end{proof}

\begin{proof}[Proof of Theorem \ref{main thm oseen frank}]
We employ   the  covering argument in  \cite{MR990191}.  For  any test function $\bPsi\in C_c^1(\O_t^+;\R^3)$, we choose $K=\overline{\mathrm{supp} (\bPsi)}\subset\subset \O_t^+$, and  we      define the singular set  at time $t \in(0, T]$ by
\[\label{discretesingularset}
\Sigma_t:=\bigcap_{0<r<1}\left\{x \in K\mid B_{2r}(x)\subset K, \varliminf _{k \rightarrow \infty} \int_{B_{2r} (x )}\( \frac{1}{2}\left|  \nabla \uu_{\e_k}\right|^{2}+\frac{F(  \uu_{\e_k})}{\e_{k}^2} \)\, dx>\frac{\hat{\epsilon}^2} 2\right\}.
\]
We claim    that $\Sigma_t$ is discrete.  Indeed, choose  an arbitrary finite set  $\{y_j\}_{j=1}^J\subset \Sigma_t$ with mutually disjoint balls $\{B_{2r_j}(y_j)\}_{j=1}^J$ inside $K$ with $r_j<1/2$. Since $J$ is finite, there exists $k_J>0$ such that for any $k\geq k_J$ we have 
\[\int_{B_{2r_j} (y_j )}\( \frac{1}{2}\left|  \nabla \uu_{\e_k}\right|^{2}+\frac{F(  \uu_{\e_k})}{\e_{k}^2} \)\, dx>\frac{\hat{\epsilon}^2} 4\quad \text{ for all }j\in \{1,\cdots, J\}.\]
Combined with \eqref{kortum bound big ball}, this implies   
\[\hat{c}^2\geq \int_{\bigsqcup_{j=1}^J B_{2r_j} (y_j )}\( \frac{1}{2}\left|  \nabla \uu_{\e_k}\right|^{2}+\frac{F(  \uu_{\e_k})}{\e_{k}^2} \)\, dx>\frac{\hat{\epsilon}^2} 4J.\]
As a result, $J\leq 4\hat{c}^2\hat{\epsilon}^{-2}$  and thus   $\Sigma_t$ is discrete.
Therefore w.l.o.g. we can  assume that $\Sigma_t=\{x_0\}$  and $B_{2r}(x_0)\subset K$. 
Let $\eta\in C_c^1(B_2(0))$ be a cut-off function which $\equiv 1$ in $B_1(0)$. Then    
\[\bPsi_\delta(x):=\bPsi(x)\(1-\eta(\tfrac{ x-x_0}\delta)\)\xrightarrow{\delta\to 0} \bPsi(x)\text{ for any }x\neq x_0.\label{varphi a.e. cov}\] 
 It is obvious that $\bPsi_\delta=0$   in $B_{\delta}(x_0)$. By \eqref{discretesingularset} and Proposition \ref{partial regularity},  we have    
 \[\nabla \uu_{\e_k}\xrightarrow{k\to\infty} \nabla \uu\text{ strongly in }L^2(K\backslash B_\delta(x_0)).\]
Using these properties,  we can apply  $ \wedge \uu_{\e_k} \cdot \bPsi_\delta$ to both sides of   \eqref{Ginzburg-Landau}, integrate by parts and then send  $k\to \infty$:
\begin{align}\label{varphi conv}
&\int_\O \p_t \uu \wedge \uu  \cdot \bPsi_\delta\,dx+\mu  \int_\O \(\div \uu\) (\rot   \uu) \cdot\bPsi_\delta\,dx\nonumber\\
&+\int_\O (\nabla \uu \wedge \uu)\cdot \nabla\bPsi_\delta\,dx-\mu  \int_\O \(\div \uu \)  ( \rot\bPsi_\delta)\cdot \uu\,dx=0.
\end{align}
 Note that we have also used $\p_t \uu_{\e_k} \wedge \uu_{\e_k}\xrightarrow{k\to\infty} \p_t \uu \wedge \uu$ weakly in $L^2(0,T;L^{6/5}(\Omega))$, which is  due to  Proposition \ref{global control prop}. 
By \eqref{varphi a.e. cov} and  the regularity of $\uu$ (cf. \eqref{orientation integrable1} and \eqref{orientation integrable2}),  we can  send  $\delta\to 0$ in the first and the second integrals in \eqref{varphi conv} using the dominated convergence theorem. Concerning the third one, we have 
\begin{align}
 \int_\O (\nabla \uu \wedge \uu)\cdot \nabla\bPsi_\delta\,dx&= \int_\O \(1-\eta(\tfrac{ x-x_0}\delta)\)(\nabla \uu \wedge \uu)\cdot \nabla\bPsi\,dx\nonumber\\
&- \sum_{i=1}^3\int_{B_{2\delta}(x_0)} \frac {1}\delta  (\p_i\eta)(\tfrac{ x-x_0}\delta)  ~\p_i \uu \wedge \uu\cdot\bPsi\,dx.\label{passtolimit1}
\end{align}
  We claim that the second integral on the right-hand side  vanishes as $\delta\to 0$. Indeed, by  the  Cauchy--Schwarz  inequality we have  
\begin{align}
  &\left|\sum_{i=1}^3\int_{B_{2\delta}(x_0)} \frac {1}\delta  (\p_i\eta)(\tfrac{ x-x_0}\delta)  ~\p_i \uu \wedge \uu\cdot\bPsi\,dx\right|  \nonumber\\
  & \leq  C\|\bPsi\|_{L^\infty}  \|\nabla\eta\|_{L^2(B_2)}\| \nabla \uu\|_{L^2(B_{2\delta}(x_0))}\xrightarrow{\delta\to 0}0.
   \end{align}
Now using  $\lim_{\delta \to 0}\eta(\tfrac{ x-x_0}\delta)= 0$ for any $x\neq x_0$,  we can send $\delta \to 0$  in  \eqref{passtolimit1}  and obtain
$$\int_\O (\nabla \uu \wedge \uu)\cdot \nabla\bPsi_\delta\,dx\xrightarrow{\delta\to 0}\int_\O  (\nabla \uu \wedge \uu)\cdot \nabla\bPsi\,dx.$$
By the same  argument we can compute  the fourth integral in \eqref{varphi conv} and find
\[   \int_\O \(\div \uu \)  ( \rot\bPsi_\delta)\cdot \uu\,dx \xrightarrow{\delta\to 0}\int_\O \(\div \uu \)  ( \rot\bPsi)\cdot \uu\,dx.\]
 Using  the above two formulas, we can send  $\delta\to 0$ in  \eqref{varphi conv} and obtain  \eqref{weak OF flow}.
 \end{proof}

 \noindent{\it Acknowledgements}.   
   Y. Liu is partially supported by NSF of China under Grant  11971314.
We thank an   anonymous referee for meticulous feedback and invaluable comments.
\appendix

\section{Proof of Proposition \ref{prop initial data}}\label{app initial}

 \begin{proof}[Proof of Proposition \ref{prop initial data}]
 
We first recall that  $\sigma=1$ (cf.  \eqref{normalization}), $I_0\subset \O$ is the initial interface and   $\eta_0$ is the cut-off function in \eqref{cut-off eta delta}. Then we define
\begin{equation}\label{cut-off initial}
s_\e (x):= \eta_0\(  x \)\theta\(\frac{d_{I_0}(x)}\e \)+\Big(1-\eta_0\(  x \)\Big)\1_{\O^+_0},
\end{equation}
where  $\theta(z)$   is  the solution of the ODE
\begin{align}
-\theta''(z)+f'(\theta)=0,\quad \theta(-\infty)=0,~\theta(+\infty)=1.\label{travelling wave}
\end{align}
We note that $d_{I_0}$ is Lipschitz continuous in  $\O$,  and thus by Rademacher's theorem we have   $|\nabla d_{I_0}|\leq 1$ a.e. in $\O$.
Recalling   \eqref{initialvectorfield}, we define
 \begin{equation}\label{sharp initial}
\uu_\e^{in} (x):=s_\e (x) \uu^{in}(x).
\end{equation}
One can verify that   $\uu_\e ^{in}\in W^{1,2}_0(\O)\cap L^\infty(\O)$, $\|\uu_\e ^{in}\|_{L^\infty(\O)}\leq 1$, and
\begin{align}\label{transition initial data}
\uu_\e ^{in}=\left\{
\begin{array}{rl}
 \uu^{in} & \quad \text{if}~ x\in \O^+_0\backslash B_{2\delta_0}(I_0),\\
\theta\(\frac{d_{I_0}}\e \) \uu^{in} & \quad \text{if}~ x\in B_{\delta_0}(I_0),\\
0&\quad \text{if}~ x\in \O^-_0\backslash B_{2\delta_0}(I_0).
\end{array}
\right.
\end{align}
So the condition \eqref{bdd0} is verified.
To verify   the others, we first  compute the modulated energy  in \eqref{entropy} for   the initial  datum  $\uu_\e^{in} $.  We  write \eqref{cut-off initial}  as
\begin{equation}\label{shat}
s_\e(x)= \theta\(\frac{d_{I_0}(x)}\e \)  +\hat{s}_\e(x),
\end{equation}
where
$
\hat{s}_\e(x):=\(1-\eta_0\( x \)\)\(\1_{\Omega^+_0} -\theta\(\frac{d_{I_0}(x)}\e \)\)
$.
Invoking   \eqref{cut-off eta delta} and the exponential convergence  of $\theta(z)$ as $z\to\pm\infty$ (cf.  \eqref{travelling wave}), we deduce  that
\begin{equation}\label{hat S decay}
\|\hat{s}_\e\|_{L^\infty(\O)}+\|\nabla \hat{s}_\e\|_{L^\infty(\O)}\leq Ce^{-\frac C\e },
\end{equation}
 for some constant $C>0$ that only depends on $I_0$. By a Taylor's expansion, we find 
$$F(\uu_\e^{in} )=f(\theta +\hat{s}_\e)=f(\theta)+ O(e^{-C/\e }).$$
Combining  \eqref{sharp initial}, \eqref{shat} with  \eqref{hat S decay}, we obtain  
$$
|\nabla \uu_\e^{in} |^2= \e^{-2}\theta'^2 +\theta^2 |\nabla\uu^{in}|^2+O(e^{-C/\e })(|\nabla\uu^{in}|^2+1).
$$
Note that we have also employed the identities  $\p_{x_i} \uu^{in}\cdot\uu^{in}=0$ a.e. in $\O$.
Recalling  \eqref{psi},    we have   
\[\psi_\e\Big|_{t=0}=\int_0^{\theta+\hat{s}_\e}\sqrt{2f(s)}\, ds: \O\mapsto [0,1].\label{psiint}\]
So   we can compute 
\begin{align}\label{initial cal}
&\frac{\varepsilon}2 \left|\nabla \uu_\e^{in}\right|^2+\frac{1}{\varepsilon} F(\uu_\e^{in})-\bxi\cdot\nabla \psi_\e\Big|_{t=0}\nonumber \\
=&\frac 1{2\e} \theta'^2 +\frac{1}{\varepsilon} f(\theta  )-\e^{-1} \bxi\cdot \nn_{I_0} \theta' \sqrt{2f(\theta  )}+\frac \e 2\theta^2 |\nabla\uu^{in}|^2+O(e^{-C/\e })(|\nabla\uu^{in}|^2+1).
  \end{align}
 It follows from   \eqref{def:xi} that  $1-\bxi\cdot \nn_{I_0}=O(d_I^2)$.  So we have
 $$\e^{-1} \bxi\cdot \nn_{I_0} \theta' \sqrt{2f(\theta  )}=\e^{-1} \theta' \sqrt{2f(\theta)}+O(e^{-C/\e })+ \e^{-1} O(d_{I_0}^2)  \theta' \sqrt{2f(\theta)}.$$
 Note that the last term can be written as $$\e^{-1} O(d_{I_0}^2)  \theta' \sqrt{2f(\theta)}= O(\e)z^2  \theta'(z) \sqrt{2f(\theta(z))}\big|_{z= \frac{d_{I_0}(x)}{\e}  }.$$
 Substituting the above two equations into  \eqref{initial cal}, we find  
 \begin{align}\label{initial cal1}
&\int_\O\(\frac{\varepsilon}2 \left|\nabla \uu_\e^{in}\right|^2+\frac{1}{\varepsilon} F(\uu_\e^{in})-\bxi\cdot\nabla \psi_\e\)\, dx\nonumber \\
=&\int_\O \underbrace{\(\frac 1{2\e} \theta'^2 +\frac{1}{\varepsilon} f(\theta  )-\frac 1{\e}   \theta' \sqrt{2f(\theta)  }\)}_{=0}\,dx\nonumber\\
&+\int_\O \frac \e 2\theta^2 |\nabla\uu^{in}|^2\,dx+O(e^{-C/\e })\int_\O (|\nabla\uu^{in}|^2+1)\,dx +O(\e)\qquad \text{ at } t=0.
  \end{align}
  Note that the integrand of the first integral on the right-hand side of \eqref{initial cal1} vanishes due to  the identity  $\theta'^2(z)=2f(\theta(z))$, which follows from    \eqref{travelling wave}.
  Now we turn to the first term in \eqref{entropy}. Using \eqref{hat S decay} we can estimate 
 \[ |\div \uu_\e^{in}|^2\leq 2|\nabla\theta\cdot\uu^{in}|^2+2\theta^2|\div \uu^{in}|^2+O(e^{-C/\e })(1+|\div \uu^{in}|^2).\label{div cal}\]
 By  the   exponential decay of $\theta'(z)$ as $z\to\pm\infty$,  we deduce that   
  \[|\nabla\theta\cdot\uu^{in}|=\begin{cases}
   \Big|\frac{d_{I_0} }\e \theta'\(\frac{d_{I_0} }\e\)\frac{\uu^{in}\cdot\nn_{I_0}}{d_{I_0} }\Big|\leq C\Big| \frac{\uu^{in}\cdot\nn_{I_0}}{d_{I_0}}\Big|\quad \text{ in } B_{\delta_0}(I_0)\backslash I_0,\\
   \\
   \Big|\frac1\e \theta'\(\frac{d_{I_0} }\e\)\uu^{in}\cdot\nn_{I_0}\Big|\leq e^{-\frac C\e}\quad\qquad \text{ in } \O\backslash B_{\delta_0}(I_0).
  \end{cases}
\]
Using this,  \eqref{initialvectorfield}  and    Hardy's inequality (cf. \cite{MR1655516}), we find 
\begin{align}
\int_\O |\nabla\theta\cdot\uu^{in}|^2\, dx & \leq C \int_{I_0}\int_{-\delta_0}^{\delta_0}  \Big| \frac{\uu^{in}\cdot\nn_{I_0}}{d_{I_0}}\Big|^2\, dr\,d\mathcal{H}^{d-1}+C\nonumber\\
& \leq C\( \int_\O|\nabla \uu^{in}|^2\, dx+1\).
\end{align}
Combining this with \eqref{div cal} and \eqref{initial cal1} we derive  $E_\e  [\uu_\e^{in}  | I_0]\leq C\e$. Recalling  \eqref{earlycal1}, we have also obtained \eqref{bdd1}.
To  verify   \eqref{initial}, we shall compute \eqref{gronwall2new} at $t=0$. By \eqref{psiint}, we see that 
$$B  [\uu_\e^{in}  | I_0]= 2\int_\O   \Big( \tfrac{\chi+1}2-\psi_\e  \Big)\eta\circ d_I  \, dx.$$
We shall only give the estimate in $B_{\delta_0}(I_0)\cap \O_0^+$ because   the one in  $B_{\delta_0}(I_0)\cap \O_0^-$ follows in  the same way, and the one in $\O\backslash   B_{\delta_0}(I_0)$ is due to \eqref{hat S decay} and  the exponential convergence  of $\theta(z)$ at $\pm\infty$. 
\begin{align}
&\int_{B_{\delta_0}(I_0)\cap \O_0^+} |\psi_\e-1| d_{I}(x)\, dx\Big|_{t=0}\nonumber\\
&\overset{\eqref{psiint}}= \int_{B_{\delta_0}(I_0)\cap \O_0^+}\(\int_{s_\e(x)}^1 \sqrt{2f(s)}\, ds\) d_{I}(x)\, dx\Big|_{t=0}\nonumber\\
&\overset{ \eqref{hat S decay}}=\e\int_{B_{\delta_0}(I_0)\cap \O_0^+}\(\int_{\theta(\frac{d_{I}(x)}\e)}^1 \sqrt{2f(s)}\, ds\) \frac{d_{I}(x)}\e\, dx\Big|_{t=0}+O(e^{-C/\e })\leq C \e^2,
\end{align}
where the last step is due to  the exponential decay of $Q(z):=z\int_{\theta(z)}^1\sqrt{2f(s)}\, ds$  as $z\uparrow \infty$.
\end{proof}

\section{Proof of Proposition \ref{gronwallprop}}\label{appendix}

\begin{lemma}\label{lemma:expansion 1} The following identity holds:
 	\begin{align}
	&\int \nabla \HH: (\bxi  \otimes\nn_\e)\left|\nabla \psi_\e\right| \, d x
	-\int (\nabla \cdot \HH) \, \bxi   \cdot \nabla \psi_\e \, d x\nonumber\\
	=&\int \nabla \HH: (\bxi -\nn_\e ) \otimes\nn_\e\left|\nabla \psi_\e\right|\, d x+\int \HH_\e\cdot\HH |\nabla \uu_\e |\, d x \nonumber\\
	&+\int\nabla\cdot\HH  \( \frac{\e }2 |\nabla \uu_\e |^2  +\frac{1}\e  F (\uu_\e ) -|\nabla \psi_\e| \)\, d x +\int\nabla\cdot\HH ( |\nabla\psi_\e|-\bxi \cdot\nabla\psi_\e)\, d x\nonumber\\
	&-\int    (\nabla \HH)_{ij} \,\e  \(\p_i \uu_\e  \cdot \p_j \uu_\e    \)\, d x +\int \nabla \HH: (\nn_\e  \otimes\nn_\e)\left|\nabla \psi_\e\right|\, d x.\label{expansion1}
	\end{align}
\end{lemma}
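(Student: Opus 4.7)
The plan is first to observe that, after purely algebraic cancellations, \eqref{expansion1} reduces to an identity that no longer involves $\bxi$, $\nn_\e$, or $\psi_\e$. By linearity of $\nabla\HH:(\cdot)\,|\nabla\psi_\e|$ in its matrix argument, the first and last integrals on the right-hand side sum to the first integral on the left:
\[
\int \nabla\HH:(\bxi-\nn_\e)\otimes\nn_\e\,|\nabla\psi_\e|\,dx + \int \nabla\HH:(\nn_\e\otimes\nn_\e)\,|\nabla\psi_\e|\,dx = \int \nabla\HH:(\bxi\otimes\nn_\e)\,|\nabla\psi_\e|\,dx.
\]
The $\pm|\nabla\psi_\e|$ contributions in the two middle right-hand side integrals cancel, so what remains from those two integrals is $\int(\nabla\cdot\HH)\bigl(\tfrac{\e}{2}|\nabla\uu_\e|^2+\tfrac{1}{\e}F(\uu_\e)\bigr)dx - \int(\nabla\cdot\HH)\,\bxi\cdot\nabla\psi_\e\,dx$; the latter piece cancels with the second integral on the left-hand side. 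Hence \eqref{expansion1} is equivalent to the core identity
\[
0 = \int \HH_\e\cdot\HH\,|\nabla\uu_\e|\,dx + \int (\nabla\cdot\HH)\Bigl(\tfrac{\e}{2}|\nabla\uu_\e|^2 + \tfrac{1}{\e} F(\uu_\e)\Bigr)\,dx - \e\int (\nabla\HH)_{ij}\,\p_i\uu_\e\cdot\p_j\uu_\e\,dx.
\]

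To prove this core identity I would use the definition \eqref{mean curvature app}, which after a direct index computation gives
\[
\HH_\e\cdot\HH\,|\nabla\uu_\e| = -\bigl(\e\Delta\uu_\e - \tfrac{1}{\e}\p F(\uu_\e)\bigr)\cdot (\HH\cdot\nabla)\uu_\e
\]
a.e.\ in $\O$ (on the set $\{|\nabla\uu_\e|=0\}$ both sides vanish, since $\nabla\uu_\e=0$ there a.e.). For the potential contribution I would use the chain rule $\p F(\uu_\e)\cdot\p_j\uu_\e=\p_j F(\uu_\e)$ and integrate by parts once, exploiting $\HH|_{\p\O}=0$ from \eqref{bc n and H}:
\[
\tfrac{1}{\e}\int \p F(\uu_\e)\cdot(\HH\cdot\nabla)\uu_\e\,dx = \tfrac{1}{\e}\int (\HH\cdot\nabla) F(\uu_\e)\,dx = -\tfrac{1}{\e}\int (\nabla\cdot\HH)\,F(\uu_\e)\,dx.
\]
For the Laplacian piece I would integrate by parts twice: first in $x_k$ (boundary terms vanish thanks to \eqref{bc of omega}) to produce $-\int(\p_k H_j)(\p_k\uu_\e\cdot\p_j\uu_\e)\,dx$ and a leftover $-\int H_j\p_k\uu_\e\cdot\p_j\p_k\uu_\e\,dx$, and then recognize $H_j\p_k\uu_\e\cdot\p_j\p_k\uu_\e=\tfrac{1}{2}H_j\p_j|\nabla\uu_\e|^2$ and integrate by parts in $x_j$ (again using $\HH|_{\p\O}=0$) to obtain
\[
\int \Delta\uu_\e \cdot (\HH\cdot\nabla)\uu_\e\,dx = -\int (\nabla\HH)_{ij}\,\p_i\uu_\e\cdot\p_j\uu_\e\,dx + \tfrac{1}{2}\int (\nabla\cdot\HH)\,|\nabla\uu_\e|^2\,dx,
\]
where the symmetry of $\p_i\uu_\e\cdot\p_j\uu_\e$ in $i,j$ is used to rewrite $(\p_k H_j)(\p_k\uu_\e\cdot\p_j\uu_\e)$ as $(\nabla\HH)_{ij}\p_i\uu_\e\cdot\p_j\uu_\e$. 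Assembling the two displays above yields exactly the core identity.

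The main obstacle is nothing deeper than index bookkeeping and the verification that every boundary integral vanishes; the latter is automatic from \eqref{bc of omega} and \eqref{bc n and H}. The identity is thus a purely integration-by-parts consequence of the definition \eqref{mean curvature app} of $\HH_\e$, with the anisotropic $\mu\,\nabla\div\uu_\e$ term playing no role here since the Euler--Lagrange equation is not invoked.
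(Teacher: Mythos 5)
Your proof is correct and mathematically equivalent to the paper's. Your opening algebraic reduction (collapsing the $\bxi$-, $\nn_\e$-, and $\psi_\e$-terms to isolate the core identity
\[
0=\int \HH_\e\cdot\HH\,|\nabla\uu_\e|\,dx + \int (\nabla\cdot\HH)\Bigl(\tfrac{\e}{2}|\nabla\uu_\e|^2+\tfrac{1}{\e}F(\uu_\e)\Bigr)\,dx - \e\int (\nabla\HH)_{ij}\,\p_i\uu_\e\cdot\p_j\uu_\e\,dx\,\,\bigr)
\]
is exactly what the paper does in reverse order: the paper proves this core identity first and then "adds zero" to reach \eqref{expansion1}. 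For the core identity itself, the paper introduces the stress--energy tensor $(T_\e)_{ij}=\bigl(\tfrac{\e}{2}|\nabla\uu_\e|^2+\tfrac{1}{\e}F(\uu_\e)\bigr)\delta_{ij}-\e\,\p_i\uu_\e\cdot\p_j\uu_\e$, observes the pointwise identity $\nabla\cdot T_\e=\HH_\e|\nabla\uu_\e|$, and then integrates by parts once against $\HH$; your version unrolls the same computation into two explicit integrations by parts (one for the Laplacian term, one for the potential term). These are the same argument, just differently packaged. One small imprecision: when you integrate the Laplacian term by parts, the boundary integral $\int_{\p\O}\nu_k\,\p_k\uu_\e\cdot H_j\p_j\uu_\e\,d\mathcal{H}^1$ is killed by $\HH|_{\p\O}=0$ from \eqref{bc n and H}, not by $\uu_\e|_{\p\O}=0$ (which controls only tangential derivatives); you use the correct fact in the second integration by parts and in your summary, so this is a mislabelled citation rather than a gap.
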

\begin{proof}
	We
	introduce the  stress tensor $	(T_\e)_{ij}:= \( \frac{\e }2 |\nabla \uu_\e |^2 +\frac{1}{\e } F (\uu_\e )  \) \delta_{ij} - \e  \p_i \uu_\e \cdot \p_j \uu_\e.$
By \eqref{mean curvature app},
	we have the identity
	$\nabla \cdot T_\e
	   =\HH_\e |\nabla \uu_\e |.$
	Testing this identity with  $\HH$, integrating by parts and using \eqref{bc n and H}, we obtain
	\begin{equation*}
	\begin{split}
	&\int \HH_\e\cdot\HH |\nabla \uu_\e |\,d x  =- \int \nabla\HH \colon T_\e \,d x\\
	&=-  \int\nabla\cdot\HH  \( \frac{\e }2 |\nabla \uu_\e |^2 +\frac{1}{\e } F (\uu_\e )  \)\, dx+ \int  (\nabla\HH)_{ij} \, \e \(\p_i \uu_\e \cdot \p_j \uu_\e \) d x.
		\end{split}
	\end{equation*}
	So adding zero leads to
	\begin{equation*}
	\begin{split}
	&\int \nabla \HH: \nn_\e  \otimes\nn_\e\left|\nabla \psi_\e\right|d x\\
	&=\int \HH_\e\cdot\HH |\nabla \uu_\e |\, dx+\int\nabla\cdot\HH  \( \frac{\e }2 |\nabla \uu_\e |^2 +\frac{1}{\e } F (\uu_\e ) -|\nabla \psi_\e| \)\,d x+\int\nabla\cdot\HH |\nabla\psi_\e|\,d x\\
	&-\int  (\nabla\HH)_{ij}\,\e \(\p_i \uu_\e \cdot \p_j \uu_\e    \) d x+\int (\nabla \HH): (\nn_\e  \otimes\nn_\e)\left|\nabla \psi_\e\right|d x,
	\end{split}
	\end{equation*}
	which yields   \eqref{expansion1}.
\end{proof}  
 \begin{lemma}\label{lemma exact dt relative entropy}
	Under the assumptions of Theorem \ref{main thm}, the following identity holds:
\begin{subequations}\label{time deri 4}
		\begin{align}
		\frac{d}{d t} E&\left[\uu_\e   | I\right]
		+\frac 1{2\e }\int \(\e ^2 \left| \p_t \uu_\e    \right|^2-|\HH_\e |^2\)\,dx\nonumber\\
		&+\frac 1{2\e }\int \Big| \e  \p_t \uu_\e    -(\nabla \cdot \bxi )  D \dd^F    (\uu_\e  )  \Big|^2d x
		+\frac 1{2\e }\int \Big| \HH_\e -\e |\nabla \uu_\e  | \HH \Big|^2\,d x\nonumber \\
		=&\frac 1{2\e } \int \Big| (\nabla \cdot \bxi ) | D \dd^F    (\uu_\e  )|\nn_\e  +\e  |\Pi_{\uu_\e  } \nabla \uu_\e  | \HH\Big|^2\,d x\label{tail1}
		\\&+\frac \e {2} \int |\HH|^2\(|\nabla \uu_\e  |^2-|\Pi_{\uu_\e  }\nabla \uu_\e  |^2\)\,d x
		-\int \nabla \HH\cdot (\bxi -\nn_\e )^{\otimes 2}\left|\nabla \psi_\e\right|\,d x\label{tail2}\\
		&   +\int \(\nabla\cdot\HH\)  \( \frac{\e }2 |\nabla \uu_\e  |^2 +\frac{1}\e  F  (\uu_\e  ) -|\nabla \psi_\e | \)\,d x\label{tail4}\\
		&+\int\(\nabla\cdot\HH\)  \(1-\bxi \cdot \nn_\e \)|\nabla\psi_\e |\, d x+ \int (J_\e^1+J_\e^2)\, d x,\label{tail3}
		\end{align}
	\end{subequations}
	where  
	\begin{align}
	  J_\e^1
	:=&  \nabla \HH: \nn_\e  \otimes\nn_\e\(|\nabla \psi_\e |-\e  |\nabla \uu_\e  |^2\)\nonumber\\
	&+\e    \nabla \HH:(\nn_\e \otimes \nn_\e )\(
	|\nabla \uu_\e  |^2-|\Pi_{\uu_\e  } \nabla \uu_\e  |^2\)  \nonumber\\
	&-\sum_{ij}\e    (\nabla \HH)_{ij}   \Big((\p_i \uu_\e  -\Pi_{\uu_\e  } \p_i \uu_\e  )\cdot(\p_j \uu_\e  -\Pi_{\uu_\e  } \p_j \uu_\e  )\Big), \label{J1}\\
	J_\e^2:= &-  \(\p_t  \bxi +\left(\HH \cdot \nabla\right) \bxi +\left(\nabla \HH\right)^{\mathsf{T}} \bxi \)\cdot \nabla \psi_\e. \label{J2}
	\end{align}
\end{lemma}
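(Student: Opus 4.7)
The plan is to differentiate each of the three pieces of \eqref{entropy} in time and then to reorganize the output via two algebraic completions of squares together with Lemma \ref{lemma:expansion 1} and the transport identity \eqref{xi der1} for $\bxi$. Differentiating the three ``intrinsic'' terms and integrating by parts (legal thanks to the Dirichlet condition \eqref{bc of omega}) gives $-\int \p_t\uu_\e\cdot(\mu\e\nabla\div\uu_\e + \e\Delta\uu_\e - \e^{-1}\p F(\uu_\e))\,dx$, which by the PDE \eqref{Ginzburg-Landau} collapses to the dissipation $-\int \e|\p_t\uu_\e|^2\,dx$. In particular the $\mu$-piece of $E_\e$ disappears into this dissipation, which explains its absence on the RHS of \eqref{time deri 4}. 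For the modulating term, the chain rule \eqref{ADM chain rule} and an integration by parts (using $\bxi|_{\p\O}=0$ from \eqref{bc n and H}) give $-\frac{d}{dt}\int \bxi\cdot\nabla\psi_\e = -\int \p_t\bxi\cdot\nabla\psi_\e + \int (\nabla\cdot\bxi)\,\p d_F(\uu_\e)\cdot\p_t\uu_\e\,dx$.

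The two square terms on the LHS of \eqref{time deri 4} then emerge from two algebraic completions. Splitting $-\int\e|\p_t\uu_\e|^2 = -\frac{1}{2\e}\int\e^2|\p_t\uu_\e|^2 - \frac{\e}{2}\int|\p_t\uu_\e|^2$ and absorbing the latter together with the mixed term $\int(\nabla\cdot\bxi)\p d_F\cdot\p_t\uu_\e$ produces the first square $-\frac{1}{2\e}\int|\e\p_t\uu_\e - (\nabla\cdot\bxi)\p d_F|^2$ together with a residual $+\frac{1}{2\e}\int(\nabla\cdot\bxi)^2|\p d_F|^2$. Adding and subtracting $\frac{1}{2\e}\int|\HH_\e - \e|\nabla\uu_\e|\HH|^2$ yields the second square; its expansion furnishes a cross-term $-\int\HH_\e\cdot\HH|\nabla\uu_\e|$ and a residual $+\frac{\e}{2}\int|\nabla\uu_\e|^2|\HH|^2$.

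It then remains to massage $-\int \p_t\bxi\cdot\nabla\psi_\e$ and the cross-term into the stated RHS. Applying \eqref{xi der1} replaces $-\p_t\bxi$ by $(\HH\cdot\nabla)\bxi + (\nabla\HH)^T\bxi$ inside $I_t(\delta_0)$, with the outside discrepancy producing $J_\e^2$. The identity $((\nabla\HH)^T\bxi)\cdot\nabla\psi_\e = \nabla\HH:(\bxi\otimes\nn_\e)|\nabla\psi_\e|$ (by $\nabla\psi_\e = |\nabla\psi_\e|\nn_\e$ and a dummy-index swap) together with an integration by parts on $\int(\HH\cdot\nabla)\bxi\cdot\nabla\psi_\e$ (using $\HH|_{\p\O}=0$) brings the contribution into the form of the LHS of Lemma \ref{lemma:expansion 1} up to a second-derivative remainder $-\int H^k\bxi^i\p_k\p_i\psi_\e$. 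A second integration by parts on this remainder yields $\int(\nabla\cdot\bxi)\HH\cdot\nabla\psi_\e + \int((\bxi\cdot\nabla)\HH)\cdot\nabla\psi_\e$; crucially $(\bxi\cdot\nabla)\HH$ vanishes on $\mathrm{supp}(\bxi)$ --- inside $I_t(\delta_0)$ by \eqref{normal H} and outside since $\bxi$ itself vanishes there --- so only the first piece survives, and recalling $|\nabla\psi_\e| = |\p d_F||\Pi_{\uu_\e}\nabla\uu_\e|$ from \eqref{projectionnorm} this contributes exactly $(\nabla\cdot\bxi)|\p d_F||\Pi_{\uu_\e}\nabla\uu_\e|(\nn_\e\cdot\HH)$, which is the missing cross-term of the square \eqref{tail1}. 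Invoking Lemma \ref{lemma:expansion 1} then cancels the $\HH_\e\cdot\HH|\nabla\uu_\e|$ cross-term and produces the energy-stress contributions \eqref{tail4}--\eqref{tail3}, the tilt error \eqref{tail2} (after writing $(\bxi-\nn_\e)\otimes\nn_\e = (\bxi-\nn_\e)\otimes\bxi - (\bxi-\nn_\e)^{\otimes 2}$), and gradient remainders that the orthogonal splittings \eqref{projection1}--\eqref{gougudingli} and \eqref{projection} collect into $J_\e^1$. The main obstacle will be the tensorial bookkeeping --- tracking $((\nabla\HH)^T\bxi)\cdot\nabla\psi_\e$ against $\nabla\HH:(\bxi\otimes\nn_\e)|\nabla\psi_\e|$, and the Frobenius contractions $(\nabla\HH)_{ij}(\p_i\uu_\e\cdot\p_j\uu_\e)$ against the decomposition \eqref{gougudingli} --- together with the subtle cancellation $(\bxi\cdot\nabla)\HH = 0$ on $\mathrm{supp}(\bxi)$, without which the second-derivative remainder could not be absorbed cleanly.
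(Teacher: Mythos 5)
Your proposal is correct and essentially reproduces the paper's proof: differentiate $E_\e[\uu_\e|I]$ using the dissipation law and the chain rule for $\psi_\e$, add zero via the transport identity \eqref{xi der1} for $\bxi$ (producing $J_\e^2$), rewrite $\int(\HH\cdot\nabla)\bxi\cdot\nabla\psi_\e$ so as to invoke Lemma \ref{lemma:expansion 1}, and finish with the two completions of squares. The only stylistic deviation is that you observe $\int(\bxi\cdot\nabla)\HH\cdot\nabla\psi_\e\equiv 0$ directly from \eqref{normal H} and $\mathrm{supp}(\bxi)\subset I_t(\delta_0)$, whereas the paper carries this term to the final merge and absorbs it using the equivalent fact $\nabla\HH:(\bxi\otimes\bxi)=0$; both are correct. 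One small technical point you glossed over: the identification of $J_\e^1$ uses $\Pi_{\uu_\e}\p_i\uu_\e\cdot\Pi_{\uu_\e}\p_j\uu_\e = n_\e^i n_\e^j|\Pi_{\uu_\e}\nabla\uu_\e|^2$, which requires \eqref{projection} where $g(|\uu_\e|)>0$ and a separate a.e.\ argument (as in the paper) on $\{|\uu_\e|=1\}$ and $\{\uu_\e=0\}$ where $\nabla\psi_\e$ and $\Pi_{\uu_\e}\nabla\uu_\e$ vanish.
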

\begin{proof}
We shall   employ the  Einstein summation convention by summing over repeated   indices.
	Using the energy dissipation law in   \eqref{dissipation} and adding zero, we find
		\begin{align}
	&\frac{d}{d t} E_\e  [ \uu_\e   | I]
	+\e \int |\p_t \uu_\e  |^2\,d x-\int (\nabla \cdot \bxi )     {D \dd^F }   (\uu_\e  )\cdot \p_t \uu_\e   \,d x\nonumber\\
	=&\int   \left(\HH  \cdot \nabla\right) \bxi \cdot\nabla \psi_\e \,d x
	+\int \left(\nabla \HH \right)^{\mathsf{T}} \bxi  \cdot\nabla \psi_\e \,d x+ \int J_\e^2\, d x.
	 \label{time deri 1}
	\end{align}
	By the symmetry of   $\nabla^2\psi_\e $  and the boundary conditions in  \eqref{bc n and H}, we have
	\begin{align*}
	\int \nabla \cdot (\bxi  \otimes \HH ) \cdot \nabla \psi_\e  \, d x =  \int \nabla \cdot (\HH \otimes \bxi ) \cdot \nabla \psi_\e  \, d x.
	\end{align*}
	Hence, the first integral on the right-hand side  of \eqref{time deri 1} can be rewritten as
	\begin{align*}
	&\int\left(\HH  \cdot \nabla\right) \,\bxi  \cdot \nabla \psi_\e \, d x\nonumber \\
	&=\int \nabla \cdot (\bxi  \otimes \HH ) \cdot \nabla \psi_\e  \, d x
	-\int (\nabla \cdot \HH ) \,\bxi  \cdot \nabla \psi_\e \, d x\nonumber \\
	&= \int(\nabla \cdot \bxi ) \,\HH  \cdot \nabla \psi_\e \, d x
	+\int(\bxi  \cdot \nabla) \,\HH  \cdot \nabla \psi_\e \, d x
	-\int (\nabla \cdot \HH ) \,\bxi   \cdot \nabla \psi_\e \,d x.
	\end{align*}
	Therefore,
	\begin{equation*}
	\begin{split}
	&\frac{d}{d t} E_\e  [ \uu_\e   | I]
	+\e \int |\p_t \uu_\e  |^2\,d x-\int (\nabla \cdot \bxi )     {D \dd^F }   (\uu_\e  )\cdot \p_t \uu_\e  \,d x \\
	=& \int (\nabla \cdot \bxi )\, \HH  \cdot \nabla \psi_\e  d x+\int (\bxi  \cdot \nabla) \,\HH  \cdot \nabla \psi_\e \,d x
	-\int (\nabla \cdot \HH )\, \bxi   \cdot \nabla \psi_\e \,d x\\
	&   +\int \nabla \HH : \(\bxi  \otimes\nn_\e\)\left|\nabla \psi_\e\right| d x+ \int J_\e^2\, d x.
	\end{split}
	\end{equation*}
	Now using   \eqref{expansion1}   to replace the third  and the fourth  integrals on the right-hand side of the above equation, we find 	\begin{align}\label{time deri 3-}
&\frac{d}{d t} E_\e  [ \uu_\e   | I]
	+\e \int |\p_t \uu_\e  |^2\,d x-\int  (\nabla \cdot \bxi )    {D \dd^F }   (\uu_\e  )\cdot \p_t \uu_\e  \,d x
	\\   =& \int (\nabla \cdot \bxi ) \,\HH   \cdot \nabla \psi_\e\,d x
	+\int (\bxi  \cdot \nabla)\, \HH  \cdot \nabla \psi_\e\,d x
	+\int \nabla \HH : (\bxi -\nn_\e  ) \otimes\nn_\e\left|\nabla \psi_\e\right|\,d x\nonumber\\
	&   +\int \HH_\e \cdot \HH |\nabla \uu_\e  |\,d x
	+ \int \nabla\cdot\HH  \( \frac{\e }2 |\nabla \uu_\e  |^2 +\frac{1}\e  F  (\uu_\e  ) -|\nabla \psi_\e | \)\,d x\nonumber\\&
	+\int \nabla\cdot\HH \(|\nabla\psi_\e |-\bxi \cdot \nabla\psi_\e \)\,d x-\int     (\nabla \HH)_{ij} \,\e  \(\p_i \uu_\e  \cdot \p_j \uu_\e    \)\, d x\nonumber\\
	&
	+\int \nabla \HH : \nn_\e   \otimes\nn_\e\left|\nabla \psi_\e\right|\,d x   +\int J_\e^2\, dx.\nonumber
	\end{align}
We shall show  that   $J_\e^1$ arises from the second  and the third  to last integrals by proving the following identity:
\[\Pi_{\uu_\e  } \p_i \uu_\e   \cdot \Pi_{\uu_\e  } \p_j \uu_\e=n_\e^i n_\e^j |\Pi_{\uu_\e  } \nabla \uu_\e|^2 ~\text{ a.e. in }\O,\label{cancel identity}\]
where $(n_\e^\ell)_{1\leq\ell \leq 3}=\nn_\e$.
Such an identity holds obviously on the set  $\{ x\mid \uu_\e=0\}$ by \eqref{projection1}. It also holds on   $\{ x \mid g(|\uu_\e|)>0\}$ due to the following identity which follows from \eqref{projection1} and \eqref{projectionnorm}:
\begin{align*}
\Pi_{\uu_\e  } \p_i \uu_\e   \cdot \Pi_{\uu_\e  } \p_j \uu_\e |  {D \dd^F }(\uu_\e)|^2=\p_i \psi_\e ~ \p_j \psi_\e =n_\e^i ~ n_\e^j |\Pi_{\uu_\e  } \nabla \uu_\e|^2|  {D \dd^F }(\uu_\e)|^2.
\end{align*}
On  the open set $\{ x\mid |\uu_\e|>0\}$ which includes $\{ x\mid |\uu_\e|=1\}$, we deduce from   \eqref{projection1} and \eqref{ADM chain rule} that  $\Pi_{\uu_\e  } \p_j \uu_\e=(\p_j |\uu_\e|) ~\uu_\e$. By   \cite[Theorem 4.4]{MR3409135} we have  $\p_j |\uu_\e|=0$ a.e. on $\{ x\mid |\uu_\e|=1\}$.   We thus complete  the proof of  \eqref{cancel identity}.

Now by \eqref{cancel identity} and   adding zero, we find  
	\begin{equation*}
	\begin{split}
	&   \nabla \HH : \nn_\e   \otimes\nn_\e\left|\nabla \psi_\e\right|-   (\nabla \HH)_{ij} \,\e  \(\p_i \uu_\e  \cdot \p_j \uu_\e    \)\\
	\overset{\eqref{projection1}}=&  \nabla \HH : \nn_\e   \otimes\nn_\e\left|\nabla \psi_\e\right|-   \e (\nabla \HH)_{ij}(\Pi_{\uu_\e  } \p_i \uu_\e   \cdot \Pi_{\uu_\e  } \p_j \uu_\e  ) \\
	& -   (\nabla \HH)_{ij} \,\e  \Big((\p_i \uu_\e  -\Pi_{\uu_\e  } \p_i \uu_\e  )\cdot(\p_j \uu_\e  -\Pi_{\uu_\e  } \p_j \uu_\e  )\Big) \overset{\eqref{J1}}=  J_\e^1~\text{ a.e. in }\O.
	\end{split}
	\end{equation*}
 Using  the identities  $\nabla \psi_\e=\nn_\e |\nabla \psi_\e |$  and  $  \nabla \HH   :(\bxi \otimes \bxi )=0$ (due to \eqref{normal H2}), we   merge  the second  and  the third   integrals on the right-hand side  of \eqref{time deri 3-}:
	\begin{align}\label{time deri 3}
	&\frac{d}{d t} E_\e  [ \uu_\e   | I]
	=-\e \int |\p_t \uu_\e  |^2\, dx+\int  (\nabla \cdot \bxi )    {D \dd^F }   (\uu_\e  )\cdot \p_t \uu_\e  \, dx\nonumber\\
	&+ \int (\nabla\cdot \bxi ) \,\HH   \cdot \nabla \psi_\e\, dx+\int \HH_\e \cdot \HH |\nabla \uu_\e  | \, dx -\int \nabla \HH : (\bxi -\nn_\e  )^{\otimes 2}\left|\nabla \psi_\e\right|\, dx\nonumber\\
	&   +\int \(\nabla\cdot\HH\)  \Big( \frac{\e }2 |\nabla \uu_\e  |^2 +\frac{1}\e  F  (\uu_\e  ) -|\nabla \psi_\e | \Big)\, dx\nonumber\\&+\int (\nabla\cdot\HH) \(1-\bxi \cdot \nn_\e  \)|\nabla\psi_\e |\, dx+ \int (J_\e^1+J_\e^2) \, dx.
	\end{align}
 Now we complete squares for the first four terms on the right-hand side  of \eqref{time deri 3}.
	Reordering terms, we have
	\begin{align*}
	\notag-&\e  |\p_t \uu_\e  |^2+   (\nabla \cdot \bxi )   D \dd^F    (\uu_\e  )\cdot \p_t \uu_\e  
	+ (\nabla \cdot \bxi ) \HH   \cdot \nabla \psi_\e+ \HH_\e \cdot \HH |\nabla \uu_\e  |
	\\\notag&= -\frac1{2\e } \Big(  |\e  \p_t \uu_\e  |^2 -2(\nabla \cdot \bxi )   D \dd^F    (\uu_\e  )\cdot \e  \p_t \uu_\e  
	+(\nabla \cdot \bxi )^2 |  D \dd^F    (\uu_\e  )|^2 \Big)
	\\\notag&\quad - \frac1{2\e } |\e  \p_t \uu_\e  |^2 + \frac1{2\e }(\nabla \cdot \bxi )^2 |  D \dd^F    (\uu_\e  )|^2
	+ (\nabla \cdot \bxi ) \HH   \cdot \nabla \psi_\e
	\\\notag&\quad - \frac1{2\e } \Big( |\HH_\e |^2 - 2 \e  |\nabla \uu_\e  | \HH_\e \cdot  \HH + \e ^2 |\nabla \uu_\e  |^2 |\HH|^2\Big)
	+ \frac1{2\e } \Big( |\HH_\e |^2 + \e ^2 |\nabla \uu_\e  |^2 |\HH|^2\Big)
	\\&\notag =  -\frac1{2\e } \Big|\e  \p_t \uu_\e  - (\nabla \cdot \bxi )  D \dd^F    (\uu_\e  ) \Big|^2
	- \frac1{2\e } \Big|\HH_\e  - \e  |\nabla \uu_\e  | \HH \Big|^2
	- \frac1{2\e }  |\e  \p_t \uu_\e  |^2 +\frac1{2\e }  |\HH_\e |^2
	\\&\quad + \frac1{2\e } \Big( (\nabla \cdot \bxi )^2  | D \dd^F    (\uu_\e  )|^2 + 2\e (\nabla \cdot \bxi )  \nabla \psi_\e  \cdot \HH + |\e  \Pi_{\uu_\e  }\nabla \uu_\e  |^2 |\HH|^2 \Big)
	\\\notag&\quad +\frac\e {2} \left(|\nabla \uu_\e  |^2- | \Pi_{\uu_\e  }\nabla \uu_\e  |^2\right)  |\HH|^2.
	\end{align*}
	Substituting this identity   into \eqref{time deri 3}, we arrive at \eqref{time deri 4}.
	 
\end{proof}

\begin{proof}[Proof of Proposition \ref{gronwallprop}]
	The proof here  is   the same as the case $\mu=0$, done in \cite[Lemma 4.4]{MR4284534}. This is because    the form of the energy dissipation law  \eqref{dissipation} remains unchanged  in  the presence of  the divergence term in \eqref{Ginzburg-Landau}. 
	
	
	We first  estimate the right-hand side  of \eqref{time deri 4} by $E_\e  [\uu_\e   | I]$ up to a constant that only depends on $I_t$. Concerning \eqref{tail1}, it follows from the  triangle inequality that
	\begin{equation*}
	\begin{split}
	 \int& \left|\frac1{\sqrt{\e }} (\nabla \cdot \bxi ) | D \dd^F    (\uu_\e  )|\nn_\e  +\sqrt{\e } |\Pi_{\uu_\e  } \nabla \uu_\e  | \HH\right|^2d x
\\	\leq &\int \left|(\nabla\cdot \bxi )
	\left(	   \frac{1}{\sqrt{\e }} | D \dd^F    (\uu_\e  )|-\sqrt{\e } |\Pi_{\uu_\e  } \nabla \uu_\e  |
	\right)\nn_\e \right|^2\,d x\\&+\int \Big|(\nabla\cdot \bxi )\sqrt{\e }  |\Pi_{\uu_\e  } \nabla \uu_\e  |(\nn_\e -\bxi )\Big|^2\,	d x	\\&+ \int \left|
	\big((\nabla \cdot \bxi ) \bxi  +\HH \big)  \sqrt{\e } |\Pi_{\uu_\e  } \nabla \uu_\e  | \right|^2\,d x.
	\end{split}
	\end{equation*}
	The first  integral on the right-hand side  of the above inequality is controlled using  \eqref{energy bound2}.  Due to the elementary inequality  $|\bxi  - \nn_\e |^2 \leq 2 (1-\nn_\e \cdot\bxi )$,   the second integral is controlled by \eqref{energy bound1}. The third integral can be treated using  the relation $\HH=(\HH\cdot\bxi ) \bxi +O(d_I(x,t))$ and \eqref{div xi H}. So it can be  controlled by \eqref{energy bound3}.

	The integrals in \eqref{tail2} can be controlled using \eqref{energy bound2} and \eqref{energy bound1}. The one  in \eqref{tail4} is controlled by \eqref{energy bound-1}.
The first term in \eqref{tail3} can be controlled using \eqref{energy bound1}.  It remains to estimate   \eqref{J1} and \eqref{J2}.  The integrals of the  last two terms defining $J_\e^1$ can be controlled  by    \eqref{energy bound0}. Therefore,
	\begin{align*}
	\int J_\e^1\, dx
	\overset{\eqref{energy bound0}}\leq &\int \nabla \HH: \( \nn_\e  \otimes (\nn_\e -\bxi )\) \(|\nabla \psi_\e |-\e  |\nabla \uu_\e  |^2\)\, dx\nonumber\\
	&+\int (\bxi \cdot \nabla) \HH\cdot \nn_\e \(|\nabla \psi_\e |-\e  |\nabla \uu_\e  |^2\)\, dx+ C E_\e  [\uu_\e   | I]\nonumber\\
	\overset{\eqref{normal H2}}\leq  &C\Big( \int  |\nn_\e -\bxi | \Big(\e  |\nabla \uu_\e  |^2-\e  |\Pi_{\uu_\e  }\nabla \uu_\e  |^2\Big)\, dx\\
	&+\int  |\nn_\e -\bxi | \left|\e  |\Pi_{\uu_\e  }\nabla \uu_\e  |^2-|\nabla \psi_\e |\right| \, dx\nonumber\\
	&+ \int\min \(d^2_I ,1\)  \(|\nabla \psi_\e |+\e  |\nabla \uu_\e  |^2\)\, dx+   E_\e  [\uu_\e   | I]\Big).
	\end{align*}
	  The first and the third integrals in the last display can be estimated using  \eqref{energy bound0} and \eqref{energy bound3} respectively. 
	Then  we employ \eqref{projectionnorm} to find 
	\begin{align*}
	\int J_\e^1\, dx
	 \leq  &\,C \Big( \int  |\nn_\e -\bxi | \left| \e  |\Pi_{\uu_\e  }\nabla \uu_\e  |^2-|\nabla \psi_\e |\right|\, dx+   E_\e  [\uu_\e   | I]\Big)\nonumber\\
	\overset{\eqref{projectionnorm}}= &\, C\Big(  \int  |\nn_\e -\bxi | \sqrt{\e } |\Pi_{\uu_\e  }\nabla \uu_\e  | \left| \sqrt{\e } |\Pi_{\uu_\e  }\nabla \uu_\e  |-\frac{1}{\sqrt{\e }}|   D \dd^F     (\uu_\e  )|\right|\, dx+  E_\e  [\uu_\e   | I]\Big).
	\end{align*}
	Finally applying the Cauchy-Schwarz inequality and then \eqref{energy bound2} and \eqref{energy bound1}, we obtain
$\int J_\e^1 \,dx\leq C  E_\e  [\uu_\e   | I].$
	As for $J_\e^2$ \eqref{J2}, we employ \eqref{xi der1} and  \eqref{energy bound3}  to obtain  $\int J_\e^2\,dx  \leq C E_\e  [\uu_\e  | I].$
  All in all,  we have proved that the right-hand side  of \eqref{time deri 4} is bounded by $E_\e  [\uu_\e   | I]$ up to a multiplicative constant which only depends on $I_t$.
\end{proof}

\end{document}